\numberwithin{equation}{section}
\newtheorem{theorem}{Theorem}[section]
\newtheorem{lemma}{Lemma}[section]
\newtheorem{propposition}{Proposition}[section]
\newtheorem{definition}{Definition}[section]
\newtheorem{observation}{Observation}[section]
\title{ {Hierarchic Controllability for a nonlinear parabolic equation in one dimension}
\author{\sc
m. r. nu\~nez-ch\'avez\,
\thanks{Instituto de Matem\'atica e Estat\'istica,\,UFF, RJ, Brasil,\, miguelnunez@mat.uff.br} \,\,\,\,\&\,\,\,\,
j. l\'imaco\,
\thanks{Instituto de Matem\'atica e Estat\'istica,\,UFF, RJ, Brasil,\, jlimacoferrel@hotmail.com}
}
\date{}
}
\begin{document}

\maketitle

\abstract{This paper deals with the hierarchical control of the parabolic equation.We use Stackelberg–Nash strategies. As usual, we consider one leader and two followers. To each leader we associate a Nash equilibrium corresponding to a bi-objective optimal control problem, then, we look for a leader that solves null controllability e with trajectories problem. We consider linear and nonlinear equations in dimension 1.}\\

\vspace{1cm}
\textbf{Mathematics Subject Classification:} 35B37; 93C20.\\

\textbf{Keywords and phrases:} Parabolic Nonlinear PDEs, Controllability, Stackelberg-Nash, Carleman Inequalities.

\section{Introduction}\label{s1}

The development of science and technology has motivated many branches of control theory. Initially, in the classical control theory, we encountered problems where a system must reach a predetermined target by the action of a single control, for example, find a control of minimum norm such that the design specifications are met. To the extent that more realistic situations were considered, it was necessary to include several different(and even contradictory) control objectives, as well as develop theory that would handle the concepts of multi-criteria optimization, where optimal decisions need to be taken in the presence of trade-off between these different objectives.\\

There are many points of view to deal with multi-objective problems. Notions of economics and game theory were introduced in the works of J. Nash \cite{Nash}, V. Pareto \cite{Pare} and H. Von Stackelberg \cite{Stack}, where each has a particular philosophy to solve these problems. In the context of the control of PDEs, a relevant question is whether one is able to steer the system to a desired state (exactly or approximately) by applying controls that
correspond to one of these strategies. According to the formulation introduced by H. Von Stackelberg \cite{Stack}, we assume the presence of various local controls, called followers which have their own objectives, and a global control, called leader, with a different goal from the rest of the players (in the case, the followers). The general idea of this strategy is a game of hierarchical nature, where players compete against each other, so that the leader makes the fist move and then followers react optimally to the action of the leader. Since many followers
are present and each has a specific objective, it is intended that these are in Nash equilibrium.\\

The concept of hierarchic control in the context of parabolic PDEs was introduced in \cite{Lions3} when the author analyzed the approximate controllability for a system associated with a parabolic equation. There, he considered one main control (the leader) and an additional secondary controls (the followers). In \cite{Lions3} \cite{GuiGo}, the hierarchic control of a parabolic PDE and the Stokes systems have been analyzed and used to solve an approximate controllability problem.\\

In \cite{Cara1}, \cite{Cara5}, \cite{Cara2} a similar strategy has been used to deduce the exact controllability (to the trajectories) for a parabolic PDE, the problem was analyzed in \cite{Cara1}, \cite{Cara5} with distributed leader and follower controls, while \cite{Cara2} deals with distributed and boundary controls. In this paper, we will analyze a related problem for a nonlinear equation by considering a leader and two different followers. We will apply the Stackelberg–Nash rule, which combines the strategies of cooperative optimization of Stackelberg and the non-cooperative strategy of Nash.

\section{The problem and its results}\label{s2}

Let $I = (0,L) \subset \mathbb{R}$ be a bounded open interval. Let $T > 0$ be given and let us consider the cylinder $Q=I \times (0,T)$, with lateral boundary $\Sigma = \partial I \times (0,T)$. In the sequel, we will denote by $C$ a generic positive constant, sometimes, we will indicate the data on which $C$ depends by writing $C(I), C(I,T)$, etc. The usual norm and scalar product in $L^2(I)$ will be respectively denoted by $\|\cdot\|$ and $(\cdot,\cdot)$. \\

We are interested in proving the null controllability of a multi-objective parabolic PDE problem in $Q$, where we apply the Stackelberg-Nash strategy; we will assume without less of generality that only three controls are applied (one leader and two followers).\\
We will consider the following system
\begin{equation}
\label{EC1ch5}
\left\{
\begin{array}{rl}
&y_t -  (a(y_x,t,x) y_x)_x + F(y,y_x) = f 1_\mathcal{O} + v^1 1_{\mathcal{O}_1} + v^2 1_{\mathcal{O}_2} \ \text{in} \ Q,\\
&y(0,t) = y(L,t) = 0\ \ \text{ in } \ (0,T),\\
&y(0) = y_{0}\ \ \text{in } \ I.
\end{array}
\right.
\end{equation}
In system $(\ref{EC1ch5})$,\  $y$ is the state, the set $\mathcal{O} \subset I$ is the main control domain and $\mathcal{O}_1, \mathcal{O}_2 \subset I$ are the secondary control domain (it is supposed to be small); the controls are $f$, $v^1$ and $v^2$, where $f$ is the leader and $v^1,v^2$ are the followers.

We assume that 
\begin{enumerate}
	\item $a \in C^3(\mathbb{R} \times [0,T] \times \overline{I})$,
	\item There exist positive constants $a_0,a_1$ such that $$a_0 \leq a(s,t,x) \leq a_1,\ \ \forall (s,t,x) \in \mathbb{R} \times [0,T] \times \overline{I},$$
	\item There exists a positive constant $M$ such that
$$\sum_{i=1}^3|D_i a(s,t,x)|+\sum_{i,j=1}^3 |D_{ij}^2 a(s,t,x)| + \sum_{ij,k=1}^3 |D_{ijk}^3 a(s,t,x)| \leq M,\ \ \forall (s,t,x) \in \mathbb{R} \times [0,T] \times \overline{I}$$
	\item $F \in C^{2}(\mathbb{R} \times \mathbb{R})$ with bounded derivatives.
\end{enumerate}   
Let $\mathcal{O}_{1,d}, \mathcal{O}_{2,d}  \subset I$ be open sets, representing observation domains for the followers. We will consider the functional
\begin{equation}
\label{eq1ch5}
J_i(f;v^1,v^2):= \dfrac{\alpha_i}{2} \iint_{\mathcal{O}_{i,d} \times (0,T)} |y-y_{i,d}|^2 dxdt + \dfrac{\mu_i}{2} \iint_{\mathcal{O}_i \times (0,T)} |v^i|^2 dxdt,
\end{equation}
where $\alpha_i, \mu_i >0$ are constants and $y_{i,d} \in L^2(\mathcal{O}_{i,d} \times (0,T))$ are given function.\\
The control process can be described as follows:\\
\textbf{1.} The followers $v^i$ assume that the leader $f$ has made a choice and intend to be a Nash equilibrium for the costs $J_i$. Thus, once $f$ has been fixed, we look for controls $v^i \in L^2(\mathcal{O}_i \times (0,T))$ that satisfy
\begin{equation}
\label{eq2ch5}
J_1(f;v^1,v^2) = \underset{\hat{v}^1}{\text{min}}\ J_1(f;\hat{v}^1,v^2),\ \ J_2(f;v^1,v^2) = \underset{\hat{v}^2}{\text{min}}\ J_2(f;{v}^1,\hat{v}^2) .
\end{equation}
\begin{definition}
\label{def1ch5}
Any pair $(v^1,v^2)$ satisfying $(\ref{eq2ch5})$ is called a Nash equilibrium for $J_1$ and $J_2$.
\end{definition}
Note that, if the functional $J_i\ (i=1,2)$  are convex, then $(v^1,v^2)$ is a Nash equilibrium  if and only if
\begin{equation}
\label{eq3ch5}
J'_1(f;v^1,v^2)(\hat{v}^1,0) = 0,\ \ \ \forall \hat{v}^1 \in L^2(\mathcal{O}_1 \times (0,T))
\end{equation}
and
\begin{equation}
\label{eq4ch5}
J'_2(f;v^1,v^2)(0,\hat{v}^2) = 0,\ \ \ \forall \hat{v}^2 \in L^2(\mathcal{O}_2 \times (0,T)).
\end{equation}
\begin{definition}
\label{def3ch5}
Any pair $(v^1,v^2)$ satisfying $(\ref{eq3ch5})$ and $(\ref{eq4ch5})$ is called a Nash quasi-equilibrium for $J_1$ and $J_2$.
\end{definition}
\textbf{2.} Once the Nash equilibrium has been identified and fixed for each $f$, we look for a control $\hat{f} \in L^2(\mathcal{O} \times (0,T))$ subject to the restriction of null controllability
\begin{equation}
\label{eq5ch5}
y(T)= 0 \ \ \text{in}\ I.
\end{equation}

\subsection{The Main Results}\label{sa2}

Let us study the following problems.
\begin{theorem}
\label{teo1ch5}
Let us assume that 
\begin{equation}
\label{condition1ch5}
\mathcal{O}_{i,d} \cap \mathcal{O} \neq \emptyset,\ \ i=1,2
\end{equation} 
Also, suppose that one of the following two conditions hold
\begin{equation}
\label{condition2ch5}
\mathcal{O}_{1,d}=\mathcal{O}_{2,d}
\end{equation}
or
\begin{equation}
\label{eq8ch5}
\mathcal{O}_{1,d} \cap \mathcal{O} \neq \mathcal{O}_{2,d} \cap \mathcal{O}.
\end{equation}
Then, there exist $\epsilon>0$, $\mu_0>0$ only depending on $I$, $T$, $\mathcal{O}$, $\mathcal{O}_i$, $\mathcal{O}_{i,d}$ and $\alpha_i$  and a positive function $\hat{\rho} = \hat{\rho}(t)$ blowing up at $t=T$ with the following property: if $\mu_i \geq \mu_0$, the $y_{i,d}$ is such that
\begin{equation}
\label{eq12ch5}
\iint_{\mathcal{O}_{i,d} \times (0,T)} \hat{\rho}^2 |y_{i,d}|^2 dxdt < \epsilon,
\end{equation}
there exists $\delta>0$, such that for any $y_0 \in H^3(I) \cap H^1_0(I)$ with $\|y_0\|_{H^3(I)} \leq \delta$, there exist controls $f \in L^2(\mathcal{O} \times (0,T))$ and associated Nash quasi-equilibrium $(v^1,v^2)$ such that the corresponding solutions to $(\ref{EC1ch5})$ satisfy $(\ref{eq5ch5})$.
\end{theorem}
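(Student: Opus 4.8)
The plan is to follow the now-standard Stackelberg–Nash program for parabolic null controllability, treating the leader control via a fixed-point argument that reduces the nonlinear problem to a family of linear ones. First I would address the Nash equilibrium layer: for each fixed leader $f$, I must show that a Nash quasi-equilibrium $(v^1,v^2)$ exists, is unique, and depends on $f$ in a controlled way. Writing the optimality conditions $(\ref{eq3ch5})$--$(\ref{eq4ch5})$ explicitly, one introduces adjoint states $p^1,p^2$ solving backward parabolic equations with source terms $\alpha_i(y-y_{i,d})1_{\mathcal{O}_{i,d}}$, and the characterization $v^i = -\tfrac{1}{\mu_i}p^i 1_{\mathcal{O}_i}$. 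This couples the state equation $(\ref{EC1ch5})$ forward with the two adjoint equations backward into an optimality system. The largeness assumption $\mu_i \geq \mu_0$ is exactly what makes the bilinear/quadratic form associated with this system coercive, yielding existence and uniqueness of the Nash equilibrium together with an estimate $\|(v^1,v^2)\| \leq C\|f\| + (\text{data})$ uniform in $f$.

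Next I would linearize. Because $a$ and $F$ are smooth with bounded derivatives, I would freeze the coefficients at a given state $z$, replacing $a(y_x,t,x)$ by $a(z_x,t,x)$ and $F(y,y_x)$ by its linearization $B(z)y + \tilde{B}(z)y_x$ with bounded measurable coefficients $B,\tilde B$. The heart of the argument is a global Carleman inequality for the adjoint of this linearized optimality system, i.e. for a coupled system of three parabolic equations (one forward-in-reverse for the leader's adjoint $\varphi$ and the two Nash adjoints). The weight $\hat\rho(t)$ blowing up at $t=T$ comes from the Carleman weight functions, and the geometric hypotheses $(\ref{condition1ch5})$ together with $(\ref{condition2ch5})$ or $(\ref{eq8ch5})$ are precisely the conditions needed so that the observation of $\varphi$ on $\mathcal{O}$ dominates the cross-observation terms coming from $\mathcal{O}_{i,d}\cap\mathcal{O}$; this is where one handles the coupling between the two followers' observation domains. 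The smallness $(\ref{eq12ch5})$ on the weighted norm of $y_{i,d}$ guarantees the source terms fed into the adjoint system are absorbed, so that the Carleman estimate produces a genuine observability inequality.

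From the observability inequality I would derive null controllability of the linearized optimality system by the classical duality/penalization method: minimize a functional of the form $\tfrac12\iint_Q \hat\rho^{-2}|\varphi|^2 + \tfrac12\iint \dots$ over solutions of the adjoint system, whose unique minimizer yields a leader control $f$ driving $y(T)=0$, together with the associated Nash quasi-equilibrium, and a uniform bound of the form $\|\hat\rho\, y\|_{L^2} + \|f\| + \sum_i \|v^i\| \leq C\big(\|\hat\rho\, y_{i,d}\| + \|y_0\|\big)$ in suitable weighted spaces. Crucially this bound must be linear in the data with a constant independent of the frozen state $z$, so that the solution inherits the weighted integrability needed to close the loop.

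Finally I would set up the fixed-point map $z \mapsto y$, where $y$ is the controlled state of the linearized problem with coefficients frozen at $z$. On a closed ball of small radius in an appropriate weighted space (continuously embedded in a space giving compactness of $z_x$, which in dimension one is comfortable via the $H^1$-in-space, weighted-in-time framework), the smallness of $y_0$ (governed by $\delta$) and of $y_{i,d}$ keeps the image inside the ball, and the Lipschitz character of $a$ and $F$ plus compactness give a fixed point by Schauder. The main obstacle I anticipate is the coupled Carleman estimate for the three-equation adjoint system under the alternative geometric conditions $(\ref{condition2ch5})$/$(\ref{eq8ch5})$: controlling the follower-adjoint contributions and absorbing them into the leader observation requires carefully chosen weights and the $\mu_i\geq\mu_0$ threshold, and it is there that the dimension-one restriction and the regularity $y_0\in H^3(I)\cap H^1_0(I)$ are used to obtain the extra regularity that makes the freezing of the quasilinear diffusion $a(y_x,t,x)$ legitimate.
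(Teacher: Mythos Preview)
Your outline is plausible but takes a genuinely different route from the paper. The paper does \emph{not} use a Schauder fixed-point argument with frozen coefficients. Instead, after deriving the optimality system and proving the Carleman/observability estimates for its adjoint (your first two layers, which match), the paper linearizes \emph{once} at zero, proves null controllability of the linearized optimality system $(\ref{eq18ch5})$ via a Lax--Milgram construction (Proposition~\ref{prop3ch5}), and then handles the full nonlinear problem by Liusternik's local right inverse function theorem (Theorem~\ref{t2prelim}). Concretely, they build weighted Hilbert spaces $Y$ and $Z$ encoding high regularity (this is where the $H^3$ hypothesis on $y_0$ and the additional estimates of Propositions~\ref{prop5ch5}--\ref{prop6ch5} enter), define $\mathcal{A}:Y\to Z$ sending $(y,p^1,p^2,f)$ to the left-hand sides of the optimality system together with the initial trace, and verify that $\mathcal{A}$ is $C^1$ with $\mathcal{A}'(0,0,0,0)$ onto (Lemmas~\ref{lem1ch5}--\ref{lem3ch5}). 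The smallness of $y_{i,d}$ and $y_0$ is then simply the requirement that the target $(0,-\alpha_1 y_{1,d}1_{\mathcal{O}_{1,d}},-\alpha_2 y_{2,d}1_{\mathcal{O}_{2,d}},y_0)$ lie in the small ball where the local right inverse exists; it is not used to absorb adjoint source terms in the Carleman step as you suggested.

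The trade-off: your freezing/Schauder scheme avoids the lengthy $C^1$ verification (Lemma~\ref{lem2ch5}) but forces you to prove an observability inequality with constants uniform over the whole ball of frozen states $z$, since the principal coefficient $a(z_x,t,x)$ now varies; the paper sidesteps this entirely by needing only a single Carleman estimate with fixed coefficient $a(0,t,x)$. Your approach also needs a canonical (e.g.\ minimal-norm) selection of the leader control so that the map $z\mapsto y$ is single-valued and continuous, which is additional work. Both routes are viable in principle; the inverse-function-theorem route is the one actually carried out here, and the authors themselves flag the fixed-point alternative as a possible direction for the higher-dimensional case in Section~\ref{s7}.
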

A natural question is whether there are semilinear systems for which the concepts of Nash equilibrium and Nash quasi-equilibrium are equivalent. An answer is given by the following result:
\begin{theorem}
\label{teo2ch5}
Let us assume that $y_{i,d} \in L^\infty(\mathcal{O}_{i,d} \times (0,T))$. Suppose that $y_0 \in H^3(I) \cap H^1_0(I)$\ with\ $\|y_0\|_{H^3(I)}~\leq~\delta$. Then, there exists $C>0$ such that, if $f \in L^2(\mathcal{O} \times (0,T))$ and $\mu_i$ satisfies
\begin{equation*}
\mu_i \geq C(1+\|y^0\|_{H_0^1(I)}+\|f\|_{L^2(\mathcal{O} \times (0,T))}),
\end{equation*}
the pair $(v^1,v^2)$ is a Nash equilibrium for $J_i$ of $(\ref{EC1ch5})$.
\end{theorem}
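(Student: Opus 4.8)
The plan is to show that, once $\mu_i$ exceeds the announced threshold, each cost $J_i(f;\,\cdot\,)$ is convex in its own control with the leader and the other follower frozen. Granting this, the quasi-equilibrium identities \eqref{eq3ch5}--\eqref{eq4ch5} say exactly that $v^1$ and $v^2$ are critical points of the convex maps $v^1\mapsto J_1(f;v^1,v^2)$ and $v^2\mapsto J_2(f;v^1,v^2)$, so they are global minima and \eqref{eq2ch5} holds; that is, $(v^1,v^2)$ is a Nash equilibrium. Thus the theorem reduces to a single convexity estimate, which I would carry out for $J_1$ (the case of $J_2$ being identical).

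First I would check that $v^1\mapsto y$ is twice G\^ateaux differentiable and compute, for a direction $\hat v^1$,
\[
\frac{\partial^2 J_1}{\partial(v^1)^2}(\hat v^1,\hat v^1)=\alpha_1\iint_{\mathcal{O}_{1,d}\times(0,T)}|z|^2\,dx\,dt+\alpha_1\iint_{\mathcal{O}_{1,d}\times(0,T)}(y-y_{1,d})\,w\,dx\,dt+\mu_1\iint_{\mathcal{O}_1\times(0,T)}|\hat v^1|^2\,dx\,dt,
\]
where $z$ is the first variation of the state (solving the linearized version of \eqref{EC1ch5} with source $\hat v^1 1_{\mathcal{O}_1}$) and $w$ is the second variation (solving the same linear operator with a source quadratic in $(z,z_x)$ whose coefficients are built from $D^2a$, $D^2F$ and $y_x$). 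The first and third terms are nonnegative, so the whole matter is to dominate the crossed term by $\mu_1\|\hat v^1\|^2$.

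Next I would run energy estimates on the two linear parabolic problems. The smallness $\|y_0\|_{H^3(I)}\le\delta$ keeps $\|y_x\|_{L^\infty}$ small, which both guarantees uniform parabolicity of the linearized principal part $a(y_x,t,x)+y_xD_1a(y_x,t,x)$ and bounds its coefficients; together with the hypotheses on $a$ and $F$ this yields $\|z\|\le C\|\hat v^1\|$ and then $\|w\|\le C\|z\|^2\le C\|\hat v^1\|^2$, the quadratic source being absorbed through the one-dimensional embedding $H^1(I)\hookrightarrow L^\infty(I)$. Since $y_{1,d}\in L^\infty$ and the a priori estimate for \eqref{EC1ch5} controls $\|y\|$, the crossed term is bounded by $\alpha_1C\bigl(1+\|y_0\|_{H_0^1(I)}+\|f\|\bigr)\|\hat v^1\|^2$, so the second derivative is at least $\bigl(\mu_1-\alpha_1C(1+\|y_0\|_{H_0^1(I)}+\|f\|)\bigr)\|\hat v^1\|^2\ge0$ once $\mu_1$ is large enough.

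The point that needs care is precisely this last bound on $\|y-y_{1,d}\|$: the natural energy estimate controls $\|y\|$ by $\|y_0\|+\|f\|+\|v^1\|+\|v^2\|$, so a careless argument would make the threshold depend on the unknown followers. I would close the loop by testing convexity only against competitors that do not increase $J_1$ --- for those $\tfrac{\mu_1}{2}\|\hat v^1\|^2\le J_1(f;v^1,v^2)$ bounds $\|\hat v^1\|$ --- and by using the bound $\|v^i\|\le C\mu_i^{-1}(\cdots)$ read off from \eqref{eq3ch5}--\eqref{eq4ch5}; this confines the whole segment joining $v^1$ to any relevant competitor to a fixed ball, on which $\|y\|$, and hence the threshold on $\mu_i$, depends only on $\|y_0\|$ and $\|f\|$. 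Making this self-consistent choice of constants rigorous, and carrying the second-variation estimate in norms strong enough to swallow the quadratic source, is the main obstacle.
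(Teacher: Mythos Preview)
Your plan is correct and follows the same overall strategy as the paper: compute the second derivative $D_1^2J_1$ at the quasi-equilibrium, show that the non-quadratic part is bounded by $C(1+\|y_0\|+\|f\|)\|\hat v^1\|^2$, and conclude positivity once $\mu_1$ exceeds this constant.

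The one genuine technical difference is how the second derivative is represented. You work on the \emph{primal} side: you keep the second variation $w$ of the state and write $D_1^2J_1(\hat v^1,\hat v^1)=\alpha_1\iint_{\mathcal{O}_{1,d}}|z|^2+\alpha_1\iint_{\mathcal{O}_{1,d}}(y-y_{1,d})w+\mu_1\iint_{\mathcal{O}_1}|\hat v^1|^2$, then estimate $\|w\|$ by energy methods on a parabolic problem with a source quadratic in $(z,z_x)$. The paper instead works on the \emph{dual} side: it introduces the adjoint $\phi$ solving the backward problem with source $\alpha_1(y-y_{1,d})1_{\mathcal{O}_{1,d}}$, differentiates it to get $\eta=\partial_s\phi^s|_{s=0}$, and writes $D_1^2J_1(w^1,w^1)=\iint_{\mathcal{O}_1}\eta\,w^1+\mu_1\iint_{\mathcal{O}_1}|w^1|^2$; the crossed term is then expressed, after integration by parts against the equation for $h=z$, purely in terms of $h,h_x,h_{xx},\phi_x,y_x$. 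The paper's route trades the second variation $w$ (whose source is awkward) for the adjoint $\phi$ (which solves a clean linear problem), at the price of an extra layer of duality; your route is more transparent structurally and isolates the manifestly nonnegative term $\alpha_1\iint|z|^2$, which the paper simply lumps into the estimate. Both close with the same one-dimensional embeddings and parabolic energy bounds.

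On the circularity you flag in your last paragraph: the paper handles it exactly by the second of your two devices, namely reading $v^i=-\mu_i^{-1}p^i|_{\mathcal{O}_i}$ from the quasi-equilibrium characterization and combining the energy estimates for $y$ and $p^i$ so that, for $\mu_i$ above an absolute constant, the $\mu_i^{-1}\|p^i\|$ contribution is absorbed and $\|y\|$ is controlled by $\|y_0\|,\|f\|,\|y_{i,d}\|$ alone. Your first device (restricting to competitors with $J_1\le J_1(f;v^1,v^2)$, hence bounded in norm) is what is actually needed to pass from nonnegativity of $D_1^2J_1$ \emph{at} $(v^1,v^2)$ to the global minimum statement \eqref{eq2ch5}; the paper glosses over this step, so your remark is a genuine improvement in rigor rather than an obstacle.
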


\textbf{Outline of the paper.} The content of this article are organized as follows. 

%In Section 2, we establish the main concepts and results of this paper. 

In Section \ref{s3}  applying the Nash-Stackelberg strategies, this is, using the adjoint system to characterize the followers control, we will find the (nonlinear and linear) Optimality System.

In Section \ref{s4} we analyze the Nash Quasi-equilibrium for the linear optimality system, thus,  we prove a null controllability result using a standard technique based on an observability estimate. This will be deduced using Carleman inequalities.

In Section \ref{s5}, we analyze the nonlinear case in Theorem \ref{teo1ch5}, for this, we prove a controllability property by using Right Inverse Function Theorem for Banach Spaces techniques. And finally we prove Theorem \ref{teo2ch5}, this is, for $\mu_1$, $\mu_2$ large enough the Nash quasi-equilibrium is a Nash equilibrium. 
 
In Section \ref{s6} we briefly address the case Hierarchical controllability with trajectories, the strategy is very similar to the previous case. 

And in Section \ref{s7} we address additional comments and open questions.

\section{Characterization of Nash Quasi-Equilibrium}\label{s3}

Note that the convexity of the functional $J_i$ are not guaranteed. For this reason, we must re-define the concept of Nash optimally $($recall Def. $\ref{def3ch5})$.\\
It is clear that $(\ref{eq3ch5})$-$(\ref{eq4ch5})$ is equivalent to
\begin{equation}
\label{eq14ch5}
\left\{
\begin{array}{rl}
&\displaystyle \alpha_i \iint_{\mathcal{O}_{i,d} \times (0,T)} (y-y_{i,d}) \hat{y}^i\ dxdt + \mu_i \iint_{\mathcal{O}_i \times (0,T)} v^i \hat{v}^i\ dxdt = 0,\\
&\forall \hat{v}^i \in L^2(\mathcal{O}_i \times (0,T)), \ \  {v}^i \in L^2(\mathcal{O}_i \times (0,T)),\ \ i=1,2,
\end{array}
\right.
\end{equation}
where we have denoted by $\hat{y}^i$ the derivative of the state $y$ with respect to $v^i$ in the direction $\hat{v}^i$. One has
\begin{equation}
\label{EC3ch5}
\left\{
\begin{array}{rl}
&\hat{y}^i_t -  ((D_1 a(y_x,t,x)y_x + a(y_x,t,x))\hat{y^i}_x)_{x} + D_1 F(y,y_x) \hat{y}^i + D_2 F(y,y_x) \hat{y}^i_x = \hat{v}^i 1_{\mathcal{O}} \ \text{in} \ Q,\\
&\hat{y}^i(0,t) =\hat{y}^i(L,t) = 0\ \ \text{ in } \  (0,T),\\
&\hat{y}^i(0) = 0\ \ \text{ in } \ I.
\end{array}
\right.
\end{equation}
Let us introduce the adjoint systems for $(\ref{EC3ch5})$
\begin{equation}
\label{EC4ch5}
\left\{
\begin{array}{rl}
&-p^i_t - ((D_1 a(y_x,t,x)y_x + a(y_x,t,x)) p^i_x)_{x} + D_1 F(y,y_x) p^i - (D_2 F(y,y_x) p^i)_x = \alpha_i (y-y_{i,d}) {1}_{\mathcal{O}_{i,d}} \ \text{in} \ Q,\\
&p^i(0,t) = p^i(L,t) = 0\ \ \text{ in } \ (0,T),\\
&p^i(T) = 0\ \ \text{in } \ I.
\end{array}
\right.
\end{equation}
If we multiply $(\ref{EC4ch5})_1$ by $\hat{y}^i$ in $L^2(Q)$, and perform integration by parts, we obtain
\begin{equation*}
\alpha_i \iint_Q (y-y_{i,d}) 1_{\mathcal{O}_{i,d}} \hat{y}^i\ dxdt = \iint_Q p^i \hat{v}^i\ 1_{\mathcal{O}_i}\ dxdt.
\end{equation*}
Replacing the above expression in $(\ref{eq14ch5})$, we have
\begin{equation*}
\iint_Q p^i \hat{v}^i 1_{\mathcal{O}_i}\ dxdt + \mu_i \iint_{{\mathcal{O}_i} \times (0,T)} v^i \hat{v}^i\ dxdt = 0.
\end{equation*}
As a consequence, we get the following characterization of any Nash quasi-equilibrium for $J_i$
\begin{equation}
\label{eq15}
v^i = - \dfrac{1}{\mu_i} p^i 1_{\mathcal{O}_i}.
\end{equation}
In this way, we have the following optimality system for $(\ref{EC1ch5})$
\begin{equation}
\label{EC7ch5}
\left\{
\begin{array}{rl}
&y_t -  (a(y_x,t,x) y_x)_x + F(y,y_x) = f 1_\mathcal{O} - \dfrac{1}{\mu_1} p^1 1_{\mathcal{O}_1} - \dfrac{1}{\mu_2} p^2 1_{\mathcal{O}_2} \ \text{in} \ Q,\\
&-p^i_t - ((D_1 a(y_x,t,x)y_x + a(y_x,t,x)) p^i_x)_{x} + D_1 F(y,y_x) p^i - (D_2 F(y,y_x) p^i)_x = \alpha_i (y-y_{i,d}) 1_{\mathcal{O}_{i,d}} \ \text{in} \ Q,\\
&y(0,t) = y(L,t) = 0, \ \  p^i(0,t)=p^i(L,t)=0\ \ \text{ in } \ (0,T),\\
&y(0) = y_{0}, \ \  p^i(T)=0\ \ \text{in } \ I.
\end{array}
\right.
\end{equation}
We consider the linearized system for $(\ref{EC7ch5})$
\begin{equation}
\label{eq18ch5}
\left\{
\begin{array}{rl}
&y_t - (a(0,t,x) y_{x})_x + D_1 F(0,0)y + D_2F(0,0)y_x =  f 1_\mathcal{O} - \dfrac{1}{\mu_1} p^1 1_{\mathcal{O}_1} - \dfrac{1}{\mu_2} p^2 1_{\mathcal{O}_2} + G\ \text{in} \ Q,\\
&-p^i_t - (a(0,t,x) p^i_{x})_x + D_1 F(0,0) p^i - D_2F(0,0) p^i_x = \alpha_i y 1_{\mathcal{O}_{i,d}} + G_i\ \text{in} \ Q,\\
&y(0,t) = y(L,t) = 0, \ \  p^i(0,t)=p^i(L,t)=0\ \ \text{ in } \ (0,T),\\
&y(0) = y_{0},\ \ p^i(T)=0, \ \ \text{ in } \ I.
\end{array}
\right.
\end{equation}
Now, we consider the adjoint system for $(\ref{eq18ch5})$
\begin{equation}
\label{eq20ch5}
\left\{
\begin{array}{rl}
&-\varphi_t -  (a(0,t,x) \varphi_{x})_x + D_1 F(0,0) \varphi - D_2F(0,0) \varphi_x=  \alpha_1 \theta^1 1_{\mathcal{O}_{1,d}} + \alpha_2 \theta^2 1_{\mathcal{O}_{2,d}} + \mathcal{G} \ \text{in} \ Q,\\
&\gamma^i_t -  (a(0,t,x) \gamma^i_{x})_x + D_1 F(0,0) \gamma^i + D_2 F(0,0) \gamma^i_x =  - \dfrac{1}{\mu_i} \varphi 1_{\mathcal{O}_i} + \mathcal{G}_i\ \text{in} \ Q,\\
&\varphi(0,t) = \varphi(L,t) = 0, \ \ \gamma^i(0,t)=\gamma^i(L,t) =0, \ \ \text{ in } \ (0,T),\\
&\varphi(T) = \varphi^{T},\ \ \gamma^i(0)=0, \ \ \text{ in } \ I.
\end{array}
\right.
\end{equation}

%\section{Nash Quasi-Equilibrium for $(\ref{EC1ch5})$}\label{NQE}
%In this section we prove Theorem $\ref{teo1ch5}$. The proof is divided in four steps: first, we perform a change of variable that reduces the task to solve a null controllability problem; second we will make the characterization of Nash quasi-equilibrium for $(\ref{EC1ch5})$; third we will study the exact controllability to trajectories of linearized system for this characterization using Carleman inequalities; finally in the fourth step, we will conclude the proof using Inverse Function Theorem for Banach spaces. 

\section{Null Controllability for Linearized System (\ref{eq18ch5})}\label{s4}

Note that to prove the existence and uniqueness of a Nash Quasi-Equilibrium for the linearized system of (\ref{EC1ch5}) is equivalent to prove the null controllability to the linear optimality system.\\
For this purpose, we apply the Carleman techniques in the adjoint system (\ref{eq20ch5}), in this way, we will need to define weight functions.\\
Let us consider a non-empty open set $\tilde{\mathcal{O}} \subset \subset \mathcal{O}$ such that $\mathcal{O}_{i,d} \cap \tilde{\mathcal{O}} \neq \emptyset$ for $i=1,2$.\\
If $(\ref{condition1ch5})$ is satisfied, we can define $\mathcal{O}_d := \mathcal{O}_{1,d} = \mathcal{O}_{2,d}$ and we introduce the non-empty open set $\omega_0$ satisfying $\omega_0 \subset \subset \mathcal{O}_{d} \cap \tilde{\mathcal{O}}$.
\begin{lemma}[Fursikov's Lemma]
\label{l1prelim}
There exists a function $\eta_0 = \eta_0(x) \in C^2(\overline{I})$ satisfying 
\begin{equation*}
\left\{
\begin{array}{rl}
&\eta_0 >0,\ \text{in}\ I,\ \ \eta_i =0\ \text{on}\ \partial I, \\
&|\eta_{0,x}|> 0\ \text{in}\ \ \overline{I}
 \backslash \omega_0. 
\end{array}
\right.
\end{equation*}
\end{lemma}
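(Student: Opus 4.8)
The plan is to give an explicit construction, taking advantage of the fact that the spatial domain is one-dimensional. Since $\omega_0$ is a nonempty open subset of $I=(0,L)$ with $\omega_0 \subset\subset I$, it contains an open interval $(a,b)$ with $0 < a < b < L$; it therefore suffices to produce $\eta_0 \in C^2(\overline I)$ that vanishes at the endpoints, is strictly positive inside, and whose only critical points lie in $(a,b)$. Because $\eta_0(0)=\eta_0(L)=0$ while $\eta_0>0$ in between, Rolle's theorem forces at least one interior critical point, so the real content of the statement is that one can make the critical point fall inside $\omega_0$ while keeping $\eta_{0,x}$ from vanishing on the two complementary closed pieces $[0,a]$ and $[b,L]$.

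First I would fix the model profile $g(y) = y(L-y)$ on $[0,L]$, which is $C^\infty$, vanishes at $0$ and $L$, is positive on $(0,L)$, and has a single nondegenerate critical point at $y=L/2$ (indeed $g'(y)=L-2y$). Then I would pre-compose with an increasing $C^2$ diffeomorphism $\phi\colon[0,L]\to[0,L]$ fixing the endpoints, $\phi(0)=0$, $\phi(L)=L$, $\phi'>0$, and set $\eta_0(x):=g(\phi(x))=\phi(x)\,(L-\phi(x))$. Regularity, boundary vanishing and interior positivity are immediate from $\phi(x)\in(0,L)$ for $x\in(0,L)$. The key computation is $\eta_{0,x}(x)=\phi'(x)\,(L-2\phi(x))$, so that, since $\phi'>0$, the equation $\eta_{0,x}(x)=0$ is equivalent to $\phi(x)=L/2$, i.e.\ to $x=\phi^{-1}(L/2)$; thus $\eta_0$ has exactly one critical point, located wherever the diffeomorphism sends $L/2$ back.

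It then remains only to choose $\phi$ so that $\phi^{-1}(L/2)\in(a,b)$, equivalently $L/2\in(\phi(a),\phi(b))$. This is elementary: pick any strictly increasing $C^2$ function on $[0,L]$ that fixes $0$ and $L$, agrees with the identity near the endpoints, and carries $a$ to a value below $L/2$ and $b$ to a value above $L/2$. With this choice the unique critical point $\phi^{-1}(L/2)$ lies in $(a,b)\subseteq\omega_0$, so $\eta_{0,x}$ does not vanish anywhere on $\overline I\setminus\omega_0$; in particular $\eta_{0,x}(0)=\phi'(0)\,L>0$ and $\eta_{0,x}(L)=-\phi'(L)\,L<0$, confirming that the derivative is nonzero even at the boundary points, as required.

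I do not expect a genuine obstacle here: in one dimension there is a single critical point to relocate, and relocation is achieved by a monotone reparametrization of the interval. This is the elementary one-dimensional incarnation of the general Fursikov–Imanuvilov construction; the real difficulty of that result in several space dimensions — producing a Morse function with nondegenerate critical points and then pushing all of them into $\omega_0$ by an isotopy that is the identity near $\partial\Omega$ — disappears in the present setting, and the only point deserving any care is the non-vanishing of $\eta_{0,x}$ at the two endpoints, which the formula above settles directly.
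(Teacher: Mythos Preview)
Your construction is correct: the composition $\eta_0(x)=\phi(x)(L-\phi(x))$ with a $C^2$ increasing diffeomorphism $\phi$ of $[0,L]$ chosen so that $\phi^{-1}(L/2)\in(a,b)\subset\omega_0$ gives exactly what is required, and your check of the endpoint derivatives closes the argument.

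As for comparison with the paper: there is nothing to compare. The paper does not prove this lemma at all; its entire proof reads ``See \cite{Furs}'', i.e.\ it defers to the general Fursikov--Imanuvilov construction valid in any space dimension. Your argument is a genuinely different (and more elementary) route tailored to the one-dimensional setting: instead of invoking Morse theory and isotopies to push critical points into $\omega_0$, you exploit that a positive function on an interval vanishing at the endpoints can be arranged to have a single critical point, which a monotone reparametrization moves freely. What you gain is a short, self-contained proof; what the citation buys is the $n$-dimensional result, which the present paper does not need.
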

\begin{proof}
See \cite{Furs}.
\end{proof}
If $(\ref{condition2ch5})$ is satisfied, we introduce the non-empty connected open sets $\omega_i$ with
\begin{equation}
\label{condition3ch5}
\omega_i \subset \subset \mathcal{O}_{i,d} \cap \tilde{\mathcal{O}},\ i=1,2\ \ \omega_1 \cap \omega_2 \neq \emptyset.
\end{equation}
such that
\begin{lemma}
\label{lemfursikov2ch5} 
There exists functions $\eta_i=\eta_i(x) \in C^2(\overline{I})$ $(i=1,2)$ satisfying
\begin{equation*}
\left\{
\begin{array}{rl}
&\eta_i >0,\ \text{in}\ I,\ \ \eta_i =0\ \text{on}\ \partial I, \\
&|\eta_{i,x}|> 0\ \text{in}\ \ \overline{I}
 \backslash \omega_i,\ \ \eta_1=\eta_2 \ \text{in}\ I\backslash \tilde{\mathcal{O}}. 
\end{array}
\right.
\end{equation*}
\end{lemma}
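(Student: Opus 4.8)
The plan is to reduce Lemma~\ref{lemfursikov2ch5} to the single-weight construction already granted by Fursikov's Lemma~\ref{l1prelim}, exploiting the overlap hypothesis in (\ref{condition3ch5}), and only afterwards (if one genuinely wants weights that differ inside $\tilde{\mathcal{O}}$) to perform a localized surgery on one of them.

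First I would use that $\omega_1\cap\omega_2$ is open and non-empty to fix a non-empty open set $\omega_0$ with $\omega_0\subset\subset\omega_1\cap\omega_2$. Applying the Fursikov construction of Lemma~\ref{l1prelim} to this $\omega_0$ yields a function $\eta_0\in C^2(\overline I)$ with $\eta_0>0$ in $I$, $\eta_0=0$ on $\partial I$, and $|\eta_{0,x}|>0$ on $\overline I\setminus\omega_0$. Since $\omega_0\subset\omega_1$ and $\omega_0\subset\omega_2$, we have $\overline I\setminus\omega_i\subset\overline I\setminus\omega_0$, so this single $\eta_0$ already satisfies $|\eta_{i,x}|>0$ on $\overline I\setminus\omega_i$ for both $i$. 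Setting $\eta_1=\eta_2:=\eta_0$ then gives $\eta_1=\eta_2$ on all of $I$, a fortiori on $I\setminus\tilde{\mathcal{O}}$, and all three required properties hold simultaneously. This is already a complete proof: the degeneracy set of the common weight is simply shrunk into $\omega_1\cap\omega_2$, which lies inside each $\omega_i$.

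If one insists on weights that visibly differ inside $\tilde{\mathcal{O}}$ (adapted separately to $\mathcal{O}_{1,d}$ and $\mathcal{O}_{2,d}$), I would build them by localized modification. Let $J_0=(\alpha,\beta)$ be the connected component of $\tilde{\mathcal{O}}$ containing the set $\omega_1\cup\omega_2$, which is connected because the $\omega_i$ are connected and overlap. I would first construct a common single-hump profile $\eta_*\in C^2(\overline I)$ vanishing at $0,L$, positive and strictly increasing on $[0,\alpha]$, strictly decreasing on $[\beta,L]$, monotone (hence critical-point-free) on $\tilde{\mathcal{O}}\setminus J_0$, with its unique maximum in $\omega_1\cap\omega_2$. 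Taking $\eta_2:=\eta_*$ works outright, since its only critical point lies in $\omega_1\cap\omega_2\subset\omega_2$. For $\eta_1$ I would, on a closed interval $K\subset J_0$ containing the hump and a target point $p_1\in\omega_1$, deform $\eta_*$ so that its single maximum is relocated to $p_1$, keeping $\eta_{1,x}>0$ to the left of $p_1$ and $\eta_{1,x}<0$ to its right, and matching $\eta_*$ together with its first and second derivatives at $\partial K$. Defining $\eta_1=\eta_*$ off $K\subset\tilde{\mathcal{O}}$ then yields $\eta_1=\eta_2$ outside $\tilde{\mathcal{O}}$, while $|\eta_{1,x}|>0$ off $\omega_1$ and the positivity/boundary conditions are inherited from $\eta_*$.

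The only delicate step is this last deformation: I must ensure that relocating the maximum introduces no spurious zero of $\eta_{1,x}$ inside $J_0\setminus\omega_1$, i.e. that $\eta_1$ remains strictly monotone on each side of its peak while still meeting the prescribed $C^2$ endpoint data of $\eta_*$ at $\partial K$. This is an elementary but bookkeeping-heavy one-dimensional interpolation — the endpoint constraints are finitely many numbers and the interior offers infinitely many degrees of freedom, so it is always solvable. The slick first approach avoids this obstacle altogether, so I would present it as the actual argument and treat the explicit two-function surgery as an optional remark showing that the coincidence condition $\eta_1=\eta_2$ on $I\setminus\tilde{\mathcal{O}}$ leaves room for genuinely distinct weights.
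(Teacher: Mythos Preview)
Your first argument is correct and complete for the lemma exactly as stated: with $\omega_1\cap\omega_2\neq\emptyset$ granted by (\ref{condition3ch5}), shrinking to $\omega_0\subset\subset\omega_1\cap\omega_2$ and invoking Lemma~\ref{l1prelim} gives a single $\eta_0$ whose critical set already lies in each $\omega_i$, so $\eta_1=\eta_2:=\eta_0$ works. The paper gives no self-contained proof here---it simply defers to \cite{Cara5}---so there is no in-text argument to compare against; your reduction to the one-weight Fursikov lemma is more elementary than anything one would expect from the cited construction.

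One caution is worth recording. The Observation immediately following the lemma stresses that $\eta_1,\eta_2$ ``may be very different inside $\tilde{\mathcal{O}}$'' with $\|\eta_1\|_\infty=\|\eta_2\|_\infty$, and the downstream hypotheses (\ref{condition4ch5})--(\ref{condition5ch5}) actually force $\omega_1\cap\omega_2=\emptyset$ (since $\omega_i\subset\mathcal{O}_{i,d}$, either condition gives $\omega_1\cap\omega_2\subset\omega_j\cap\mathcal{O}_{i,d}=\emptyset$). This strongly suggests the overlap in (\ref{condition3ch5}) is a misprint and that the result imported from \cite{Cara5} is meant to handle disjoint $\omega_i$. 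In that regime your first approach is unavailable, and your surgery sketch also needs adjustment, since it relies on $\omega_1\cup\omega_2$ being connected through the overlap. The fix is minor: build a common single-hump profile $\eta_*$ with its critical point anywhere inside $\tilde{\mathcal{O}}$, then for each $i$ relocate the hump by a $C^2$ modification compactly supported in $\tilde{\mathcal{O}}$ so that the unique critical point sits in $\omega_i$; the two modifications need not interact. So as a proof of the printed statement you are done, and your second paragraph contains the right idea for the intended one once the connectedness crutch is dropped.
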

\begin{proof}
See \cite{Cara5}.
\end{proof}
\begin{observation}
Lemma $\ref{lemfursikov2ch5}$ establishes the existence of functions $\eta_1$ and $\eta_2$ which coincide outside $\tilde{\mathcal{O}}$ but may be very different inside $\tilde{\mathcal{O}}$. Nevertheless, it will be seen in the proof that one can find $\eta_1$ and $\eta_2$ satisfying $\|\eta_1\|_\infty=\|\eta_2\|_\infty$.  
\end{observation}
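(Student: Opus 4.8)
The plan is to give an explicit one-dimensional construction, using the fact that in a single space variable a Fursikov weight is just a smooth ``single bump'' whose only critical point is its maximum, and that this maximum may be placed at any prescribed interior location. Since $\omega_1\cap\omega_2\neq\emptyset$, the set $\omega_1\cup\omega_2$ is connected; as we are free to choose $\tilde{\mathcal{O}}$, we may take it to be an interval $\tilde{\mathcal{O}}=(\alpha,\beta)$ with $0<\alpha<\beta<L$ and $\overline{\omega_1\cup\omega_2}\subset(\alpha,\beta)$, so that $I\setminus\tilde{\mathcal{O}}=[0,\alpha]\cup[\beta,L]$.

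First I would fix the common behaviour outside $\tilde{\mathcal{O}}$. Choose $g_\ell\in C^2([0,\alpha])$ strictly increasing with $g_\ell(0)=0$, and $g_\ell>0$, $g_\ell'>0$ on $(0,\alpha]$, and $g_r\in C^2([\beta,L])$ strictly decreasing with $g_r(L)=0$, and $g_r>0$, $g_r'<0$ on $[\beta,L)$. Set $\eta_1=\eta_2:=g_\ell$ on $[0,\alpha]$ and $\eta_1=\eta_2:=g_r$ on $[\beta,L]$. On $I\setminus\tilde{\mathcal{O}}$ the two functions then coincide, vanish exactly at $x=0,L$, are strictly positive inside, and satisfy $|\eta_{i,x}|=|g_\ell'|>0$ (resp. $|g_r'|>0$); since $\omega_i\subset\tilde{\mathcal{O}}$, every such point lies in $\overline I\setminus\omega_i$, so the required derivative condition already holds there.

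Next, for each $i=1,2$ I would complete $\eta_i$ on $[\alpha,\beta]$ by a smooth single bump. Fix a peak location $x_i^\ast\in\omega_i$ and a common height $H>\max\{\,\|g_\ell\|_\infty,\|g_r\|_\infty\,\}$, and I construct $\eta_i\in C^2([\alpha,\beta])$ with $\eta_{i,x}>0$ on $[\alpha,x_i^\ast)$, $\eta_{i,x}(x_i^\ast)=0$, $\eta_{i,x}<0$ on $(x_i^\ast,\beta]$, with $\eta_i(x_i^\ast)=H$, and matching the data $\eta_i(\alpha)=g_\ell(\alpha)$, $\eta_i'(\alpha)=g_\ell'(\alpha)$, $\eta_i''(\alpha)=g_\ell''(\alpha)$ and $\eta_i(\beta)=g_r(\beta)$, $\eta_i'(\beta)=g_r'(\beta)$, $\eta_i''(\beta)=g_r''(\beta)$. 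Concretely one prescribes the derivative $\eta_{i,x}$ as a $C^1$ function on $[\alpha,\beta]$ having a single sign change at $x_i^\ast$ and the boundary values and slopes dictated by $g_\ell,g_r$, and then tunes the available free parameters (for instance the amplitude on each side of $x_i^\ast$) so that the two integral constraints $\int_\alpha^\beta\eta_{i,x}=g_r(\beta)-g_\ell(\alpha)$ and $\int_\alpha^{x_i^\ast}\eta_{i,x}=H-g_\ell(\alpha)$ hold; this is an elementary interpolation, always solvable once $H$ is large enough. Gluing the three pieces yields $\eta_i\in C^2(\overline I)$.

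By construction each $\eta_i$ is positive in $I$, vanishes on $\partial I$, has its unique critical point at $x_i^\ast\in\omega_i$ (hence $|\eta_{i,x}|>0$ on $\overline I\setminus\omega_i$), and $\eta_1=\eta_2$ on $I\setminus\tilde{\mathcal{O}}$; moreover $\max_I\eta_1=\max_I\eta_2=H$, which also yields the refinement $\|\eta_1\|_\infty=\|\eta_2\|_\infty$. The only point requiring care—and the natural main obstacle—is the simultaneous $C^2$ gluing at the junctions $x=\alpha,\beta$ together with the single-critical-point and equal-height requirements; but these reduce to finitely many scalar conditions that the abundant freedom available in one dimension (the choice of $g_\ell,g_r$, of $x_i^\ast\in\omega_i$, and of $H$) easily accommodates. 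Alternatively, one may start from a base profile furnished by Fursikov's Lemma \ref{l1prelim} applied with $\omega_0=\omega_1\cap\omega_2$, and then perturb it only inside $\tilde{\mathcal{O}}$, leaving it untouched on $I\setminus\tilde{\mathcal{O}}$, so as to displace the peak of each $\eta_i$ into the whole of $\omega_i$.
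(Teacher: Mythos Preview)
The paper does not actually prove this observation: Lemma~\ref{lemfursikov2ch5} is deferred entirely to \cite{Cara5}, and the observation is stated without argument, merely pointing to ``the proof'' in that reference. The construction in \cite{Cara5} works in arbitrary space dimension and proceeds by starting from a single Fursikov weight and perturbing it inside $\tilde{\mathcal O}$ (a Morse--type argument) so as to relocate the critical set into each $\omega_i$; the equality of the sup--norms then comes out because the perturbation leaves the maximum value untouched. Your approach is genuinely different and more elementary: you exploit that in one space dimension a Fursikov weight is just a $C^2$ bump with a single interior maximum, and you build both $\eta_i$ by hand---common monotone pieces on $[0,\alpha]$ and $[\beta,L]$, and separate single--peak interpolants on $[\alpha,\beta]$ with peaks at $x_i^\ast\in\omega_i$ and a common prescribed height $H$. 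This is correct; the $C^2$ matching at $\alpha,\beta$ together with the two integral constraints is indeed a finite system that is easily solvable once $H$ is large, exactly as you say. Two small points deserve a sentence each. First, Lemma~\ref{lemfursikov2ch5} is stated for a \emph{given} $\tilde{\mathcal O}$, so strictly speaking you are not free to redefine it inside the proof; however, in the paper $\tilde{\mathcal O}$ is itself an auxiliary choice made just before the lemma, so restricting to an interval is harmless in context---just say so explicitly. Second, your opening remark that $\omega_1\cap\omega_2\neq\emptyset$ makes $\omega_1\cup\omega_2$ connected is not actually used in your construction (you only need $\omega_i\subset(\alpha,\beta)$), and in fact the subsequent hypotheses \eqref{condition4ch5}--\eqref{condition5ch5} typically force $\omega_1\cap\omega_2=\emptyset$; it is cleaner to drop that sentence.
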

\begin{observation}
From $(\ref{condition1ch5})$, $(\ref{eq8ch5})$ and $(\ref{condition3ch5})$, we see that it can be assumed that either
\begin{equation}
\label{condition4ch5}
\omega_1 \cap \mathcal{O}_{2,d} = \emptyset \ \ \text{and}\ \ \ \omega_2 \cap \mathcal{O}_{1,d} = \emptyset
\end{equation}
or
\begin{equation}
\label{condition5ch5}
\omega_i \subset \mathcal{O}_{j,d}\ \ \text{and}\ \ \omega_j \cap \mathcal{O}_{i,d} = \emptyset,\ \ \text{with}\ \ (i,j)=(1,2)\ \text{or}\ (i,j)=(2,1). 
\end{equation}
\end{observation}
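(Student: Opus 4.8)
The assertion is purely set-theoretic, so the plan is to argue directly with the nonempty open sets $A_i:=\mathcal{O}_{i,d}\cap\mathcal{O}$, $i=1,2$. By $(\ref{condition1ch5})$ each $A_i$ is nonempty, and by $(\ref{eq8ch5})$ we have $A_1\neq A_2$. Throughout I keep the clause $\omega_i\subset\subset\mathcal{O}_{i,d}\cap\tilde{\mathcal{O}}$ of $(\ref{condition3ch5})$, but in the present case I drop the overlap requirement $\omega_1\cap\omega_2\neq\emptyset$, which is incompatible with $(\ref{condition4ch5})$ and $(\ref{condition5ch5})$ and belongs to the equal-domain situation. I also note that $\tilde{\mathcal{O}}\subset\subset\mathcal{O}$ is still free: since only $\mathcal{O}_{i,d}\cap\tilde{\mathcal{O}}\neq\emptyset$ is needed, I may enlarge $\tilde{\mathcal{O}}$ toward $\mathcal{O}$ so as to capture any prescribed point of $\mathcal{O}$.

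The construction splits according to how $A_1$ and $A_2$ are positioned. Suppose first that $A_1\not\subset\overline{\mathcal{O}_{2,d}}$ and $A_2\not\subset\overline{\mathcal{O}_{1,d}}$. Then I may pick $x_1\in A_1\setminus\overline{\mathcal{O}_{2,d}}$ and $x_2\in A_2\setminus\overline{\mathcal{O}_{1,d}}$, and since these residual sets are open I may take small connected open neighborhoods $\omega_i\ni x_i$ with $\omega_i\subset\subset\mathcal{O}_{i,d}\cap\tilde{\mathcal{O}}$, $\omega_1\cap\mathcal{O}_{2,d}=\emptyset$ and $\omega_2\cap\mathcal{O}_{1,d}=\emptyset$; this is exactly $(\ref{condition4ch5})$. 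Suppose instead, after relabeling, that $A_1\subset\overline{\mathcal{O}_{2,d}}$. Taking any $y\in A_1$ and an open neighborhood $U\subset A_1$ of it, the relation $y\in\overline{\mathcal{O}_{2,d}}$ forces $U\cap\mathcal{O}_{2,d}\neq\emptyset$, hence $\mathcal{O}_{1,d}\cap\mathcal{O}_{2,d}\cap\mathcal{O}\neq\emptyset$, and I may choose $\omega_1\subset\subset\mathcal{O}_{1,d}\cap\tilde{\mathcal{O}}$ with $\omega_1\subset\mathcal{O}_{2,d}$. It remains to place $\omega_2$ with $\omega_2\cap\mathcal{O}_{1,d}=\emptyset$, which is possible as soon as $A_2\not\subset\overline{\mathcal{O}_{1,d}}$; this is where $(\ref{eq8ch5})$ enters, since $A_1=A_2$ would make $\mathcal{O}_{2,d}\cap\mathcal{O}\subset\mathcal{O}_{1,d}$ and render such an $\omega_2$ impossible. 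The result is $(\ref{condition5ch5})$ with $(i,j)=(1,2)$, and the symmetric relabeling yields $(i,j)=(2,1)$.

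The one delicate point — and the reason the statement reads ``it can be assumed'' rather than ``it holds'' — is the upgrade from a point $x\notin\mathcal{O}_{j,d}$ to a whole neighborhood of $x$ disjoint from $\mathcal{O}_{j,d}$, which requires $x\notin\overline{\mathcal{O}_{j,d}}$ and not merely $x\notin\mathcal{O}_{j,d}$. Thus the main obstacle is topological rather than analytic: I must ensure the selected points avoid the boundaries $\partial\mathcal{O}_{1,d}\cup\partial\mathcal{O}_{2,d}$, and I must exclude the degenerate situation in which $A_1$ and $A_2$ differ only along these boundaries, so that simultaneously $A_1\subset\overline{\mathcal{O}_{2,d}}$ and $A_2\subset\overline{\mathcal{O}_{1,d}}$ and neither alternative applies. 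In dimension one both boundaries are discrete sets, so for observation domains that are finite unions of intervals — the relevant case here — this degeneracy does not occur and every point of $A_i\setminus\mathcal{O}_{j,d}$ can be taken off $\overline{\mathcal{O}_{j,d}}$; for general open $\mathcal{O}_{i,d}$ one shrinks them slightly so that $A_1\neq A_2$ is witnessed in the interior, after which the two alternatives above are exhaustive.
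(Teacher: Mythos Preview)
The paper provides no proof for this observation; it is stated without argument, the content being imported from~\cite{Cara5}. Your write-up therefore goes beyond what the paper offers, and the overall strategy --- exhausting the possible relative positions of $A_1=\mathcal{O}_{1,d}\cap\mathcal{O}$ and $A_2=\mathcal{O}_{2,d}\cap\mathcal{O}$ and then placing the $\omega_i$ accordingly --- is the natural one. You are also right to flag and discard the clause $\omega_1\cap\omega_2\neq\emptyset$ from $(\ref{condition3ch5})$: it is plainly inconsistent with $(\ref{condition4ch5})$, and the observation's reference to $(\ref{condition3ch5})$ must be read as invoking only the containments $\omega_i\subset\subset\mathcal{O}_{i,d}\cap\tilde{\mathcal{O}}$.

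There is, however, one slip in your last paragraph. The claim that for finite unions of intervals ``this degeneracy does not occur and every point of $A_i\setminus\mathcal{O}_{j,d}$ can be taken off $\overline{\mathcal{O}_{j,d}}$'' is false as written. Take $\mathcal{O}=(0,1)$, $\mathcal{O}_{1,d}=(0,\tfrac12)\cup(\tfrac12,1)$ and $\mathcal{O}_{2,d}=(0,1)$. Both are finite unions of intervals and $(\ref{condition1ch5})$, $(\ref{eq8ch5})$ hold, yet $A_1\subset\overline{\mathcal{O}_{2,d}}$ and $A_2\subset\overline{\mathcal{O}_{1,d}}$ simultaneously; the only point of $A_2\setminus\mathcal{O}_{1,d}$ is $\tfrac12\in\partial\mathcal{O}_{1,d}$, and neither $(\ref{condition4ch5})$ nor $(\ref{condition5ch5})$ can be realized by any choice of $\omega_1,\omega_2$. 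So finiteness of the interval decomposition does not by itself rule out the degenerate case. Your fallback --- shrinking one of the $\mathcal{O}_{i,d}$ so that $A_1\neq A_2$ is witnessed on an open set, i.e.\ so that some $A_i\setminus\overline{\mathcal{O}_{j,d}}$ becomes nonempty --- is the correct repair and matches the informal ``it can be assumed'' in the statement: the observation windows $\mathcal{O}_{i,d}$ may always be trimmed without loss for the controllability conclusion. With that caveat replacing the interval claim, your argument is complete.
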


Let us introduce the weight functions
$$\sigma(x,t):= \dfrac{e^{4\lambda\|\eta_0\|_{L^\infty(\Omega)}} - e^{\lambda (2\|\eta_0\|_{L^\infty(\Omega)}+ \eta_0(x))}}{t(T-t)}, \ \ \xi(x,t):= \dfrac{e^{\lambda (2\|\eta_0\|_{L^\infty(\Omega)}+ \eta^0(x))}}{t(T-t)},$$
$$\sigma_i(x,t):= \dfrac{e^{4\lambda\|\eta_i\|_{L^\infty(\Omega)}} - e^{\lambda (2\|\eta_i\|_{L^\infty(\Omega)}+ \eta_i(x))}}{t(T-t)}, \ \ \xi_i(x,t):= \dfrac{e^{\lambda (2\|\eta_i\|_{L^\infty(\Omega)}+ \eta_i(x))}}{t(T-t)},$$
and the notations
\begin{equation}
\label{notation1ch5}
I_m(\psi):= s^{m-4} \lambda^{m-3} \iint_Q e^{-2s \sigma} (\xi)^{m-4}(|\psi_t|^2 + |\psi_{xx}|^2)dxdt + L_m (\psi),
\end{equation}
\begin{equation}
\label{notation2ch5}
I^i_m(\psi):= s^{m-4} \lambda^{m-3} \iint_Q e^{-2s \sigma_i} (\xi_i)^{m-4}(|\psi_t|^2 + |\psi_{xx}|^2)dxdt + L^i_m (\psi),
\end{equation}
where
$$L_m(\psi):= s^{m-2} \lambda^{m-1} \iint_Q e^{-2s \sigma} (\xi)^{m-2} |\psi_x|^2 dxdt + s^{m} \lambda^{m+1} \iint_Q e^{-2s \sigma} (\xi)^m |\psi|^2 dxdt,$$
$$L^i_m(\psi):= s^{m-2} \lambda^{m-1} \iint_Q e^{-2s \sigma_i} (\xi_i)^{m-2} |\psi_x|^2 dxdt + s^{m} \lambda^{m+1} \iint_Q e^{-2s \sigma_i} (\xi_i)^m |\psi|^2 dxdt.$$

\begin{propposition}
\label{prop1ch5}
Assume that $(\ref{condition1ch5})-(\ref{eq8ch5})$ are satisfied. Then, there exists $C(I,\mathcal{O})>0$ such that, for every $s\geq C(T+T^2)$ and every $\lambda \geq C$, the solution $(\varphi,\gamma^1,\gamma^2)$ to $(\ref{eq20ch5})$ associated to $\varphi^T \in L^2(I)$ satisfies the following:
\begin{itemize}
\item[i)] If $(\ref{condition2ch5})$ holds, then
\begin{align}
\label{carleman1ch5}
I_0(\varphi) + I_0(h) \leq &C \left(s^{-3} \lambda^{-2}\iint_Q e^{-2s \sigma} (|\mathcal{G}|^2 + |\mathcal{G}_1|^2 + |\mathcal{G}_2|^2) dxdt \right. \nonumber \\ 
&+ \left. s^4 \lambda^5 \iint_{\mathcal{O} \times (0,T)} e^{-2s \sigma} \xi^4 |\varphi|^2dxdt \right).
\end{align}
\item[ii)] If $(\ref{condition4ch5})$ holds, then
\begin{align}
I^1_0(\gamma^1) + I^2_0(\gamma^2) + &s^{-3} \lambda^{-2} \iint_Q e^{-2s \sigma_1} (\xi_1)^{-3} |\varphi|^2 dxdt \nonumber\\
\leq &C \left( s^{-3} \lambda^{-3} \iint_Q (e^{-2s \sigma_1} + e^{-2s \sigma_2}) (|\mathcal{G}|^2 + |\mathcal{G}_1|^2 + |\mathcal{G}_2|^2) dxdt \right. \nonumber\\
\label{carleman2ch5}
&+ \left. s^4 \lambda^5 \iint_{\mathcal{O} \times (0,T)} (e^{-2s \sigma_1} (\xi_1)^4 + e^{-2s \sigma_2} (\xi_2)^4)|\varphi|^2 dxdt \right).
\end{align}

\item[iii)] If $(\ref{condition5ch5})$ holds for $(i,j)=(i_0,j_0)$ with $(i_0,j_0)=(1,2)$ or $(i_0,j_0)=(2,1)$ ,then

\begin{align}
I^{j_0}_0 (\gamma^{j_0}) + I^{i_0}_0 (h) + &s^{-3} \lambda^{-2} \iint_Q e^{-2s \sigma_{j_0}} (\xi_{j_0})^{-3} |\varphi|^2 dxdt  \nonumber \\
\leq &C \left(s^{-3} \lambda^{-3}\iint_Q (e^{-2s \sigma_1} + e^{-2s \sigma_2}) (|\mathcal{G}|^2 + |\mathcal{G}_1|^2 + |\mathcal{G}_2|^2) dxdt \right. \nonumber\\
\label{carleman3ch5}
&+ \left. s^4 \lambda^5 \iint_{\mathcal{O} \times (0,T)} (e^{-2s \sigma_1} (\xi_1)^4 + e^{-2s \sigma_2} (\xi_2)^4)|\varphi|^2dxdt \right).
\end{align}

\end{itemize}
where $h:= \alpha_1 \gamma^1 + \alpha_2 \gamma^2$.
\end{propposition}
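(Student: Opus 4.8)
The plan is to reduce everything to the \emph{scalar} global Carleman inequality of Fursikov--Imanuvilov applied to each of the three equations in (\ref{eq20ch5}) separately, and then to combine the resulting estimates by exploiting the cascade structure of the system so that every local (observation) term is ultimately bounded by the single observation of $\varphi$ on $\mathcal{O}$. First I would recall the scalar estimate: for a weight built from $\eta_0$ (Lemma \ref{l1prelim}) or $\eta_i$ (Lemma \ref{lemfursikov2ch5}), any solution $\psi$ of a scalar parabolic equation $\pm\psi_t-(a(0,t,x)\psi_x)_x+(\text{lower order})=g$ with the matching boundary/endpoint data satisfies $I_0(\psi)\le C\bigl(\iint_Q e^{-2s\sigma}|g|^2\,dxdt+s^4\lambda^5\iint_{\omega\times(0,T)}e^{-2s\sigma}\xi^4|\psi|^2\,dxdt\bigr)$ for $s\ge C(T+T^2)$ and $\lambda\ge C$, where $\omega$ is the set on which $|\eta_x|$ is allowed to vanish. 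The zeroth- and first-order terms $D_1F(0,0)\psi$ and $D_2F(0,0)\psi_x$ are harmless: their weighted $L^2$-norms are absorbed by $L_0(\psi)\subset I_0(\psi)$ once $s,\lambda$ are large.

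For case (i), since $\mathcal{O}_{1,d}=\mathcal{O}_{2,d}=:\mathcal{O}_d$ the two followers enter the $\varphi$-equation only through $h=\alpha_1\gamma^1+\alpha_2\gamma^2$, and $h$ itself solves a forward parabolic equation with source $-(\alpha_1/\mu_1)\varphi 1_{\mathcal{O}_1}-(\alpha_2/\mu_2)\varphi 1_{\mathcal{O}_2}$. I would apply the scalar estimate with the single weight $\eta_0$ to both $\varphi$ and $h$, add the two inequalities, and then: (a) absorb the global coupling term $\iint_Q e^{-2s\sigma}|h|^2$ arising from $\varphi$'s source into $L_0(h)\subset I_0(h)$, and symmetrically the $\varphi$-source term into $L_0(\varphi)$, using that these terms carry strictly lower powers of $s,\lambda$ than the dominant $s^0\lambda^1$ contribution of $L_0$ and taking $s,\lambda$ large; (b) bound the surviving local term $s^4\lambda^5\iint_{\omega_0\times(0,T)}e^{-2s\sigma}\xi^4|h|^2$ by the observation of $\varphi$. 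Step (b) is the crux: on $\omega_0\subset\subset\mathcal{O}_d\cap\tilde{\mathcal{O}}$ I substitute $h=-\varphi_t-(a(0,t,x)\varphi_x)_x+D_1F(0,0)\varphi-D_2F(0,0)\varphi_x-\mathcal{G}$ from the $\varphi$-equation, integrate by parts against a smooth cutoff supported in $\tilde{\mathcal{O}}$ and equal to $1$ on $\omega_0$ to move derivatives off $\varphi$, and thereby dominate the local $h$-term by $\epsilon\,I_0(\varphi)+C\,s^4\lambda^5\iint_{\mathcal{O}\times(0,T)}e^{-2s\sigma}\xi^4|\varphi|^2+C\,s^{-3}\lambda^{-2}\iint_Q e^{-2s\sigma}|\mathcal{G}|^2$. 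Absorbing $\epsilon\,I_0(\varphi)$ into the left-hand side yields (\ref{carleman1ch5}).

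Cases (ii) and (iii) are where $\mathcal{O}_{1,d}\ne\mathcal{O}_{2,d}$, and here I use the two separated weights $\eta_1,\eta_2$ of Lemma \ref{lemfursikov2ch5}, which coincide outside $\tilde{\mathcal{O}}$; this agreement is exactly what lets the cross-weights $e^{-2s\sigma_1}$ and $e^{-2s\sigma_2}$ be compared on $Q\setminus\tilde{\mathcal{O}}$ while remaining free inside. For case (ii) (condition (\ref{condition4ch5})) I apply the $\eta_i$-Carleman estimate to $\gamma^i$, producing a local term in $\omega_i\subset\subset\mathcal{O}_{i,d}\cap\tilde{\mathcal{O}}$; since $\omega_i\subset\mathcal{O}_{i,d}$ while $\omega_i\cap\mathcal{O}_{j,d}=\emptyset$, the $\varphi$-equation restricted to $\omega_i$ reads $-\varphi_t-(a\varphi_x)_x+\cdots=\alpha_i\gamma^i+\mathcal{G}$ with \emph{only} $\gamma^i$ present, so I transfer the local $\gamma^i$-term into local $\varphi$-data exactly as in step (b), with no uncontrolled cross coupling; the weighted global norm $s^{-3}\lambda^{-2}\iint_Q e^{-2s\sigma_1}(\xi_1)^{-3}|\varphi|^2$ is the byproduct of the coupling that is retained on the left, giving (\ref{carleman2ch5}). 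Case (iii) (condition (\ref{condition5ch5})) is a hybrid: for the index $i_0$ with $\omega_{i_0}\subset\mathcal{O}_{j_0,d}$ I argue through $h$ as in case (i), while for $\gamma^{j_0}$, whose observation region $\omega_{j_0}$ satisfies $\omega_{j_0}\cap\mathcal{O}_{i_0,d}=\emptyset$, I argue directly as in case (ii); combining the two gives (\ref{carleman3ch5}).

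The hard part I anticipate is the local-term transfer of step (b) and its analogues. Controlling $\iint_{\omega}(\text{weight})\,|\gamma^i|^2$ (or $|h|^2$) purely by the observation of $\varphi$ forces one to differentiate the $\varphi$-equation and integrate by parts against weights that blow up at $t=0,T$; keeping the time-endpoint and boundary contributions under control, matching the exact powers $s^4\lambda^5$ and $\xi^4$ so that the $\varphi$-terms absorbed into the left do not exceed what the scalar estimate delivers, and---most delicately in cases (ii)--(iii)---ensuring that the comparison of $\sigma_1$ and $\sigma_2$ (which differ inside $\tilde{\mathcal{O}}$) does not spoil the absorption, are the genuinely technical points. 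By contrast, the bookkeeping of $s,\lambda$ powers certifying that all global coupling terms are strictly lower order is routine once the weight geometry supplied by Lemmas \ref{l1prelim}--\ref{lemfursikov2ch5} is in place.
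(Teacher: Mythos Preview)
Your sketch is correct and matches the standard route; note, however, that the paper does not actually prove this proposition---its entire proof is the two citations ``For i) see \cite{Cara1}, and for ii) and iii) see \cite{Cara5}.'' The strategy you outline (scalar Fursikov--Imanuvilov estimates applied to each equation, absorption of the global coupling by taking $s,\lambda$ large, and elimination of the local $h$- or $\gamma^i$-terms by substituting from the $\varphi$-equation and integrating by parts against a cutoff supported in $\tilde{\mathcal{O}}$) is exactly the method of those references, and your reading of how conditions (\ref{condition4ch5})--(\ref{condition5ch5}) decouple the followers on the observation sets $\omega_i$ is the right one.
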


\begin{proof}
For $i)$ see \cite{Cara1}, and for $ii)$ and $iii)$ see \cite{Cara5}.
\end{proof}
%%%%%%%%%%%%%%%%%%%%%%%%%%
%%%%%%%%%%%%%%%%%%%%%%%%%%%
%%%%%%%%%%%%%%%%%%%%%%%%%%%%

We will apply a standard observability argument, in fact let us consider the following weight functions
\begin{equation*}
l(t):= 
\begin{cases}
&T^2/4, \ \text{for}\ \ 0 \leq t \leq T/2,\\
&t(T-t), \ \text{for}\ \ T/2 \leq t \leq T,
\end{cases}
\end{equation*}
and
$$\overline{\sigma}(x,t):= \dfrac{e^{4\lambda\|\eta^0\|_{L^\infty(\Omega)}} - e^{\lambda (2\|\eta^0\|_{L^\infty(\Omega)}+ \eta^0(x))}}{l(t)}, \ \ \overline{\xi}(x,t):= \dfrac{e^{\lambda (2\|\eta^0\|_{L^\infty(\Omega)}+ \eta^0(x))}}{l(t)},$$
$$\overline{\sigma}_i(x,t):= \dfrac{e^{4\lambda\|\eta_i\|_{L^\infty(\Omega)}} - e^{\lambda (2\|\eta_i\|_{L^\infty(\Omega)}+ \eta_i(x))}}{l(t)}, \ \ \overline{\xi}_i(x,t):= \dfrac{e^{\lambda (2\|\eta_i\|_{L^\infty(\Omega)}+ \eta_i(x))}}{l(t)}.$$
We consider
$$\sigma^*(t):= \underset{x \in \Omega}{\text{max}}\ \overline{\sigma}(x,t),\ \ 
\hat{\sigma}(t):= \underset{x \in \Omega}{\text{min}}\ \overline{\sigma}(x,t),\ \ \xi^*(t):=\underset{x \in \Omega}{\text{max}}\ \overline{\xi}(x,t),$$
$$\sigma_i^*(t):= \underset{x \in \Omega}{\text{max}}\ \overline{\sigma}_i(x,t),\ \ 
\hat{\sigma}_i(t):= \underset{x \in \Omega}{\text{min}}\ \overline{\sigma}_i(x,t), \ \ \xi^*_i(t):=\underset{x \in \Omega}{\text{max}}\ \overline{\xi}_i(x,t).$$
If $\lambda > 1 / \|\eta^0\|_{\infty}$ and $\lambda > 1 / \|\eta_i\|_{\infty}$ (sufficiently large), we have
\begin{equation}
\label{weight-estimate1ch5}
\hat{\sigma} \leq \overline{\sigma} < \frac{5}{4} \hat{\sigma}, \  \ \frac{4}{5}\sigma^*< \overline{\sigma} \leq \sigma^*,
\end{equation}
\begin{equation}
\label{weight-estimate2ch5}
\hat{\sigma}_i \leq \overline{\sigma}_i < \frac{5}{4} \hat{\sigma}_i, \  \ \frac{4}{5}\sigma^*_i< \overline{\sigma}_i \leq \sigma^*_i.
\end{equation}

Let us denote by $\overline{I}_m(\varphi)$ the right-hand side of $(\ref{notation1ch5})$ with $\sigma$ and $\xi$ respectively replaced by $\overline{\sigma}$ and $\overline{\xi}$. Then, one can directly see from the energy estimate and the Proposition $\ref{prop1ch5}$ that
\begin{itemize}
\item[i)] If $(\ref{condition2ch5})$ holds, then 
\begin{align*}
\|\varphi(0)\|^2+\overline{I}_0(\varphi) + \overline{I}_0(h) \leq &C \left(s^{-3} \lambda^{-2}\iint_Q e^{-2s \overline{\sigma}} (|\mathcal{G}|^2 + |\mathcal{G}_1|^2 + |\mathcal{G}_2|^2) dxdt \right. \nonumber \\ 
&+ \left. s^4 \lambda^5 \iint_{\mathcal{O} \times (0,T)} e^{-2s \overline{\sigma}} \overline{\xi}^4 |\varphi|^2dxdt \right).
\end{align*}
\item[ii)] If $(\ref{condition4ch5})$ holds, then
\begin{align*}
\|\varphi(0)\|^2 + &s^{-3} \lambda^{-2} \iint_Q e^{-2s \overline{\sigma}_1} (\overline{\xi}_1)^{-3} |\varphi|^2 dxdt \nonumber\\
\leq &C \left( s^{-3} \lambda^{-3} \iint_Q (e^{-2s \overline{\sigma}_1} + e^{-2s \overline{\sigma}_2}) (|\mathcal{G}|^2 + |\mathcal{G}_1|^2 + |\mathcal{G}_2|^2) dxdt \right. \nonumber\\
&+ \left. s^4 \lambda^5 \iint_{\mathcal{O} \times (0,T)} (e^{-2s \overline{\sigma}_1} (\overline{\xi}_1)^4 + e^{-2s \overline{\sigma}_2} (\overline{\xi}_2)^4)|\varphi|^2 dxdt \right).
\end{align*}
\item[iii)] If $(\ref{condition5ch5})$ holds for $(i,j)=(i_0,j_0)$ with $(i_0,j_0)=(1,2)$ or $(i_0,j_0)=(2,1)$ ,then
\begin{align*}
\|\varphi(0)\|^2 + &s^{-3} \lambda^{-2} \iint_Q e^{-2s \overline{\sigma}_{j_0}} (\tilde{\xi}_{j_0})^{-3} |\varphi|^2 dxdt  \nonumber \\
\leq &C \left(s^{-3} \lambda^{-3}\iint_Q (e^{-2s \overline{\sigma}_1} + e^{-2s \overline{\sigma}_2}) (|\mathcal{G}|^2 + |\mathcal{G}_1|^2 + |\mathcal{G}_2|^2) dxdt \right. \nonumber\\
&+ \left. s^4 \lambda^5 \iint_{\mathcal{O} \times (0,T)} (e^{-2s \overline{\sigma}_1} (\overline{\xi}_1)^4 + e^{-2s \overline{\sigma}_2} (\overline{\xi}_2)^4)|\varphi|^2dxdt \right).
\end{align*}
\end{itemize}
Now, we denote 
\begin{equation}
\label{weight-betach5}
\beta(x,t):= \frac{2}{5} \overline{\sigma}(x,t), \ \ \beta^*(t):= \underset{x \in \Omega}{\text{max}}\ {\beta}(x,t),\ \ \hat{\beta}(t):= \underset{x \in \Omega}{\text{min}}\ {\beta}(x,t)
\end{equation}
and
\begin{equation}
\label{weight-beta_ich5}
\beta_i(x,t):= \frac{2}{5} \overline{\sigma_i}(x,t), \ \ \beta^*_i(t):= \underset{x \in \Omega}{\text{max}}\ {\beta}_i(x,t),\ \ \hat{\beta}_i(t):= \underset{x \in \Omega}{\text{min}}\ {\beta}_i(x,t).
\end{equation}

Using $(\ref{weight-estimate1ch5})-(\ref{weight-beta_ich5})$ in the last result, we get

\begin{itemize}
\item[i)]  If $(\ref{condition2ch5})$ holds, then
\begin{align*}
\|\varphi(0)\|^2 + \iint_Q e^{-5s \beta^*} |\varphi|^2 dxdt \leq &C \left(\iint_Q e^{-4s \beta^*} (|\mathcal{G}|^2 + |\mathcal{G}_1|^2 + |\mathcal{G}_2|^2) dxdt \right. \nonumber \\ 
&+ \left.\iint_{\mathcal{O} \times (0,T)} e^{-4s \beta^*} ({\xi}^*)^4 |\varphi|^2dxdt \right).
\end{align*}
\item[ii)] If $(\ref{condition4ch5})$ holds, then
\begin{align*}
\|\varphi(0)\|^2 +  \iint_Q e^{-5 s \beta^*_1} ({\xi}^*_1)^{-3} |\varphi|^2 dxdt \leq &C \left( \iint_Q e^{-4s \beta^*_1} (|\mathcal{G}|^2 + |\mathcal{G}_1|^2 + |\mathcal{G}_2|^2) dxdt \right. \\
&+ \left. \iint_{\mathcal{O} \times (0,T)} e^{-4s \beta^*_1} ({\xi}^*_1)^4 |\varphi|^2 dxdt \right).
\end{align*}
\item[iii)] If $(\ref{condition5ch5})$ holds for $(i,j)=(i_0,j_0)$ with $(i_0,j_0)=(1,2)$ or $(i_0,j_0)=(2,1)$ ,then
\begin{align*}
\|\varphi(0)\|^2 + \iint_Q e^{-5s \beta^*_{j_0}} ({\xi}^*_{j_0})^{-3} |\varphi|^2 dxdt \leq &C \left( \iint_Q e^{-4s \beta^*_1} (|\mathcal{G}|^2 + |\mathcal{G}_1|^2 + |\mathcal{G}_2|^2) dxdt \right. \\
&+ \left. \iint_{\mathcal{O} \times (0,T)} e^{-4s \beta^*_1} ({\xi}^*_1)^4 |\varphi|^2 dxdt \right).
\end{align*}
\end{itemize}

Taking the PDE satisfied by the $\gamma^i$ in $(\ref{eq20ch5})$, multiplying by $e^{-5s \beta} \gamma^i$ or $e^{-5s \beta_j} (\xi^*_j)^{-3} \gamma^i$, we easily see that
\begin{align*}
\iint_Q e^{-5s \beta^*} |\gamma^i|^2 dxdt \leq C \left( \iint_Q e^{-5s \beta^*} (|\mathcal{G}|^2 + |\mathcal{G}_1|^2 + |\mathcal{G}_2|^2) dxdt + \iint_Q e^{-5s \beta^*} |\varphi|^2 dxdt \right)
\end{align*}
or
\begin{align*}
\iint_Q e^{-5s \beta^*_j} (\xi^*_j)^{-3} |\gamma^i|^2 dxdt \leq &C \left( \iint_Q e^{-5s \beta^*_j} (\xi^*_j)^{-3} (|\mathcal{G}|^2 + |\mathcal{G}_1|^2 + |\mathcal{G}_2|^2) dxdt  \right.\\
&+ \left. \iint_Q e^{-5s \beta^*_j} (\xi^*_j)^{-3} |\varphi|^2 dxdt \right)
\end{align*}
Then, joined the last results we obtain
\begin{align*}
\|\varphi(0)\|^2 + \iint_Q e^{-5s \beta^*} |\varphi|^2 dxdt + &\iint_Q e^{-5s \beta^*} (|\gamma^1|^2 + |\gamma^2|^2) dxdt\\
\leq &C \left(\iint_Q e^{-4s \beta^*} (|\mathcal{G}|^2 + |\mathcal{G}_1|^2 + |\mathcal{G}_2|^2) dxdt \right. \nonumber \\ 
&+ \left.\iint_{\mathcal{O} \times (0,T)} e^{-4s \beta^*} ({\xi}^*)^4 |\varphi|^2dxdt \right).
\end{align*}
or
\begin{align*}
\|\varphi(0)\|^2 + \iint_Q e^{-5s \beta_1^*} (\xi^*_1)^{-3} |\varphi|^2 dxdt + &\iint_Q e^{-5s \beta^*_1} (\xi^*_1)^{-3} (|\gamma^1|^2 + |\gamma^2|^2) dxdt\\
\leq &C \left(\iint_Q e^{-4s \beta^*_1} (|\mathcal{G}|^2 + |\mathcal{G}_1|^2 + |\mathcal{G}_2|^2) dxdt \right. \nonumber \\ 
&+ \left.\iint_{\mathcal{O} \times (0,T)} e^{-4s \beta^*_1} ({\xi}^*_1)^4 |\varphi|^2dxdt \right).
\end{align*}

Finally, for the two cases, we have the new observability inequality

\begin{align}
\label{observabilitych5}
\|\varphi(0)\|^2 + &\iint_Q e^{-5s \overline{\beta}^*} (\overline{\xi}^*)^{-3} (|\varphi|^2 +|\gamma^1|^2 + |\gamma^2|^2) dxdt \nonumber\\
\leq &C \left(\iint_Q e^{-4s \overline{\beta}^*} (|\mathcal{G}|^2 + |\mathcal{G}_1|^2 + |\mathcal{G}_2|^2) dxdt \right. \nonumber \\ 
&+ \left.\iint_{\mathcal{O} \times (0,T)} e^{-4s \overline{\beta}^*} (\overline{\xi}^*)^4 |\varphi|^2dxdt \right).
\end{align}

where $(\overline{\beta}^*,\overline{\xi}^*):= (\beta^*,\xi^*)$ or $(\beta^*_1,\xi^*_1)$.
%%%%%%%%%%%%%%%%%%%%%%%
%%%%%%%%%%%%%%%%%%%%%%%
%%%%%%%%%%%%%%%%%%%%%%%

Let us define
\begin{equation}
\label{eq29ch5}
\begin{aligned}
&\rho:= e^{5s \overline{\beta}^*/2} (\overline{\xi}^*)^{3/2},\ \ \rho_0 := e^{2s\overline{\beta}^*}, \ \ \rho_1 := e^{2s\overline{\beta}^*} (\overline{\xi}^*)^{-2}, \\ 
&{\rho}_2:= e^{3s\overline{\beta}^*/2} (\overline{\xi}^*)^{-3},\ \ \rho_3 := e^{3s\overline{\beta}^*/2} (\overline{\xi}^*)^{-8},  \ \ \rho_4 = e^{3s\overline{\beta}^*/2} (\overline{\xi}^*)^{-9}, \ \ {\rho}_5 := e^{3s\overline{\beta}^*/2} (\overline{\xi}^*)^{-10}. 
\end{aligned}
\end{equation}
\begin{propposition}
\label{prop3ch5}
Assume that $ \rho G \in L^2(Q)$, ${\rho}_3 G_{t}\in L^2(Q)$, $\rho G_i \in L^2(Q)$ and $G(0) \in H_0^1(I)$ $(i=1,2)$. Then $(\ref{eq18ch5})$ is null-controllable. More precisely, for any $y_0 \in H^3(I) \cap H^1_0(I)$, there exists a control-state $(y,p^1,p^2,f)$ satisfying
\begin{equation}
\label{eq31'ch5}
f \in L^2(\mathcal{O} \times (0,T)),\ \ y,p^1,p^2 \in L^\infty(0,T;H_0^1(I)) \cap L^2(0,T;H^2(I))
\end{equation}
such that
\begin{equation}
\label{eq31ch5}
\iint_Q \rho_0^2 (|y|^2+|p^1|^2+|p^2|^2)dxdt<+\infty,\ \ \iint_{\mathcal{O} \times (0,T)} (\rho_1^2 |f|^2 + {\rho_3}^2 |f_t|^2) dxdt <+\infty.
\end{equation}
In particular $y(T)=0$.
\end{propposition}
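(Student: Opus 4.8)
The plan is to deduce controllability from the observability estimate $(\ref{observabilitych5})$ by the duality (penalized HUM / Fursikov--Imanuvilov) method. Write $\mathcal{L}$ and $\mathcal{M}_i$ for the left-hand side operators of the adjoint system $(\ref{eq20ch5})$: for $P=(\varphi,\gamma^1,\gamma^2)$,
\[
\mathcal{L}P := -\varphi_t - (a(0,t,x)\varphi_x)_x + D_1F(0,0)\varphi - D_2F(0,0)\varphi_x - \alpha_1\gamma^1 1_{\mathcal{O}_{1,d}} - \alpha_2\gamma^2 1_{\mathcal{O}_{2,d}},
\]
\[
\mathcal{M}_iP := \gamma^i_t - (a(0,t,x)\gamma^i_x)_x + D_1F(0,0)\gamma^i + D_2F(0,0)\gamma^i_x + \tfrac{1}{\mu_i}\varphi 1_{\mathcal{O}_i},
\]
so that $(\ref{eq20ch5})$ reads $\mathcal{L}P=\mathcal{G}$, $\mathcal{M}_iP=\mathcal{G}_i$. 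On the space $P_0 := \{P \in C^2(\overline{Q})^3 : \varphi=\gamma^i=0 \text{ on } \Sigma,\ \gamma^i(\cdot,0)=0\}$ I would introduce the symmetric bilinear form
\[
a(P,\widehat{P}) := \iint_Q e^{-4s\overline{\beta}^*}\Big(\mathcal{L}P\,\mathcal{L}\widehat{P} + \sum_{i=1}^2 \mathcal{M}_iP\,\mathcal{M}_i\widehat{P}\Big)\,dxdt + \iint_{\mathcal{O}\times(0,T)} e^{-4s\overline{\beta}^*}(\overline{\xi}^*)^4\,\varphi\widehat{\varphi}\,dxdt .
\]
The point is that $C\,a(P,P)$ is exactly the right-hand side of $(\ref{observabilitych5})$ with $\mathcal{G}=\mathcal{L}P$ and $\mathcal{G}_i=\mathcal{M}_iP$; hence $(\ref{observabilitych5})$ becomes $\|\varphi(0)\|^2 + \iint_Q \rho^{-2}(|\varphi|^2+|\gamma^1|^2+|\gamma^2|^2)\,dxdt \le C\,a(P,P)$, which shows that $a(\cdot,\cdot)$ is a scalar product on $P_0$. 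I let $P$ denote the completion of $P_0$ for the associated norm.

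Next I would check that
\[
\ell(P) := \iint_Q G\varphi\,dxdt + \sum_{i=1}^2 \iint_Q G_i\gamma^i\,dxdt + \int_I y_0\,\varphi(0)\,dx
\]
is continuous on $P$: by Cauchy--Schwarz, $|\iint_Q G\varphi|\le \|\rho G\|_{L^2(Q)}\,\|\rho^{-1}\varphi\|_{L^2(Q)} \le C\|\rho G\|_{L^2(Q)}\,a(P,P)^{1/2}$, and likewise for the $G_i$ and the $y_0$ terms, using precisely the hypotheses $\rho G,\rho G_i\in L^2(Q)$ and $y_0\in H^3(I)\cap H^1_0(I)\subset L^2(I)$ together with the coercivity above. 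Lax--Milgram then provides a unique $\widehat{P}=(\widehat{\varphi},\widehat{\gamma}^1,\widehat{\gamma}^2)\in P$ with $a(\widehat{P},P)=\ell(P)$ for all $P\in P$, and I would define the candidate control-state by
\[
y := e^{-4s\overline{\beta}^*}\mathcal{L}\widehat{P},\qquad p^i := e^{-4s\overline{\beta}^*}\mathcal{M}_i\widehat{P},\qquad f := -e^{-4s\overline{\beta}^*}(\overline{\xi}^*)^4\,\widehat{\varphi}\,1_{\mathcal{O}} .
\]

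Matching powers of the weights in $(\ref{eq29ch5})$ gives $\iint_Q \rho_0^2(|y|^2+|p^1|^2+|p^2|^2)\,dxdt + \iint_{\mathcal{O}\times(0,T)} \rho_1^2|f|^2\,dxdt \le C\,a(\widehat{P},\widehat{P})=C\,\ell(\widehat{P})<\infty$, which is the first half of $(\ref{eq31ch5})$. Substituting the definitions of $y,p^i,f$ into the Euler--Lagrange identity $a(\widehat{P},P)=\ell(P)$ turns its left-hand side into $\iint_Q y\,\mathcal{L}P + \sum_i \iint_Q p^i\,\mathcal{M}_iP - \iint_{\mathcal{O}\times(0,T)} f\varphi$; integrating by parts (legitimate once the regularity $(\ref{eq31'ch5})$ is available) and comparing the interior terms against $\varphi$ and against $\gamma^i$ reproduces the two equations of $(\ref{eq18ch5})$, the boundary term at $t=0$ forces $y(0)=y_0$, and, since the terminal data $\varphi(\cdot,T),\gamma^i(\cdot,T)$ are unconstrained in $P_0$, the boundary terms at $t=T$ force $p^i(T)=0$ and the required $y(T)=0$.

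It remains to establish the regularity $(\ref{eq31'ch5})$ and the last weighted bound $\iint_{\mathcal{O}\times(0,T)}\rho_3^2|f_t|^2\,dxdt<\infty$, and I expect this to be the main obstacle. The idea is to differentiate the optimality system in time and run the weighted estimates a second time; the source is then replaced by $G_t$, which is why the hypothesis $\rho_3 G_t\in L^2(Q)$ is imposed, while an initial-layer contribution appears that is absorbed using $G(0)\in H^1_0(I)$. The nested hierarchy $\rho_0,\dots,\rho_5$ of $(\ref{eq29ch5})$, progressively weakened through lower powers of $\overline{\xi}^*$, is tailored so that each bootstrapping step loses only a controllable power of $\overline{\xi}^*$. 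Finally, viewing each equation of $(\ref{eq18ch5})$ as a parabolic problem whose right-hand side is already controlled in the appropriate weighted $L^2$ spaces, standard parabolic regularity (bounded weights on $[0,T-\varepsilon]$, weighted decay near $t=T$) upgrades $y,p^1,p^2$ to $L^\infty(0,T;H^1_0(I))\cap L^2(0,T;H^2(I))$, yielding $(\ref{eq31'ch5})$ and, in particular, $y(T)=0$ in $I$.
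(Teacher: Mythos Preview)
Your construction is essentially the paper's: define the weighted bilinear form on smooth adjoint triples, use the observability inequality $(\ref{observabilitych5})$ to see it is a scalar product, complete, apply Lax--Milgram to the linear form built from $G,G_i,y_0$, and read off $(y,p^1,p^2,f)$ as the weighted images of $\mathcal{L}\widehat P,\ \mathcal{M}_i\widehat P,\ \widehat\varphi$. Your weights $e^{-4s\overline\beta^*}$ and $e^{-4s\overline\beta^*}(\overline\xi^*)^4$ are exactly $\rho_0^{-2}$ and $\rho_1^{-2}$, so the first half of $(\ref{eq31ch5})$ and the variational verification of $(\ref{eq18ch5})$ with $y(T)=0$ match the paper line by line.

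The one substantive divergence is your route to $\rho_3^2|f_t|^2\in L^1$. You propose to differentiate the optimality system in time and invoke $\rho_3 G_t\in L^2$ and $G(0)\in H^1_0$. The paper does \emph{not} differentiate here and does not use those two hypotheses at this stage (they enter only in the subsequent Proposition~\ref{prop6ch5}). Instead, since $f=-\rho_1^{-2}\widehat u$, the paper sets $\widehat w:=\rho_3\rho_1^{-2}\widehat u=-\rho_3 f$ (and $\widehat h^i:=\rho_3\rho_1^{-2}\widehat z^i$), checks that $(\widehat w,\widehat h^1,\widehat h^2)$ solves the same coupled adjoint system with new sources $H,H_i$ satisfying $\|H\|_{L^2}^2+\|H_i\|_{L^2}^2\le C\,a(\widehat P,\widehat P)$ thanks to the weight relations in $(\ref{eq29ch5})$--$(\ref{eq30ch5})$, and then reads off $(\rho_3 f)_t\in L^2(Q)$ and $\rho_3 f\in L^2(0,T;H^2\cap H^1_0)$ from standard parabolic regularity for that system. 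This weight-multiplication trick is cleaner than a time-differentiation and is what actually delivers $(\ref{eq35ch5})$ and $(\ref{control-estimatech5})$; your sketch would work in spirit but misattributes where $\rho_3 G_t$ and $G(0)$ are needed.
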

\begin{proof}
Let us denote $Lw=w_t - (a(0,t,x)w_{x})_x + D_1F(0,0) w + D_2F(0,0) w_x\ $ and $\ L^*w=-w_t - (a(0,t,x)w_{x})_x + D_1F(0,0) w - D_2F(0,0) w_x$, then, we define a vectorial space
$$\mathcal{X}_0 := \{(u,z^1,z^2) \in C^2(\overline{I})^3; u = 0,\  z^1=z^2=0\ \ \text{on}\ \Sigma,\ \ z^1(0)=z^2(0)=0 \}.$$
and an application   $\ \ b: \mathcal{X}_0 \times \mathcal{X}_0 \rightarrow \mathbb{R}$
\begin{align*}
b((u,&z^1,z^2),(\tilde{u},\tilde{z},\tilde{z}^1,\tilde{z}^2))\\
:= &\iint_Q \rho_0^{-2} (L^*u - \alpha_1 z^1 {1}_{\mathcal{O}_{1,d}} - \alpha_2 z^2 {1}_{\mathcal{O}_{2,d}})(L^*\tilde{u} - \alpha_1 \tilde{z}^1 {1}_{\mathcal{O}_{1,d}} - \alpha_2 \tilde{z}^2 {1}_{\mathcal{O}_{2,d}})dxdt\\
&+ \sum_{i=1}^2 \iint_Q \rho_0^{-2} (Lz^i  + \dfrac{1}{\mu_i} u {1}_{\mathcal{O}_i})(L\tilde{z}^i + \dfrac{1}{\mu_i} \tilde{u} {1}_{\mathcal{O}_i})dxdt\\
&+ \iint_{\mathcal{O} \times (0,T)} \rho_1^{-2}\ u \tilde{u}\ dxdt,\ \ \ \ \forall (u,z^1,z^2),(\tilde{u},\tilde{z}^1,\tilde{z}^2) \in \mathcal{X}_0.
\end{align*}
We will prove that $b(\cdot,\cdot)$ defines an inner product, for that, it is enough to prove:\\ 
If $b((u,z^1,z^2),(u,z^1,z^2))=0$,\ then $(u,z^1,z^2)=(0,0,0)$. Indeed, we have
\begin{align*}
\iint_Q &\rho_0^{-2} |L^*u - \alpha_1 z^1 {1}_{\mathcal{O}_{1,d}} - \alpha_2 z^2 {1}_{\mathcal{O}_{2,d}}|^2 dxdt \\
&+ \sum_{i=1}^2 \iint_Q \rho_0^{-2} |Lz^i + \dfrac{1}{\mu_i} u {1}_{\mathcal{O}_i}|^2 dxdt + \iint_{\mathcal{O} \times (0,T)} \rho_1^{-2} |u|^2 dxdt =0.
\end{align*}
Thus, we obtain the system
\begin{equation}
\label{eq32ch5}
\left|
\begin{array}{rl}
&L^*u = 0 +  \alpha_1 z^1 {1}_{\mathcal{O}_{1,d}} +  \alpha_2 z^2 {1}_{\mathcal{O}_{2,d}} \ \text{in} \ Q,\\
&Lz^i = 0 - \dfrac{1}{\mu_i} u {1}_{\mathcal{O}_i} \ \text{in} \ Q,\\
&u(0,t)=u(L,t) = 0, \ \ z^i(0,t)=z^i(L,t) =0, \ \ \text{ in } \ (0,T),\\
&u(T) = u^{T},\ \ z^i(0)=0, \ \ \text{ in } \ I.
\end{array}
\right.
\end{equation}
For the Proposition $\ref{prop1ch5}$ on the system $(\ref{eq32ch5})$, we have
\begin{align*}
\|u(0)\|^2 + &\iint_Q e^{-5s \overline{\beta}^*} (\overline{\xi}^*)^{-3} (|u|^2 +|z^1|^2 + |z^2|) dxdt \\
\leq &C \left(\iint_Q \rho_0^{-2} (|L^*u - \alpha_1 z^1 {1}_{\mathcal{O}_{1,d}} - \alpha_2 z^2 {1}_{\mathcal{O}_{2,d}}|^2 \right.\\
&+ |Lz^1 + \dfrac{1}{\mu_1} u {1}_{\mathcal{O}_1}|^2 + |Lz^2 + \dfrac{1}{\mu_2} u {1}_{\mathcal{O}_2}|^2) dxdt   \\ 
&+ \left.\iint_{\mathcal{O} \times (0,T)} \rho_1^{-2} |\varphi|^2dxdt \right)=0.
\end{align*}
Then $(u,z^1,z^2)=(0,0,0)$. This proves that $b(\cdot,\cdot)$ define a inner product in $\mathcal{X}_0$.\\ 
Now, let us define $\mathcal{X}$ the completion of $\mathcal{X}_0$ with this inner product, then $\mathcal{X}$ is a Banach space with norm induced by the inner product $b(\cdot,\cdot)$. Clearly $b(\cdot,\cdot)$ is a bilinear, symmetric, continuous and coercive application in $\mathcal{X}$.\\
Let us define the functional linear\ $\mathbb{G}: \mathcal{X} \rightarrow \mathbb{R}\ $ as
\begin{equation*}
<\mathbb{G},(u,z^1,z^2)>:= (y_0,u(0))+ \iint_Q (G u + {G}_1 z^1 + {G}_{2} z^2 )dxdt.
\end{equation*}
Let us see that $\mathbb{G}$ is continuous. Indeed,  if $(u,z^1,z^2) \in \mathcal{X}$, we have
\begin{align*}
|<\mathbb{G},(u,z^1,z^2)>| \leq  &|(y_0,u(0))| + \iint_Q (||G|u||{G}_1| |z^1|  + |{G}_{2}| |z^2|)dxdt\\
\leq &\|y_0\| \|u(0)\| + \left\{\iint_Q \rho^2  (|G|^2 + |{G}_1|^2 + |{G}_{2}|^2) dxdt \right\}^{\frac{1}{2}}\cdot\\
&\left\{ \iint_Q \rho^{-2} (|u|^2 + |z^1|^2 + |z^2|^2) dxdt  \right\}^{\frac{1}{2}} \\
\leq &\left\{\|y_0\|^2 + \iint_Q \rho^2 (|G|^2 + |{G}_1|^2 + |{G}_{2}|^2) dxdt \right\}^{\frac{1}{2}} \\
&\left\{ \|u(0)\|^2 + \iint_Q \rho^{-2} (|u|^2 + |z^1|^2 + |z^2|^2) dxdt\right\}^{\frac{1}{2}}\\
\leq &C\ b((u,z^1,z^2),(u,z^1,z^2))^{\frac{1}{2}} = C\ \|(u,z^1,z^2)\|_{_{\mathcal{X}}}.
\end{align*}
Then, for Lax-Milgram's Theorem, there exists an unique $(\hat{u},\hat{z}^1,\hat{z}^2)\in \mathcal{X}$ such that
\begin{equation}
b((\hat{u},\hat{z}^1,\hat{z}^2),(u,z^1,z^2))=<\mathbb{G},(u,z^1,z^2)>,\ \ \forall (u,z^1,z^2) \in \mathcal{X}.
\end{equation}
In other words,
\begin{align}
\label{eq33ch5}
\nonumber
\iint_Q &\rho_0^{-2} (L^*\hat{u} - \alpha_1 \hat{z}^1 {1}_{\mathcal{O}_{1,d}} - \alpha_2 \hat{z}^2 {1}_{\mathcal{O}_{2,d}})(L^*u - \alpha_1 z^1 {1}_{\mathcal{O}_{1,d}} - \alpha_2 z^2 {1}_{\mathcal{O}_{2,d}})dxdt\\
\nonumber
+ & \sum_{i=1}^2 \iint_Q \rho_0^{-2} (L\hat{z}^i + \dfrac{1}{\mu_i} \hat{u} {1}_{\mathcal{O}_i})(Lz^i + \dfrac{1}{\mu_i} u {1}_{\mathcal{O}_i})dxdt\\
\nonumber
+ &\iint_{\mathcal{O} \times (0,T)} \rho_1^{-2}\ \hat{u} u\ dxdt\\
=& (y_0,u(0))+ \iint_Q ({G} u + G_1 z^1 + {G}_{2} z^2 )dxdt.
\end{align}
As $(\hat{u},\hat{z}^1,\hat{z}^2) \in \mathcal{X}$, then
\begin{equation*}
\begin{cases}
&\rho_0^{-1} (L^*\hat{u} - \alpha_1 \hat{z}^1 {1}_{\mathcal{O}_{1,d}} - \alpha_2 \hat{z}^2 {1}_{\mathcal{O}_{2,d}})\in L^2(Q),\\
&\rho_0^{-1} (L\hat{z}^i + \dfrac{1}{\mu_i} \hat{u} {1}_{\mathcal{O}_i}) \in L^2(Q),\\
&\rho_1^{-1} \hat{u} \in L^2(Q). 
\end{cases}
\end{equation*}
We define
\begin{equation}
\label{eq34ch5}
\begin{cases}
\hat{y}:=\rho_0^{-2} (L^*\hat{u} - \alpha_1 \hat{z}^1 {1}_{\mathcal{O}_{1,d}} - \alpha_2 \hat{z}^2 {1}_{\mathcal{O}_{2,d}})\ \ &\text{in}\ \ Q, \\
\hat{p}^i := \rho_0^{-2} (L\hat{z}^i + \dfrac{1}{\mu_i} \hat{u} {1}_{\mathcal{O}_i})\ \ &\text{in}\ \ Q, \\
\hat{f}:=  -\rho_1^{-2}\ \hat{u}\ \ &\text{in}\ \ \mathcal{O} \times (0,T).
\end{cases}
\end{equation}
Replacing $(\ref{eq34ch5})$ in $(\ref{eq33ch5})$, we obtain
\begin{align*}
\iint_Q &\hat{y}(L^*u - \alpha_1 z^1 {1}_{\mathcal{O}_{1,d}} - \alpha_2 z^2 {1}_{\mathcal{O}_{2,d}})dxdt + \sum_{i=1}^2 \iint_Q \hat{p}^i(Lz^i + \dfrac{1}{\mu_i} u {1}_{\mathcal{O}_i})dxdt\\
=& (y_0,u(0))+ \iint_{\mathcal{O} \times (0,T)} \hat{y} u\ dxdt+ \iint_Q (G u {G}_1 z^1 + {G}_{2} z^2)dxdt
\end{align*}
this is,
\begin{align*}
\iint_Q &\hat{y} b dxdt + \sum_{i=1}^2 \iint_Q \hat{p}^i b_i dxdt\\
=& (y_0,u(0))+ \iint_{\mathcal{O} \times (0,T)} \hat{y} u\ dxdt+ \iint_Q (G u + {G}_1 z^1 + {G}_{2} z^2 )dxdt
\end{align*}
where $(u,z^1,z^2)$ is solution of the system
\begin{equation*}
\left|
\begin{array}{rl}
&L^*u = b +  \alpha_1 z^1 {1}_{\mathcal{O}_{1,d}} +  \alpha_2 z^2 {1}_{\mathcal{O}_{2,d}} \ \text{in} \ Q,\\
&Lz^i = b_i - \dfrac{1}{\mu_i} u {1}_{\mathcal{O}_i} \ \text{in} \ Q,\\
&u(0,t) = u(L,t) = 0, \ \ z^i(0,t) = z^i(L,t) =0, \ \ \text{ in } \ (0,T),\\
&u(T) = 0,\ \ z^i(0)=0, \ \ \text{ in } \ I.
\end{array}
\right.
\end{equation*}
Thus, $(\hat{y},\hat{p}^1,\hat{p}^2)$ is a solution by transposition of the problem
\begin{equation}
\left|
\begin{array}{rl}
&L\hat{y} = {G} + \hat{f} {1}_\mathcal{O} - \dfrac{1}{\mu_1} p^1 {1}_{\mathcal{O}_1} - \dfrac{1}{\mu_2} p^2 {1}_{\mathcal{O}_2} \ \text{in} \ Q,\\
&L^* \hat{p}^i = {G}_i +  \alpha_i \hat{y} {1}_{\mathcal{O}_{i,d}} \ \text{in} \ Q,\\
&\hat{y}(0,t)= \hat{y}(L,t) = 0, \ \ \hat{p}^i(0,t)=\hat{p}^i(L,t) =0, \ \ \text{ in } \ (0,T),\\
&\hat{y}(0) = y_{0},\ \ \hat{p}^i(T)=0, \ \ \text{ in } \ I.
\end{array}
\right.
\end{equation}
Since $G, {G}_1, {G}_2$ are regular, using  energy estimates, we have
$$\hat{y},\ \hat{p}^1,\hat{p}^2 \in L^\infty(0,T;H_0^1(I)) \cap L^2(0,T;H^2(I)).$$
Also,
\begin{align*}
\iint_Q \rho_0^2 |\hat{y}|^2 dxdt &= \iint_Q \rho_0^2 \rho_0^{-4} |L^*\hat{u} - \alpha_1 \hat{z}^1 {1}_{\mathcal{O}_{1,d}} - \alpha_2 \hat{z}^2 {1}_{\mathcal{O}_{2,d}}|^2 dxdt \\
&= \iint_Q \rho_0^{-2} |L^*\hat{u} - \alpha_1 \hat{z}^1 {1}_{\mathcal{O}_{1,d}} - \alpha_2 \hat{z}^2 {1}_{\mathcal{O}_{2,d}}|^2 dxdt<+\infty,\\
\iint_Q \rho_0^2 |\hat{p}^i|^2 dxdt &= \iint_Q \rho_0^2 \rho_0^{-4} |L\hat{z}^i + \dfrac{1}{\mu_i} \hat{u} {1}_{\mathcal{O}_i}|^2 dxdt\\
&= \iint_Q \rho_0^{-2} |L\hat{z}^i + \dfrac{1}{\mu_i} \hat{u} {1}_{\mathcal{O}_i}|^2 dxdt<+\infty,\\
\iint_{\mathcal{O}\times(0,T)} \rho_1^2 |\hat{f}|^2 dxdt &= \iint_{\mathcal{O}\times(0,T)} \rho_1^2 \rho_1^{-4} |\hat{u}|^2 dxdt = \iint_{\mathcal{O}\times(0,T)} \rho_1^{-2} |\hat{u}|^2 dxdt<+\infty.
\end{align*}
And from $(\ref{eq34ch5})$, we have
\begin{equation*}
\left|
\begin{array}{rl}
&L^*\hat{w} = H +  \alpha_1 {\hat{h}}^1 {1}_{\mathcal{O}_{1,d}} +  \alpha_2 {\hat{h}}^2 {1}_{\mathcal{O}_{2,d}} \ \text{in} \ Q,\\
&L\hat{h}^i = H_i - \dfrac{1}{\mu_i} \hat{w} {1}_{\mathcal{O}_i} \ \text{in} \ Q,\\
&\hat{w}(0,t)=\hat{w}(L,t) = 0, \ \ {\hat{h}}^i(0,t)=\hat{h}^i(L,t) =0, \ \ \text{ in } \ (0,T),\\
&\hat{w}(T) = 0,\ \ \hat{h}^i(0)=0, \ \ \text{ in } \ I,
\end{array}
\right.
\end{equation*}
where $\hat{w}:= \rho_3 \rho_1^{-2} \hat{u}$, $\hat{h}^i:= \rho_3 \rho_1^{-2} \hat{z}^i$, $H:= \rho_3 \rho_1^{-2} (L^* \hat{u} - \alpha_1 {\hat{z}}^1 {1}_{\mathcal{O}_{1,d}} - \alpha_2 {\hat{z}}^2 {1}_{\mathcal{O}_{2,d}}) + (\rho_3 \rho_1^{-2})_t \hat{u}$ and $H_i:= \rho_3 \rho_1^{-2} (L \hat{z}^i + \frac{1}{\mu_i} \hat{u} 1_{\mathcal{O}_i}) + (\rho_3 \rho_1^{-2})_t \hat{z}^i$.

Using $(\ref{eq29ch5})$, we get
\begin{align*}
\iint_Q |H|^2 dxdt &\leq C \left(\iint_Q \rho_0^{-2} |L^* \hat{u} - \alpha_1 {\hat{z}}^1 {1}_{\mathcal{O}_{1,d}} - \alpha_2 {\hat{z}}^2 {1}_{\mathcal{O}_{2,d}}|^2 dxdt + \iint_Q \rho^{-2} |\hat{u}|^2 dxdt\right)\\
&\leq C b((\hat{u},\hat{z}^1,\hat{z}^2),(\hat{u},\hat{z}^1,\hat{z}^2))
\end{align*}
and
\begin{align*}
\iint_Q |H_i|^2 dxdt &\leq C \left(\iint_Q \rho_0^{-2} |L \hat{z}^i + \frac{1}{\mu_i} \hat{u} 1_{\mathcal{O}_i}|^2dxdt + \iint_Q \rho^{-2} |\hat{z}^i|^2 dxdt\right)\\
&\leq C b((\hat{u},\hat{z}^1,\hat{z}^2),(\hat{u},\hat{z}^1,\hat{z}^2))
\end{align*}
then
\begin{equation}
\label{eq35ch5}
\rho_3 f \in L^2(0,T; H_0^1(I) \cap H^2(I)), (\rho_3 f)_t \in L^2(Q).
\end{equation}
Furthermore, we have
\begin{align}
\label{control-estimatech5}
\|\rho_3 f\|_{L^2(0,T;H_0^1(I))}^2 + \|(\rho_3)_t f\|_{L^2(Q)}^2 \leq& C\ b((\hat{u},\hat{z}^1,\hat{z}^2),(\hat{u},\hat{z}^1,\hat{z}^2)) \nonumber\\
:= &C \left( \iint_Q \rho_0^2 |\hat{y}|^2 dxdt + \sum_{i=1}^2 \iint_Q \rho_0^2 |\hat{p}^i|^2 dxdt \right. \nonumber\\
&+ \left. \iint_{\mathcal{O} \times (0,T)} \rho_1^2 |\hat{f}|^2 dxdt \right) 
\end{align}
Note that, from $(\ref{eq35ch5})$ one has $f_x \in C([0,T-\delta],L^2(I))$.
\end{proof}

\subsection{Additional Estimates}
From $(\ref{eq29ch5})$, we have
\begin{equation}
\label{eq30ch5}
\begin{aligned}
&{\rho}_{i} \leq C \rho_{i-1} \ \ \forall i \in \{\ 1,2,3,4,5\},\\
&{\rho}_0 \leq C \rho \leq C \rho_5^2\\
& |\rho_i \rho_{i,t}| \leq C |\rho_{i-1}|^2, \ \ \forall i \in \{\ 2,3,4,5\}\ .
\end{aligned}
\end{equation}

\begin{propposition}
\label{prop5ch5}
Let the hypotheses in Proposition $\ref{prop3ch5}$ be satisfied and let $f$ and $(y,p^1,p^2)$ satisfy $(\ref{eq31ch5})$. Then one has
\begin{equation}
\label{eq37ch5}
\begin{aligned}
&\underset{[0,T]}{\text{sup}} (\rho_2^2(t) \|y(t)\|^2) + \underset{[0,T]}{\text{sup}} ({\rho}_2^2(t) \|p^i(t)\|^2) + \iint_Q {\rho}_2^2(|y_x|^2 + |p^i_x|^2) dxdt\\
&+\underset{[0,T]}{\text{sup}} (\rho_3^2(t) \|y_x(t)\|^2) + \underset{[0,T]}{\text{sup}} (\rho_3^2(t) \|p_x^i(t)\|^2) + \iint_Q \rho_3^2(|y_{t}|^2 + |y_{xx}|^2 + |p_t^i|^2 + |p_{xx}^i|^2)dxdt \\
\leq &C \left(\|y_0\|^2 + \iint_Q \rho^2 |G|^2 dxdt + \sum_{i=1}^2 \iint_Q \rho^2 |{G}_i|^2 dxdt \right.\\
&+ \left. \iint_{\mathcal{O}\times(0,T)} \rho_1^2 |f|^2 dxdt + \sum_{i=1}^2 \iint_Q \rho_0^2 |p^i|^2 dxdt + \iint_Q \rho_0^2 |y|^2 dxdt \right).
\end{aligned}
\end{equation}
\end{propposition}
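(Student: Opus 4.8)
The plan is to prove (\ref{eq37ch5}) by weighted parabolic energy estimates, using as multipliers $\rho_2^2$ times the solution and $\rho_3^2$ times its time derivative, and to absorb every term produced by the weights by means of the relations (\ref{eq30ch5}) together with the zeroth-order bounds (\ref{eq31ch5}) already furnished by Proposition \ref{prop3ch5}. A crucial simplification, which I would record first, is that the weights $\rho,\rho_0,\dots,\rho_5$ in (\ref{eq29ch5}) depend on $t$ only, since they are built from $\overline\beta^*(t)$ and $\overline\xi^*(t)$; consequently every spatial integration by parts is free of weight cross terms.

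First I would establish the first line of (\ref{eq37ch5}). Multiply the $y$-equation in (\ref{eq18ch5}) by $\rho_2^2 y$ and integrate over $I$; integrating by parts in $x$ and using the ellipticity $a(0,t,x)\ge a_0$ produces the coercive term $a_0\int_I\rho_2^2|y_x|^2$, while the time derivative of the weight yields $\int_I \rho_2\rho_{2,t}|y|^2$, which by (\ref{eq30ch5}) satisfies $|\rho_2\rho_{2,t}|\le C\rho_1^2\le C\rho_0^2$ and is therefore dominated by the admissible term $\iint_Q\rho_0^2|y|^2$. The first-order term $D_2F(0,0)y_x$ and the source terms are treated with Young's inequality, absorbing a fraction of $\int_I\rho_2^2|y_x|^2$ on the left and exploiting the ordering $\rho_2\le C\rho_1\le C\rho_0\le C\rho$ from (\ref{eq30ch5}), so that $G$, $f$, $p^1$, $p^2$ appear with precisely the weights $\rho,\rho_1,\rho_0$ of the right-hand side of (\ref{eq37ch5}). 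Integrating in $t$ (forward, using $y(0)=y_0$) gives $\sup_{[0,T]}\rho_2^2\|y\|^2$ and $\iint_Q\rho_2^2|y_x|^2$. The identical argument applied to each $p^i$-equation — now run backward from the terminal condition $p^i(T)=0$, so that no initial datum enters — yields the corresponding bounds for $p^i$; the coupling terms $\tfrac1{\mu_i}p^i$ and $\alpha_i y$ are simply thrown onto the admissible $\rho_0$-weighted right-hand side, so the two estimates need not be coupled.

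Next I would obtain the second line of (\ref{eq37ch5}) by multiplying the $y$-equation by $\rho_3^2 y_t$. Because the weight is $x$-independent, integration by parts in $x$ gives $-\int_I(a y_x)_x\rho_3^2 y_t = \tfrac12\frac{d}{dt}\int_I a\rho_3^2|y_x|^2 - \tfrac12\int_I (a\rho_3^2)_t|y_x|^2$, and $(a\rho_3^2)_t=a_t\rho_3^2+2a\rho_3\rho_{3,t}$ is bounded by $C\rho_2^2$ using $|a_t|\le M$, $a\le a_1$ and $|\rho_3\rho_{3,t}|\le C\rho_2^2$ from (\ref{eq30ch5}); hence this term is controlled by the $\iint_Q\rho_2^2|y_x|^2$ already bounded in the first step. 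This leaves $\int_I\rho_3^2|y_t|^2$ on the left, the lower-order and source terms being absorbed by Young against a fraction of it, again with weights $\le\rho,\rho_1,\rho_0$. Integrating in $t$ produces $\sup_{[0,T]}\rho_3^2\|y_x\|^2$ and $\iint_Q\rho_3^2|y_t|^2$; the initial contribution $\rho_3^2(0)\|y_{0,x}\|^2$ is finite, since $\rho_3(0)$ is a constant (as $l\equiv T^2/4$ near $t=0$), and is bounded by $C\|y_0\|_{H^1(I)}^2\le C\|y_0\|_{H^3(I)}^2$, consistent with the hypothesis on $y_0$. Finally the bound on $\iint_Q\rho_3^2|y_{xx}|^2$ follows algebraically: solving $a\,y_{xx}=y_t+D_1F\,y+D_2F\,y_x-a_x y_x-(\text{source})$ and using $a\ge a_0$ together with the boundedness of $a_x=D_3a$ and of the derivatives of $F$ reduces $\rho_3^2|y_{xx}|^2$ to already-controlled quantities. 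The same computation on the $p^i$-equations (backward, terminal term vanishing) closes the estimate.

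The main obstacle I anticipate is the bookkeeping of weight ratios: one must verify that every factor generated by differentiating the weights ($\rho_j\rho_{j,t}$) or the coefficient ($a_t$, $a_x$) is dominated by one of $\rho,\rho_0,\rho_1$, so that it lands in the admissible right-hand side, and that after Young's inequality the genuinely higher-order left-hand terms $\rho_2^2|y_x|^2$ and $\rho_3^2|y_t|^2$ are not consumed; the relations (\ref{eq30ch5}) are exactly what make this possible. A secondary point requiring care is the justification of the formal multiplier computations at the singular endpoint $t=T$, where the weights blow up while $y,p^i$ and their derivatives decay; this is handled rigorously by performing the estimates on $(0,T-\epsilon)$ (or via a Galerkin approximation) and letting $\epsilon\to0$, the finiteness of the limit being guaranteed precisely by (\ref{eq31ch5}) and the terminal conditions.
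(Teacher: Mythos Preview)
Your proposal is correct and follows essentially the same approach as the paper: the paper likewise multiplies $(\ref{eq18ch5})_1$ by $\rho_2^2 y$ and $(\ref{eq18ch5})_2$ by $\rho_2^2 p^i$ for the first-level estimates, then by $\rho_3^2 y_t$ (resp.\ $\rho_3^2 p^i_t$) for the second level, using exactly the weight relations $(\ref{eq30ch5})$ you invoke. The only cosmetic difference is that the paper recovers the $\rho_3^2|y_{xx}|^2$ bound by multiplying the equation by $-\rho_3^2 y_{xx}$ and integrating, rather than by your equivalent pointwise algebraic argument; note also that, as you observe, the term $\rho_3^2(0)\|y_{0,x}\|^2$ really requires $\|y_0\|_{H_0^1(I)}^2$ on the right-hand side (which is indeed what appears in the paper's intermediate estimates and is consistent with the standing hypothesis $y_0\in H^3(I)\cap H_0^1(I)$).
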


\begin{proof}
Multiplying by $\rho^2_2 y$ the equation $(\ref{eq18ch5})_1$ and integrating in $I$, we have
\begin{align*}
\frac{1}{2} \frac{d}{dt} &(\rho_2^2 \|y(t)\|^2) + \frac{a_0}{2} \int_I \rho_2^2 |y_x|^2 dx \\
&\leq C \left(\int_I \rho_2^2 |G|^2 dx + \int_{\mathcal{O}} \rho_2^2 |f|^2 dx + \int_I \rho_2^2 |y|^2 dx + \sum_{i=1}^2 \int_I \rho_2^2 |p^i|^2 dx \right) + \int_I \rho_{2,t} \rho_2 |y|^2 dx\\
&\leq C \left(\int_I \rho^2 |G|^2 dx + \int_{\mathcal{O}} \rho_1^2 |f|^2 dx + \int_I \rho_0^2 |y|^2 dx + \sum_{i=1}^2 \int_I \rho_0^2 |p^i|^2 dx \right).   
\end{align*}
Integrating from $0$ to $t$, we have
\begin{align}
\label{estim-ad1ch5}
\underset{[0,T]}{\text{sup}} (\rho_2^2 \|y(t)\|^2) + \iint_Q \rho_2^2 |y_x|^2 dxdt &\leq C \left(\|y_0\|^2 + \iint_Q \rho^2 |G|^2 dxdt + \iint_{\mathcal{O} \times (0,T)} \rho_1^2 |f|^2 dxdt \right. \nonumber\\
&+ \left. \iint_Q \rho_0^2 |y|^2 dxdt + \sum_{i=1}^2 \iint_Q \rho_0^2 |p^i|^2 dxdt \right).
\end{align}
Analogously, multiplying $\rho_2^2 p^i$ to the equation $(\ref{eq18ch5})_2$ and integrating in $Q$, we have 
\begin{align}
\label{estim-ad2ch5}
\underset{[0,T]}{\text{sup}} (\rho_2^2 \|p^i(t)\|^2) &+ \iint_Q \rho_2^2 |p^i_x|^2 dxdt \nonumber\\
&\leq C \left(\iint_Q \rho^2 |G_i|^2 dxdt + \iint_Q \rho_0^2 |y|^2 dxdt + \iint_Q \rho_0^2 |p^i|^2 dxdt \right).
\end{align}
Now, multiplying by $\rho_3^2 y_t$ the equation $(\ref{eq18ch5})_1$ and integrating in $I$, we have
\begin{align*}
\frac{1}{2} & \frac{d}{dt} \left(\int_I a(0,t,x)\rho_3^2 |y_x|^2 dx \right) +  \int_I \rho_3^2 |y_t|^2 dx \leq  \int_I a(0,t,x) \rho_{3,t} \rho_3 |y_x|^2 dx + \int_I D_2 a(0,t,x) \rho_3^2 |y_x|^2 dx \\
&+ \epsilon \int_I \rho_3^2 |y_t|^2 dx + C_\epsilon \left(\int_I \rho_3^2 |G|^2 dx + \int_{\mathcal{O}} \rho_3^2 |f|^2 dx + \int_I \rho_3^2 |y|^2 dx + \int_I \rho_3^2 |y_x|^2 dx + \sum_{i=1}^2 \int_I \rho_3^2 |p^i|^2 dx \right).
\end{align*}
This is
\begin{align*}
\frac{d}{dt} &\left(\int_I a(0,t,x)\rho_3^2 |y_x|^2 dx \right) +  \int_I \rho_3^2 |y_t|^2 dx \\
&\leq C \left(\int_I \rho^2 |G|^2 dx + \int_{\mathcal{O}} \rho_1^2 |f|^2 dx + \int_I \rho_0^2 |y|^2 dx + \sum_{i=1}^2 \int_I \rho_0^2 |p^i|^2 dx + \int_I \rho_2^2 |y_x|^2 dx\right).
\end{align*}
Integrating from $0$ to $t$ and using $(\ref{estim-ad1ch5})$, we have
\begin{align}
\label{estim-ad3ch5}
\underset{[0,T]}{\text{sup}} (\rho_3^2(t) \|y_x(t)\|^2) + \iint_Q \rho_3^2 |y_t|^2 dxdt &\leq C \left(\|y_0\|_{H_0^1(I)}^2 + \iint_Q \rho^2 |G|^2 dxdt + \iint_{\mathcal{O} \times (0,T)} \rho_1^2 |f|^2 dxdt \right. \nonumber\\
&+ \left. \iint_Q \rho_0^2 |y|^2 dxdt + \sum_{i=1}^2 \iint_Q \rho_0^2 |p^i|^2 dxdt \right).
\end{align}
Now, multiplying by $-\rho_3^2 y_{xx}$ the equation $(\ref{eq18ch5})_1$ and integrating in $I$, we have
\begin{align*}
\int_I a(0,t,x) \rho_3^2 |y_{xx}|^2 dx &\leq - \int_I D_3 a(0,t,x) \rho_3^2 y_x y_{xx}dx + \epsilon \int_I \rho_3^2 |y_{xx}|^2 dx \\
&+ C_\epsilon \left(\int_I \rho_3^2 |G|^2 dx + \int_\mathcal{O} \rho_3^2 |f|^2 dx + \int_I \rho_3^2 |y_t|^2dx \right.\\
&+ \left. \int_I \rho_3^2|y|^2 dx + \int_I \rho_3^2|y_x|^2 dx + \sum_{i=1}^2 \int_I \rho_3^2 |p^i|^2 dx \right).
\end{align*}
then
\begin{align*}
\int_I \rho_3^2 |y_{xx}|^2 dx &\leq  C \left(\int_I \rho^2 |G|^2 dx + \int_\mathcal{O} \rho_1^2 |f|^2 dx + \int_I \rho_3^2 |y_t|^2dx \right.\\
&+ \left. \int_I \rho_0^2|y|^2 dx + \int_I \rho_2^2|y_x|^2 dx + \sum_{i=1}^2 \int_I \rho_0^2 |p^i|^2 dx \right).
\end{align*}
Integrating from $0$ to $t$ and using $(\ref{estim-ad1ch5})$ and $(\ref{estim-ad3ch5})$, we get
\begin{align}
\label{estim-ad4ch5}
\iint_Q \rho_3^2 |y_{xx}|^2 dxdt &\leq C \left(\|y_0\|_{H_0^1(I)}^2 + \iint_Q \rho^2 |G|^2 dxdt + \iint_{\mathcal{O} \times (0,T)} \rho_1^2 |f|^2 dxdt \right. \nonumber\\
&+ \left. \iint_Q \rho_0^2|y|^2 dxdt + \sum_{i=1}^2 \iint_Q \rho_0^2 |p^i|^2 dxdt \right)
\end{align}
Analogously, multiplying $\rho_3^2 p^i_t$ to the equation $(\ref{eq18ch5})_2$ and integrating in $Q$, we have 
\begin{align}
\label{estim-ad5ch5}
\underset{[0,T]}{\text{sup}} (\rho_3^2 \|p^i_x(t)\|^2) &+ \iint_Q \rho_3^2 |p^i_t|^2 dxdt \nonumber\\
&\leq C \left(\iint_Q \rho^2 |G_i|^2 dxdt + \iint_Q \rho_0^2 |y|^2 dxdt + \iint_Q \rho_0^2 |p^i|^2 dxdt \right)
\end{align}
and also multiplying $-\rho_3^2 p^i_{xx}$ to the equation
$(\ref{eq18ch5})_2$ and integrating in $Q$, we have
\begin{align}
\label{estim-ad6ch5}
\iint_Q \rho_3^2 |p^i_{xx}|^2 dxdt &\leq C \left(\iint_Q \rho^2 |G_i|^2 dxdt + \iint_Q \rho_0^2 |y|^2 dxdt + \iint_Q \rho_0^2 |p^i|^2 dxdt \right)
\end{align}
From $(\ref{estim-ad1ch5})-(\ref{estim-ad6ch5})$, we have $(\ref{eq37ch5})$.
\end{proof}

\begin{propposition}
\label{prop6ch5}
Let the hypotheses in Proposition $\ref{prop3ch5}$ be satisfied and let $f$ and $(y,p^1,p^2)$ satisfy $(\ref{eq31ch5})$. Then one has
\begin{equation}
\label{eq38ch5}
\begin{aligned}
&\underset{[0,T]}{\text{sup}} ({\rho}_4^2(t) \|y_t(t)\|^2) + \iint_Q \rho_4^2 |y_{xt}|^2 dxdt \\
+&\underset{[0,T]}{\text{sup}} ({\rho}_5^2(t) \|y_{xt}(t)\|^2) + \iint_Q {\rho}_5^2 (|y_{tt}|^2 + |y_{xxt}|^2) dxdt +\underset{[0,T]}{\text{sup}} ({\rho}_5^2(t) \|y_{xx}(t)\|^2) \\
\leq &C \left(\|y_0\|_{H^3_0(I)}^2 + \|G(0)\|_{H_0^1(I)}^2 + \iint_Q \rho^2 |G|^2 dxdt + \iint_Q \rho_3^2 |G_t|^2 dxdt\right.\\
&+ \sum_{i=1}^2 \iint_Q \rho^2 |{G}_i|^2 dxdt  + \iint_{\mathcal{O}\times(0,T)} \rho_1^2 |f|^2 dxdt + \left. \sum_{i=1}^2 \iint_Q \rho_0^2 |p^i|^2 dxdt + \iint_Q \rho_0^2 |y|^2 dxdt \right).
\end{aligned}
\end{equation}
\end{propposition}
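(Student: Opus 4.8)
The plan is to obtain the higher-order weighted estimates exactly as in Proposition \ref{prop5ch5}, but after differentiating the state equation $(\ref{eq18ch5})_1$ once in time. Writing $a=a(0,t,x)$ and setting $z:=y_t$, differentiation of $(\ref{eq18ch5})_1$ in $t$ gives
\begin{equation*}
z_t - (a z_x)_x + D_1F(0,0)z + D_2F(0,0)z_x = G_t + f_t 1_{\mathcal{O}} - \frac{1}{\mu_1}p^1_t 1_{\mathcal{O}_1} - \frac{1}{\mu_2}p^2_t 1_{\mathcal{O}_2} + (D_2 a(0,t,x)\, y_x)_x,
\end{equation*}
where the last term records the time dependence of the diffusion coefficient. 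Since Proposition \ref{prop5ch5} already controls the quantities built from $y,y_x,y_{xx},p^i,p^i_x$ through the weights $\rho_2,\rho_3$, and since $\rho_3 G_t\in L^2(Q)$ by hypothesis, the source of the $z$-equation is under control in the relevant weighted spaces.

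First I would multiply the $z$-equation by $\rho_4^2 z$ and integrate over $I$; the standard computation (Young's inequality, the relation $|\rho_4\rho_{4,t}|\le C\rho_3^2$ from $(\ref{eq30ch5})$ to absorb the weight derivative, and $\rho_4\le C\rho_3\le C\rho_2$) yields, after integrating in time, the bound for $\sup_{[0,T]}\rho_4^2\|y_t\|^2 + \iint_Q\rho_4^2|y_{xt}|^2$; the term $(D_2a\,y_x)_x$ is integrated by parts and absorbed into $\epsilon\iint_Q\rho_4^2|y_{xt}|^2$ plus $\iint_Q\rho_2^2|y_x|^2$, the latter bounded by $(\ref{eq37ch5})$. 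Next, multiplying by $\rho_5^2 y_{tt}$ and by $-\rho_5^2 y_{xxt}$ and integrating, exactly as in the derivations of $(\ref{estim-ad3ch5})$ and $(\ref{estim-ad4ch5})$, produces $\sup_{[0,T]}\rho_5^2\|y_{xt}\|^2 + \iint_Q\rho_5^2|y_{tt}|^2$ and then $\iint_Q\rho_5^2|y_{xxt}|^2$, using $\rho_5\le C\rho_4$ and the previous step. Finally, $\sup_{[0,T]}\rho_5^2\|y_{xx}\|^2$ follows directly from $(\ref{eq18ch5})_1$ read pointwise in $x$ as $a\,y_{xx} = y_t - D_3a\,y_x + (\text{lower order}) - f1_{\mathcal{O}} + \sum_i\tfrac{1}{\mu_i}p^i 1_{\mathcal{O}_i} - G$, by taking the $L^2(I)$ norm, multiplying by $\rho_5^2$, and invoking the already established $\sup_{[0,T]}\rho_4^2\|y_t\|^2$ together with the lower-order sup-estimates of Proposition \ref{prop5ch5} (via $\rho_5\le C\rho_4\le C\rho_3\le C\rho_2\le C\rho$).

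The delicate point, and the step I expect to be the main obstacle, is the treatment of the initial traces produced when integrating from $0$ to $t$, namely $\rho_4^2(0)\|y_t(0)\|^2$ and $\rho_5^2(0)\|y_{xt}(0)\|^2$. All weights in $(\ref{eq29ch5})$ are finite and positive at $t=0$, so these reduce to controlling $\|y_t(0)\|$ and $\|y_{xt}(0)\|$. Evaluating $(\ref{eq18ch5})_1$ at $t=0$ gives
\begin{equation*}
y_t(0) = (a(0,0,x)y_{0,x})_x - D_1F(0,0)y_0 - D_2F(0,0)y_{0,x} + f(0)1_{\mathcal{O}} - \sum_{i}\tfrac{1}{\mu_i}p^i(0)1_{\mathcal{O}_i} + G(0),
\end{equation*}
and differentiating once more in $x$ to reach $y_{xt}(0)$ brings in $y_{0,xxx}$ and $G_x(0)$, which is precisely why $y_0\in H^3(I)\cap H^1_0(I)$ and $G(0)\in H^1_0(I)$ appear on the right of $(\ref{eq38ch5})$. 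The contributions of $f$ and $p^i$ at $t=0$ must still be absorbed into the right-hand side of $(\ref{eq38ch5})$: for the control this uses $\rho_3 f\in L^2(0,T;H_0^1(I)\cap H^2(I))$ with $(\rho_3 f)_t\in L^2(Q)$ from $(\ref{eq35ch5})$, whence $\rho_3 f\in C([0,T];H^1_0(I))$ and $\|f(0)\|_{H^1}$ is bounded by $(\ref{control-estimatech5})$; for the adjoint states one evaluates the sup-bounds $\sup_{[0,T]}\rho_2^2\|p^i\|^2$ and $\sup_{[0,T]}\rho_3^2\|p^i_x\|^2$ of Proposition \ref{prop5ch5} at $t=0$. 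Collecting the three multiplier estimates and the elliptic estimate for $y_{xx}$, and absorbing every weight-derivative cross term through $(\ref{eq30ch5})$, yields $(\ref{eq38ch5})$.
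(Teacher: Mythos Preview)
Your proof follows the same multiplier strategy as the paper: differentiate $(\ref{eq18ch5})_1$ in time, then test successively with $\rho_4^2 y_t$, $\rho_5^2 y_{tt}$, and $-\rho_5^2 y_{xxt}$, handling the initial traces $\|y_t(0)\|$, $\|y_{xt}(0)\|$ via the equation at $t=0$ and the regularity of $f$ from $(\ref{eq35ch5})$--$(\ref{control-estimatech5})$, exactly as you describe.

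The only deviation is your final step for $\sup_{[0,T]}\rho_5^2\|y_{xx}\|^2$. The paper obtains this by multiplying the \emph{undifferentiated} equation $(\ref{eq18ch5})_1$ by $-\rho_5^2 y_{xxt}$ and integrating in space--time, which produces $\tfrac{d}{dt}\bigl(\int a\,\rho_5^2|y_{xx}|^2\bigr)$ plus terms that are bounded by $L^2$-in-time quantities already controlled (in particular $\iint_Q\rho_5^2|G|^2\le\iint_Q\rho^2|G|^2$). Your pointwise elliptic route---solving for $a\,y_{xx}$, taking $L^2(I)$-norms, and passing to the supremum in $t$---is correct in spirit but requires sup-in-time control of $\rho_5^2\|f(t)\|^2$ and $\rho_5^2\|G(t)\|^2$, which you do not address. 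For $f$ this follows from $\rho_3 f\in C([0,T];L^2(I))$ via $(\ref{eq35ch5})$; for $G$ one must argue separately that $\rho_5 G\in H^1(0,T;L^2(I))$ (using $|\rho_{5,t}|\le C\rho_4^2/\rho_5$ and the weight comparisons), so that $\sup_{[0,T]}\rho_5^2\|G\|^2$ is bounded by $\|G(0)\|^2+\iint_Q\rho^2|G|^2+\iint_Q\rho_3^2|G_t|^2$. This is a fillable gap, but the paper's energy-multiplier choice avoids it entirely.
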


\begin{proof}
We know that 
\begin{equation}
\label{derivative_tch5}
y_{tt} - (D_2 a(0,t,x) y_x + a(0,t,x) y_{xt})_x + D_1F(0,0) y_t + D_2F(0,0) y_{xt} = f_t 1_{\mathcal{O}} - \frac{1}{\mu_1} p^1_t 1_{\mathcal{O}_1} - \frac{1}{\mu_2} p^2_t 1_{\mathcal{O}_2} + G_{t}.
\end{equation}
From $(\ref{derivative_tch5})$ multiplying by ${\rho}_4^2 y_t$ and integrating in $I$, we have
\begin{align*}
\frac{1}{2}\frac{d}{dt} &\left({\rho}_4^2(t) \|y_t(t)\|^2 dx\right) + \int_I a(0,t,x){\rho}_4^2 |y_{xt}|^2 dx \leq - \int_I D_2 a(0,t,x) \rho_4^2 y_x y_{xt} dx + \int_I \rho_4 \rho_{4,t} |y_t|^2 dx\\
&+ C \left(\int_I {\rho}_4^2 |G_{t}|^2 dx  + \int_\mathcal{O} {\rho}_4^2 |f_t|^2 dx  + \int_I {\rho}_4^2 |y_t|^2 dx + \sum_{i=1}^2 \int_I {\rho}_4^2 |p^i_t|^2 dx  \right).
\end{align*}
Integrating from $0$ to $t$ and using $(\ref{control-estimatech5})$ and $(\ref{eq37ch5})$, we have
\begin{equation}
\label{ad1ch5}
\begin{aligned}
\underset{[0,T]}{\text{sup}} ({\rho}_4^2(t) \|y_t(t)\|^2  ) &+ \iint_Q {\rho}_4^2 |y_{xt}|^2 dxdt \leq C \left( \|y_t(0)\|^2 + \iint_Q \rho_3^2 |G_t|^2 dxdt  \right.\\
&+ \iint_{\mathcal{O}\times (0,T)} {\rho}_1^2 |f|^2 dxdt  + \iint_Q {\rho}_0^2 |y|^2 dxdt + \left.\sum_{i=1}^2 \iint_Q {\rho}_0^2 |p^i|^2 dxdt \right).
\end{aligned}
\end{equation}
We get easily that
$$\|y_t(0)\| \leq C( \|y(0)\|_{H^2(I)} + \|f(0)\|_{L^2(\mathcal{O})} + \|p^1(0)\| + \|p^2(0)\| + \|G(0)\|).$$
Since $\rho_3 f 1_\mathcal{O}$,$\rho_3 G \in H^1(0,T;L^2(I))$, in $(\ref{ad1ch5})$ one has
\begin{equation}
\label{estim-ad7ch5}
\begin{aligned}
\underset{[0,T]}{\text{sup}} ({\rho}_4^2(t) \|y_t(t)\|^2  ) &+ \iint_Q {\rho}_4^2 |y_{xt}|^2 dxdt \leq C \left( \|y_0\|_{H^2(I)}^2 + \iint_Q \rho^2 |G|^2 dxdt   \right.\\
&+ \iint_Q \rho_3^2 |G_t|^2 dxdt + \sum_{i=1}^2 \iint_Q \rho^2 |G_i|^2 dxdt + \iint_{\mathcal{O}\times (0,T)} {\rho}_1^2 |f|^2 dxdt \\
&+ \iint_Q {\rho}_0^2 |y|^2 dxdt + \left.\sum_{i=1}^2 \iint_Q {\rho}_0^2 |p^i|^2 dxdt \right).
\end{aligned}
\end{equation}
From $(\ref{derivative_tch5})$ multiplying by ${\rho}_5^2 y_{tt}$ and integrating in $I$, we have
\begin{align*}
\int_I {\rho}_5^2 |y_{tt}|^2 dx + \frac{1}{2}\frac{d}{dt} \left(\int_I a(0,t,x){\rho}_5^2 |y_{xt}|^2 dx \right) &\leq \int_I a(0,t,x) {\rho}_5 {\rho}_{5,t} |y_{xt}|^2 dx + \frac{1}{2} \int_I D_2 a(0,t,x) \rho_5^2 |y_{xt}|^2 dx\\
&+ \epsilon \int_I {\rho}_5^2 |y_{tt}|^2 dx + C_\epsilon \left( \int_I {\rho}_5^2 |G_{t}|^2 dx +  \int_\mathcal{O} {\rho}_5^2 |f_t|^2 dx \right.\\
&\left. + \int_I \rho_5^2 |y_t|^2 dx + \int_I \rho_5^2 |y_{xt}|^2 dx + \sum_{i=1}^2 \int_I {\rho}_5^2 |p_t^i|^2 dx  \right)
\end{align*}
Integrating from $0$ to $t$, using $(\ref{control-estimatech5})$, $(\ref{estim-ad3ch5})$, $(\ref{estim-ad5ch5})$ and $(\ref{estim-ad7ch5})$ we deduce
\begin{align*}
\iint_Q {\rho}_5^2 |y_{tt}|^2 dxdt &+ \underset{[0,T]}{\text{sup}}\left({\rho}_5^2(t) \|y_{xt}(t)\|^2 \right) \leq C \left( \|y_{xt}(0)\|^2 + \iint_Q {\rho}_3^2 |G_{t}|^2 dxdt \right.\\
&+ \iint_{\mathcal{O}\times (0,T)} {\rho}_1^2 |f|^2 dxdt + \iint_Q {\rho}_0^2 |y|^2 dxdt   +\left. \sum_{i=1}^2 \iint_Q {\rho}_0^2 |p^i|^2 dxdt  \right).
\end{align*}
We see easily that
$$\|y_{x,t}(0)\| \leq C( \|y_0\|_{H^3(I)} + \|f_x(0)\|_{L^2(\mathcal{O})} + \|p^1_x(0)\| + \|p^2_x(0)\| + \|G_x(0)\|).$$
Then using $(\ref{estim-ad5ch5})$ and $(\ref{control-estimatech5})$, we deduce
\begin{equation}
\label{estim-ad8ch5}
\begin{aligned}
\iint_Q {\rho}_5^2 |y_{tt}|^2 dxdt &+ \underset{[0,T]}{\text{sup}} \left({\rho}_5^2(t) \|y_{xt}(t)\|^2 \right)  \leq C \left(\|y_0\|^2_{H^3(I)} + \|G(0)\|_{H_0^1(I)}^2  \right.\\
&+ \iint_Q {\rho}_3^2 |G_{t}|^2 dxdt + \sum_{i=1}^2 \iint_Q \rho^2 |G_i|^2 dxdt   + \iint_Q {\rho}_1^2 |f|^2 dxdt \\
 & + \iint_Q {\rho}_0^2 |y|^2 dxdt +\left. \sum_{i=1}^2 \iint_Q {\rho}_0^2 |p^i|^2 dxdt  \right).
\end{aligned}
\end{equation}
Analogously from $(\ref{derivative_tch5})$ multiplying by $-{\rho}_5^2 y_{xxt}$ and integrating in $Q$, we have
\begin{equation}
\label{estim-ad9ch5}
\begin{aligned}
\underset{[0,T]}{\text{sup}} \left({\rho}_5^2(t) \|y_{xt}(t)\|^2 \right) &+ \iint_Q {\rho}_5^2 |y_{xxt}|^2 dxdt \leq C \left(\|y_0\|^2_{H^3(I)} + \|G(0)\|_{H_0^1(I)}^2  \right.\\
&+ \iint_Q {\rho}_3^2 |G_{t}|^2 dxdt + \sum_{i=1}^2 \iint_Q \rho^2 |G_i|^2 dxdt   + \iint_Q {\rho}_1^2 |f|^2 dxdt \\
 & + \iint_Q {\rho}_0^2 |y|^2 dxdt +\left. \sum_{i=1}^2 \iint_Q {\rho}_0^2 |p^i|^2 dxdt  \right).
\end{aligned}
\end{equation}
Also, from $(\ref{eq18ch5})_1$ multiplying by $-\rho_5^2 y_{xxt}$ and integrating in $I$, we have
\begin{align*}
\int_I \rho_5^2 |y_{xt}|^2 dx + \frac{1}{2} \frac{d}{dt} \left(\int_I a(0,t,x) \rho_5^2 |y_{xx}|^2 dx\right) &\leq  \int_I a(0,t,x)\rho_5 \rho_{5,t} |y_{xx}|^2 dx + \frac{1}{2} \int_I D_2 a(0,t,x) \rho_5^2 |y_{xx}|^2 dx\\
&+ C \left( \int_{\mathcal{O}} \rho_5^2 |f|^2 dx + \int_I \rho_5^2 |y|^2 dx + \int_I \rho_5^2 |y_x|^2 dx \right.\\
&+ \left.\int_I \rho_5^2 |y_{xxt}|^2 dx  + \sum_{i=1}^2 \int_I \rho_5^2 |p^i|^2 dx + \int_I \rho_5^2 |G|^2 dx \right)
\end{align*}
whence
\begin{equation}
\label{estim-ad10ch5}
\begin{aligned}
\iint_Q {\rho}_5^2 |y_{xt}|^2 dxdt &+ \underset{[0,T]}{\text{sup}} \left({\rho}_5^2(t) \|y_{xx}(t)\|^2 \right) \leq C \left(\|y_0\|^2_{H^3(I)} + \|G(0)\|_{H_0^1(I)}^2  \right.\\
&+ \iint_Q {\rho}_3^2 |G_{t}|^2 dxdt + \sum_{i=1}^2 \iint_Q \rho^2 |G_i|^2 dxdt   + \iint_Q {\rho}_1^2 |f|^2 dxdt \\
 & + \iint_Q {\rho}_0^2 |y|^2 dxdt +\left. \sum_{i=1}^2 \iint_Q {\rho}_0^2 |p^i|^2 dxdt  \right).
\end{aligned}
\end{equation}

Gathering $(\ref{estim-ad7ch5})-(\ref{estim-ad10ch5})$ , we have $(\ref{eq38ch5})$.
\end{proof}

\section{Null Controllability for Nonlinear Optimality System (\ref{EC7ch5})}\label{s5}

To finalize Theorem \ref{teo1ch5}, we will prove the Nash Quasi-Equilibrium for the optimality system (\ref{EC7ch5}), but this result is equivalent to prove the null controllability. So, in this section we use the Right Inverse Function theorem for Banach spaces to conclude the proof.\\

Let us introduce the space
\begin{align*}
Y:=\left\{\right.&(y,p^1,p^2,f); \rho_0 y, \rho_0 p^i \in L^2(Q); \rho_1 f \in L^2(\mathcal{O}\times(0,T));\\
&\rho (y_t - (a(0,t,x)y_{x})_x + D_1F(0,0) y + D_2F(0,0) y_x - f {1}_{\mathcal{O}} + \frac{1}{\mu_1} p^1 1_{\mathcal{O}_1} + \frac{1}{\mu_2} p^2 1_{\mathcal{O}_2}),\\
&\rho_3 (y_{tt} - ((D_2 a(0,t,x) y_x + a(0,t,x))y_{xt})_x + D_1F(0,0) y_t + D_2F(0,0) y_{xt} - f_t {1}_{\mathcal{O}}) \in L^2(Q),\\
&\rho (-p^i_t - (a(0,t,x)p^i_{x})_x + D_1 F(0,0) p^i - D_2 F(0,0) p^i_x - \alpha_i y 1_{\mathcal{O}_{i,d}})\in L^2(Q);\\
&y(0) \in H^3(I) \cap  H_0^1(I)\left. \right\}
\end{align*}
with norm
\begin{align*}
\|(&y,p^1,p^2,f)\|_Y^2 := \|\rho_0 y\|^2_{L^2(Q)}+ \sum_{i=1}^2\|\rho_0 p^i\|^2_{L^2(Q)}+\|\rho_1 f\|^2_{L^2(\mathcal{O}\times(0,T))}\\
&+\|\rho (y_t - (a(0,t,x)y_{x})_x + D_1F(0,0) y + D_2F(0,0) y_x - f {1}_{\mathcal{O}} + \frac{1}{\mu_1} p^1 1_{\mathcal{O}_1} + \frac{1}{\mu_2} p^2 1_{\mathcal{O}_2})\|^2_{L^2(Q)} \\
&+\|\rho_3 (y_{tt} - (D_2 a(0,t,x)y_x + a(0,t,x)y_{xt})_x + D_1F(0,0) y_t + D_2F(0,0) y_{xt} - f_t {1}_{\mathcal{O}})\|^2_{L^2(Q)} +\|y(0)\|^2_{H^3(I)}\\
&+ \sum_{i=1}^2 \|\rho (-p_t^i - (a(0,t,x)p^i_{x})_x + D_1 F(0,0) p^i - D_2 F(0,0) p^i_x + \alpha_i y 1_{\mathcal{O}_{i,d}})\|^2_{L^2(Q)}.
\end{align*}
It is clear that $Y$ is a Hilbert space equipped with the norm $\|\cdot\|_Y$.

Let $L^2(\rho^2;Q)$  be the Hilbert space formed by the measurable functions $w=w(x,t)$ such that $\rho w \in L^2(Q)$, i.e. 
$$\|w\|^2_{L^2(\rho^2;Q)} := \iint_Q \rho^2 |w|^2 dxdt < + \infty$$
and $F:= \left\{ g \in L^2(Q); \rho g, \rho_3 g_t \in L^2(Q), g(0) \in H^1_0(I) \right\}$.

Let us introduce the Hilbert space
\begin{align*}
Z &:= F \times (L^2(\rho^2;Q))^2 \times (H^3(I) \cap H^1_0(I))
\end{align*}
with norm
\begin{align*}
\|(G,G_1,G_2,y_0)\|_Z^2 &:= \|\rho G\|^2_{L^2(Q)} +\|\rho_3 G_{t}\|^2_{L^2(Q)} + \|G(0)\|_{H_0^1(I)}^2 + \|\rho G_1\|^2_{L^2(Q)}  \\
&+ \|\rho G_2\|^2_{L^2(Q)} + \|y_0\|^2_{H^3(I)}.
\end{align*}
\begin{observation}
\label{obs1ch5}
Notice that, if $(y,p^1,p^2,f)\in Y$, in view of Propositions $\ref{prop5ch5}$ and $\ref{prop6ch5}$, one has 
\begin{align*}
&\underset{[0,T]}{\text{sup}} (\rho_3^2(t) \|y_x(t)\|^2) + \underset{[0,T]}{\text{sup}} (\rho_3^2(t) \|p_x(t)\|^2) + \iint_Q \rho_3^2(|y_{t}|^2+|y_{xx}|^2+|p_t|^2+|p_{xx}|^2 )dxdt \\
&+ \underset{[0,T]}{\text{sup}} ({\rho}_4^2(t) \|y_t(t)\|^2) + \iint_Q {\rho}_4^2 |y_{xt}|^2 dxdt +\underset{[0,T]}{\text{sup}} ({\rho}_5^2(t) \|y_{xx}(t)\|^2) + \iint_Q {\rho}_5^2|y_{xxt}|^2 dxdt\\
&\leq C \|(y,p^1,p^2,f)\|^2_Y. 
\end{align*}  
\end{observation}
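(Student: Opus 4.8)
The plan is to read the asserted estimate as essentially a repackaging of Propositions \ref{prop5ch5} and \ref{prop6ch5} in terms of the norm of $Y$. First I would note that every element $(y,p^1,p^2,f)\in Y$ is automatically a control--state of the linearized system $(\ref{eq18ch5})$: one simply \emph{defines} the source terms $G,G_1,G_2$ to be the residuals
$$G:= y_t-(a(0,t,x)y_x)_x+D_1F(0,0)y+D_2F(0,0)y_x-f1_{\mathcal O}+\tfrac1{\mu_1}p^11_{\mathcal O_1}+\tfrac1{\mu_2}p^21_{\mathcal O_2},$$
$$G_i:=-p^i_t-(a(0,t,x)p^i_x)_x+D_1F(0,0)p^i-D_2F(0,0)p^i_x-\alpha_i y1_{\mathcal O_{i,d}},$$
so that $(y,p^1,p^2)$ solves $(\ref{eq18ch5})$ with these data and $y(0)=y_0$. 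With this identification the left-hand side of the claim is a sub-sum of the left-hand sides of $(\ref{eq37ch5})$ and $(\ref{eq38ch5})$ combined, so it suffices to bound each term on the right-hand sides of Propositions \ref{prop5ch5} and \ref{prop6ch5} by $C\|(y,p^1,p^2,f)\|_Y^2$.

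Next I would match the right-hand-side terms against the definition of $\|\cdot\|_Y$. Most of them coincide literally with summands of the $Y$-norm: $\|y_0\|_{H^3(I)}^2$, $\iint_Q\rho^2|G|^2\,dxdt=\|\rho G\|_{L^2(Q)}^2$, $\sum_i\iint_Q\rho^2|G_i|^2\,dxdt$, $\iint_{\mathcal O\times(0,T)}\rho_1^2|f|^2\,dxdt$, $\sum_i\iint_Q\rho_0^2|p^i|^2\,dxdt$ and $\iint_Q\rho_0^2|y|^2\,dxdt$ are, by construction, among the terms defining $\|(y,p^1,p^2,f)\|_Y^2$. Hence these contributions are harmless, and the whole reduction comes down to controlling the two remaining quantities $\iint_Q\rho_3^2|G_t|^2\,dxdt$ and $\|G(0)\|_{H_0^1(I)}^2$, which are \emph{not} directly listed in the $Y$-norm.

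For the first of these I would differentiate $G$ in time and use $(\ref{derivative_tch5})$ to write
$$\rho_3 G_t=\rho_3\big(y_{tt}-(D_2a(0,t,x)y_x+a(0,t,x)y_{xt})_x+D_1F(0,0)y_t+D_2F(0,0)y_{xt}-f_t1_{\mathcal O}\big)+\tfrac1{\mu_1}\rho_3 p^1_t1_{\mathcal O_1}+\tfrac1{\mu_2}\rho_3 p^2_t1_{\mathcal O_2}.$$
The $L^2(Q)$-norm of the first group is exactly the $\rho_3$-residual appearing in $\|\cdot\|_Y$, while the terms $\rho_3 p^i_t$ are handled by the energy estimate $(\ref{estim-ad5ch5})$, whose right-hand side involves only $\iint_Q\rho^2|G_i|^2$, $\iint_Q\rho_0^2|y|^2$ and $\iint_Q\rho_0^2|p^i|^2$. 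The point I would be most careful about here is that $(\ref{estim-ad5ch5})$ is obtained from the $p^i$-equation alone (multiplying by $\rho_3^2 p^i_t$) and therefore does \emph{not} require $\rho_3 G_t\in L^2(Q)$ as a hypothesis; this breaks the apparent circularity and already gives $\rho_3 G_t\in L^2(Q)$ with $\|\rho_3 G_t\|_{L^2(Q)}^2\le C\|(y,p^1,p^2,f)\|_Y^2$.

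The genuinely delicate term is $\|G(0)\|_{H_0^1(I)}^2$, the trace of the residual at $t=0$, and this is where I expect the main obstacle to lie, since it is the one place where one must pass from the weighted space-time integrability encoded in $Y$ to a pointwise-in-time regularity statement. Here I would exploit that the weights are non-degenerate near $t=0$: because $l(t)\equiv T^2/4$ on $[0,T/2]$, the functions $\rho,\rho_0,\rho_1,\rho_3$ are bounded above and below on $[0,T/2]$, so on this sub-interval membership in $Y$ reduces to the usual unweighted parabolic energy and regularity bounds. Combined with $y_0\in H^3(I)\cap H_0^1(I)$ and the boundary conditions, a standard trace argument then yields $G(0)\in H_0^1(I)$ together with $\|G(0)\|_{H_0^1(I)}^2\le C\|(y,p^1,p^2,f)\|_Y^2$. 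Once both estimates are in hand, summing $(\ref{eq37ch5})$ and $(\ref{eq38ch5})$ and replacing each right-hand-side term by its $Y$-norm bound gives the claim.
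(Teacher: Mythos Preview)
The paper does not give a separate proof of this Observation; it simply records it as an immediate consequence of Propositions~\ref{prop5ch5} and~\ref{prop6ch5}, and your proposal is exactly the unpacking of that remark: define $G,G_i$ as the linear residuals so that $(\ref{eq18ch5})$ holds, match the right-hand sides of $(\ref{eq37ch5})$--$(\ref{eq38ch5})$ with the summands of $\|\cdot\|_Y$, and recover $\rho_3 G_t\in L^2(Q)$ from the $\rho_3$-residual term in $Y$ together with $(\ref{estim-ad5ch5})$. Your identification of $\|G(0)\|_{H_0^1(I)}$ as the only term requiring a trace argument near $t=0$ (where the weights are nondegenerate) is also the correct reading of the paper's intent.
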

Let us define the mapping $\mathcal{A}: Y \rightarrow Z$, given by
\begin{align}
\label{eq39ch5}
\mathcal{A}(y,p^1,p^2,f):= (\mathcal{A}_1(y,p^1,p^2,f),\mathcal{A}_2 (y,p^1,p^2,f),\mathcal{A}_3(y,p^1,p^2,f),\mathcal{A}_4(y,p^1,p^2,f))
\end{align}
where
\begin{align*}
\mathcal{A}_1(y,p^1,p^2,f):=&y_t-(a(y_x,t,x)y_x)_x + F(y,y_x)- f {1}_{\mathcal{O}} + \frac{1}{\mu_1} p^1 {1}_{\mathcal{O}_1} + \frac{1}{\mu_2} p^2 {1}_{\mathcal{O}_2},\\
\mathcal{A}_2(y,p^1,p^2,f):=&-p^1_t-((D_1 a(y_x,t,x)y_x + a(y_x,t,x))p_x^1)_x\\
&+ D_1F(y,y_x)p^1 - (D_2F(y,y_x)p^1)_x - \alpha_1 y {1}_{\mathcal{O}_{1,d}},\\
\mathcal{A}_3(y,p^1,p^2,f):=&-p^2_t-((D_1 a(y_x,t,x)y_x + a(y_x,t,x))p_x^2)_x\\
&+ D_1F(y,y_x)p^2 - (D_2F(y,y_x)p^2)_x - \alpha_2 y {1}_{\mathcal{O}_{2,d}},\\
\mathcal{A}_4(y,p^1,p^2,f):=&y(0).
\end{align*}

We will use the following lemmas to conclude the desired result.

\begin{lemma}
\label{lem1ch5}
Let $\mathcal{A}: Y \rightarrow Z$ be the mapping defined by $(\ref{eq39ch5})$. Then, $\mathcal{A}$ is well defined and continuous.
\end{lemma}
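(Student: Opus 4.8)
The plan is to split each component of $\mathcal{A}$ into the frozen-coefficient linear operator that already appears in the definition of $\|\cdot\|_Y$ plus a genuinely nonlinear remainder, and then to estimate the remainders. With $L,L^*$ as in the proof of Proposition \ref{prop3ch5}, I would write $\mathcal{A}_1 = \big(Ly - f1_{\mathcal{O}} + \tfrac1{\mu_1}p^11_{\mathcal{O}_1} + \tfrac1{\mu_2}p^21_{\mathcal{O}_2}\big) + \mathcal{N}_1$ and, for $i=1,2$, $\mathcal{A}_{i+1} = \big(L^*p^i - \alpha_i y1_{\mathcal{O}_{i,d}}\big) + \mathcal{R}_i$, where
\[
\mathcal{N}_1 = (a(0,t,x)y_x)_x - (a(y_x,t,x)y_x)_x + F(y,y_x) - D_1F(0,0)y - D_2F(0,0)y_x
\]
and $\mathcal{R}_i$ collects the analogous coefficient differences in the $p^i$-equation. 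The parenthesized linear parts are, after multiplication by $\rho$ (and, for $\mathcal{A}_1$, after $t$-differentiation and multiplication by $\rho_3$), exactly the quantities whose $L^2(Q)$-norms are summands of $\|(y,p^1,p^2,f)\|_Y^2$ (up to the harmless sign in the zeroth-order coupling); hence they lie in $Z$ with $Z$-norm controlled by $\|(y,p^1,p^2,f)\|_Y$. Moreover $\mathcal{A}_4=y(0)$ is controlled by the summand $\|y(0)\|_{H^3(I)}^2$. It therefore remains to estimate $\rho\mathcal{N}_1$, $\rho_3(\mathcal{N}_1)_t$, $\mathcal{N}_1(0)$ and $\rho\mathcal{R}_i$ in the relevant norms.

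Next I would identify the algebraic structure of the remainders. By Taylor's theorem and the boundedness assumptions on the derivatives of $a$ and $F$, one has $a(y_x,t,x)-a(0,t,x)=\big(\int_0^1 D_1a(\tau y_x,t,x)\,d\tau\big)y_x$, $D_3a(y_x,t,x)-D_3a(0,t,x)=O(|y_x|)$, and $F(y,y_x)-D_1F(0,0)y-D_2F(0,0)y_x=O(|y|^2+|y_x|^2)$ (the zeroth- and first-order terms having been removed in the linearization). Carrying out the $x$-differentiations in $(a(y_x,t,x)y_x)_x$, one finds that $\mathcal{N}_1$ is a finite sum of terms each at least quadratic in $(y,y_x,y_{xx})$ with $M$-bounded coefficients, the worst being of the type $y_x\,y_{xx}$; likewise each $\mathcal{R}_i$ is a sum of terms bilinear in $(y,y_x,y_{xx})$ and $(p^i,p^i_x,p^i_{xx})$.

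The technical heart is the weighted $L^2$ estimate, which I would carry out term by term using the one-dimensional embedding $H^1(I)\hookrightarrow L^\infty(I)$ and the weighted bounds of Observation \ref{obs1ch5} (i.e. of Propositions \ref{prop5ch5} and \ref{prop6ch5}). For the model term $y_x y_{xx}$, since the weights depend on $t$ only, $\iint_Q\rho^2|y_x|^2|y_{xx}|^2\,dxdt \le C\int_0^T \rho^2\big(\|y_x\|^2+\|y_{xx}\|^2\big)\|y_{xx}\|^2\,dt$; I would then extract one factor with a sup-bound ($\sup_t\rho_3^2\|y_x\|^2$ or $\sup_t\rho_5^2\|y_{xx}\|^2$) and leave the space-time integral $\iint_Q\rho_3^2|y_{xx}|^2$, the leftover power of the weights being absorbed by the comparison relations $(\ref{eq30ch5})$ together with the fact that every polynomial in $\overline{\xi}^*$ is dominated by an arbitrarily small power of the exponential weight. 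This yields $\|\rho\mathcal{N}_1\|_{L^2(Q)}+\sum_i\|\rho\mathcal{R}_i\|_{L^2(Q)}\le C\big(\|(y,p^1,p^2,f)\|_Y^2+\|(y,p^1,p^2,f)\|_Y^3\big)$. I expect the $\rho_3(\mathcal{N}_1)_t$ component to be the main obstacle: differentiating $y_xy_{xx}$ in time produces $y_{xt}y_{xx}$ and $y_xy_{xxt}$, whose control requires precisely the highest-order weighted estimates of Proposition \ref{prop6ch5} ($\sup_t\rho_5^2\|y_{xx}\|^2$, $\sup_t\rho_4^2\|y_t\|^2$, $\iint_Q\rho_4^2|y_{xt}|^2$, $\iint_Q\rho_5^2|y_{xxt}|^2$), and the additional powers of $\overline{\xi}^*$ created by the $t$-differentiation must be reabsorbed through $(\ref{eq30ch5})$.

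Finally, continuity follows by the same mechanism applied to differences: for $U,\tilde U\in Y$ the multilinear terms are handled by telescoping, and coefficient differences such as $\int_0^1\big(D_1a(\tau y_x,t,x)-D_1a(\tau\tilde y_x,t,x)\big)\,d\tau$ by the mean value theorem using the bounded second and third derivatives of $a$, which delivers a local Lipschitz estimate $\|\mathcal{A}(U)-\mathcal{A}(\tilde U)\|_Z\le C(R)\|U-\tilde U\|_Y$ on balls of radius $R$ in $Y$; hence $\mathcal{A}$ is continuous (indeed of class $C^1$). The trace conditions are checked separately: because $l(t)\equiv T^2/4$ on $[0,T/2]$, all the weights $\rho,\rho_0,\dots,\rho_5$ are bounded above and below near $t=0$, so the $t$-traces are the ordinary ones; evaluating $\mathcal{N}_1$ at $t=0$ gives a polynomial in $y(0),y_x(0),y_{xx}(0)$ lying in $H^1(I)$ and vanishing on $\partial I$ by virtue of $y(0)\in H^3(I)\cap H^1_0(I)$, so $\mathcal{A}_1(0)\in H^1_0(I)$ and $\mathcal{A}_4=y(0)\in H^3(I)\cap H^1_0(I)$, completing the verification that $\mathcal{A}$ maps $Y$ into $Z$.
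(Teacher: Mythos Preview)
Your proposal is correct and follows essentially the same route as the paper: split each $\mathcal{A}_i$ into the frozen-coefficient linear piece that appears in $\|\cdot\|_Y$ plus a quadratic (or higher) remainder, and control the remainders termwise using the one-dimensional embedding $H^1(I)\hookrightarrow L^\infty(I)$, the weighted a priori bounds of Observation~\ref{obs1ch5} (i.e.\ Propositions~\ref{prop5ch5}--\ref{prop6ch5}), and the weight comparisons~\eqref{eq30ch5}. The paper carries out exactly these computations (obtaining $\|\rho\mathcal{A}_1\|_{L^2}^2\le C(\|U\|_Y^2+\|U\|_Y^4)$, $\|\rho_3\mathcal{A}_{1,t}\|_{L^2}^2\le C(\|U\|_Y^2+\|U\|_Y^4+\|U\|_Y^6)$, etc.) and, as you do, dismisses continuity as following by the same mechanism; your telescoping/mean-value argument for a local Lipschitz bound is a slightly more explicit version of that remark.
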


\begin{proof}
For every $(y,p^1,p^2,f)\in Y$ one has
\begin{align*}
\|\rho & \mathcal{A}_1(y,p^1,p^2,f)\|_{L^2(Q)}^2 \\
= &\iint_Q \rho^2 |y_t-(a(y_x,t,x)y_x)_x + F(y,y_x) - f {1}_{\mathcal{O}} + \frac{1}{\mu_1} p^1 {1}_{\mathcal{O}_1} + \frac{1}{\mu_2} p^2 {1}_{\mathcal{O}_2}|^2 dxdt \\
\leq &C \left(\iint_Q \rho^2 |y_t-(a(0,t,x)y_{x})_x + D_1F(0,0) y + D_2F(0,0) y_x - f {1}_{\mathcal{O}} + \frac{1}{\mu_1} p^1 {1}_{\mathcal{O}_1} + \frac{1}{\mu_2} p^2 {1}_{\mathcal{O}_2}|^2 dxdt \right.\\
& + \iint_Q \rho^2 |a(y_x,t,x)-a(0,t,x)|^2|y_{xx}|^2 dxdt + \iint_Q \rho^2 |D_3 a(y_x,t,x)- D_3 a(0,t,x)|^2 |y_x|^2 dxdt \\
&+ \left. \iint_Q \rho^2 |D_1 a(y_x,t,x)|^2 |y_x|^2|y_{xx}|^2 dxdt + \iint_Q \rho^2 |F(y,y_x) - D_1F(0,0) y - D_2F(0,0) y_x|^2 dxdt \right)\\
= &C(I_1 + I_2 + I_3 +I_4 + I_5).
\end{align*}
We know that
$$I_1 \leq  \|(y,p^1,p^2,f)\|_Y^2 < + \infty.$$
Also
\begin{align*}
I_2 + I_3 + I_4&\leq C \iint_Q \rho^2 |y_x|^2 (|y_{xx}|^2 + |y_x|^2)dxdt\\
&\leq C \int_0^T {\rho}_5^2 \rho_3^2 \|y_{xx}(t)\|^2 (\|y_{xx}(t)\|^2 + \|y_x{t}\|)\ dt\\
&\leq C \left( \underset{[0,T]}{\text{sup}}\ \ {\rho}_5^2(t) \|y_{xx}(t)\|^2 \right) \iint_Q \rho_3^2 (|y_{xx}|^2 + |y_x|^2)dxdt\\
&\leq C  \|(y,p^1,p^2,f)\|_Y^4 < + \infty,
\end{align*}
and
\begin{align*}
I_5 \leq &C \iint_Q \rho^2 (|\nabla F(\tilde{\theta}y,\tilde{\theta}y_x)- \nabla F(0,0)|^2 (|y|^2 + |y_x|^2) dxdt\\
\leq\ &C \iint_Q \rho^2 \tilde{\theta}^2 (|y|^2+|y_x|^2) (|y|^2 + |y_x|^2) dxdt\\
\leq\ &C \int_0^T \rho_5^2 \rho_3^2 (\|y_x(t)\|^2 + \|y_{xx}(t)\|^2) (\|y(t)\|^2 + \|y_x(t)\|^2)\\
\leq\ &C \left\{ \left( \underset{[0,T]}{\text{sup}}\ \ {\rho}_5^2(t) \|y_{xx}(t)\|^2 \right) + \left( \underset{[0,T]}{\text{sup}}\ \ {\rho}_3^2(t) \|y_{x}(t)\|^2 \right) \right\}\cdot \\
&\left( \iint_Q \rho_0^2 |y|^2 dxdt + \iint_Q \rho_2^2 |y_x|^2 dxdt \right)\\
\leq\ &C  \|(y,p^1,p^2,f)\|_Y^4 < + \infty,
\end{align*}
where $\tilde{\theta}:=\tilde{\theta}(x,t) \in (0,1)$.\\ \\
Now
\begin{align*}
\|\rho_3 &\mathcal{A}_{1,t}(y,p^1,p^2,f)\|_{L^2(Q)}\\
=&\iint_Q \rho_3^2 |(y_t - (a(y_x,t,x)y_x)_x + F(y,y_x) - f {1}_{\mathcal{O}} + \frac{1}{\mu_1} p^1 {1}_{\mathcal{O}_1} + \frac{1}{\mu_2} p^2 {1}_{\mathcal{O}_2})_t|^2 dxdt \\
= &\iint_Q \rho_3^2 |y_{tt}- ((D_2 a(y_x,t,x) y_x + a(y_x,t,x))y_{xt})_x + \nabla F(y,y_x)(y,y_x)_t -f_t {1}_{\mathcal{O}} + \frac{1}{\mu_1} p_t^1 {1}_{\mathcal{O}_1} + \frac{1}{\mu_2} p_t^2 {1}_{\mathcal{O}_2}|^2 dxdt \\
\leq &C \left( \iint_Q \rho_3^2 |y_{tt}- ((D_2 a(0,t,x)y_x + a(0,t,x))y_{xt})_x + \nabla F(0,0) (y,y_x)_t -f_t 1_\mathcal{O}|^2 dxdt + \sum_{i=1}^2 \iint_Q \rho_3^2 |p_t^i|^2 dxdt \right.\\
&\left.+ \iint_Q \rho_3^2 |((D_2 a(y_x,t,x)-D_2 a(0,t,x))y_x)_x|^2 dxdt + \iint_Q \rho_3^2 |((a(y_x,t,x)-a(0,t,x))y_{xt})_x|^2 dxdt \right)\\
\leq &C \left(\|(y,p^1,p^2,f)\|_Y^2 + \|(y,p^1,p^2,f)\|_Y^4 + \|(y,p^1,p^2,f)\|_Y^6 \right) < + \infty.
\end{align*}
Analogously, we have
\begin{align*}
\|\mathcal{A}_{1}(y,p^1,p^2,f)(0)\|_{H_0^1(I)}&=\int_I |(y_{xt}(0)-(a(y_x(0,t,0))y_x(0))_{xx} -f_x(0) {1}_{\mathcal{O}} + \frac{1}{\mu} p_x(0) {1}_{\omega}|^2 dx \\
&\leq C (\|(y,p^1,p^2,f)\|_Y^2 + \|(y,p^1,p^2,f)\|_Y^4) < +\infty.
\end{align*}
And finally
\begin{align*}
\|\rho &\mathcal{A}_{i+1}(y,p^1,p^2,f)\|_{L^2(Q)}^2\\
=\ & \iint_Q \rho^2 |-p^i_t-((D_1 a(y_x,t,x)y_x + a(y_x,t,x))p^i_x)_x + D_1F(y,y_x)p^i - (D_2 F(y,y_x) p^i)_x - \alpha_i y {1}_{\mathcal{O}_{i,d}}|^2 dxdt \\
\leq\ &C \left(\iint_Q \rho^2 |-p_t^i - (a(0,t,x)p_{x}^i)_x + D_1F(0,0) p^i - D_2F(0,0) p^i_x - \alpha_i y {1}_{\mathcal{O}_{i,d}}|^2 dxdt \right.\\
&+ \iint_Q \rho^2 |a(y_x,t,x)-a(0)|^2|p_{xx}^i|^2 dxdt +  \iint_Q \rho^2 |D_1 a(y_x,t,x)|^2 |p^i_x|^2|y_{xx}|^2 dxdt \\
&  + \iint_Q \rho^2 |D_{11}^2 a(y_x,t,x)|^2 |y_{xx}|^2 |y_x|^2 |p_x|^2 dxdt + \iint_Q \rho_3^2 |D_1 (F(y,y_x)-F(0,0))|^2 |p^i|^2 dxdt\\
& + \iint_Q \rho_3^2 |D_2 (F(y,y_x)-F(0,0))|^2 |p^i_x|^2 dxdt + \iint_Q \rho_3^2 |D_{12}^2 (F(y,y_x)|^2 |y_x|^2 |p^i|^2 dxdt\\
&\left. + \iint_Q \rho_3^2 |D_2^2 F(y,y_x)|^2 |y_{xx}|^2 |p^i|^2 dxdt\right)\\
\leq\ &C \left( \|(y,p^1,p^2,f)\|_Y^2 + \|(y,p^1,p^2,f)\|_Y^4 + \|(y,p^1,p^2,f)\|_Y^6 \right) < + \infty.
\end{align*}
Consequently, $\mathcal{A}$ takes values em $Z$.\\
That the mapping $\mathcal{A}$ is continuous is easy to prove using similar arguments.
\end{proof}

\begin{lemma}
\label{lem2ch5}
The mapping $\mathcal{A}: Y \rightarrow Z$ is continuously differentiable.
\end{lemma}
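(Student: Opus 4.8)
The plan is the standard four-step verification of continuous Fréchet differentiability for a map built from smooth superposition (Nemytskii) operators and linear differential operators. Writing a base point as $U=(y,p^1,p^2,f)$ and a direction as $V=(\tilde y,\tilde p^1,\tilde p^2,\tilde f)$, I would first guess the derivative by formally differentiating each component. For the first component this produces exactly the linearised operator
\[
\mathcal{A}_1'(U)V = \tilde y_t - ((D_1 a(y_x,t,x)y_x + a(y_x,t,x))\tilde y_x)_x + D_1F(y,y_x)\tilde y + D_2F(y,y_x)\tilde y_x - \tilde f 1_{\mathcal{O}} + \frac{1}{\mu_1}\tilde p^1 1_{\mathcal{O}_1} + \frac{1}{\mu_2}\tilde p^2 1_{\mathcal{O}_2},
\]
while $\mathcal{A}_{i+1}'(U)V$ is obtained by also differentiating the coefficients $D_1a\,y_x+a$, $D_1F$ and $D_2F$ in the directions $\tilde y,\tilde y_x$ (which brings in $D_{11}^2 a$ and the second derivatives of $F$) and applying the undifferentiated operator to $\tilde p^i$; finally $\mathcal{A}_4'(U)V=\tilde y(0)$.

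Second, I would check that $\mathcal{A}'(U)\in\mathcal{L}(Y,Z)$. Since $\mathcal{A}'(U)V$ is composed of the very same monomials in $(y,y_x,y_{xx},p^i,p^i_x,p^i_{xx})$ that occur in $\mathcal{A}(U)$, but now linear in $V$, the weighted Hölder estimates carried out in the proof of Lemma \ref{lem1ch5} transfer line by line: each term splits into a supremum-in-time weighted factor coming from $U$ (bounded by $\|U\|_Y$ through Observation \ref{obs1ch5}) times a weighted $L^2$ factor coming from $V$ (bounded by $\|V\|_Y$). This gives a bound of the form $\|\mathcal{A}'(U)V\|_Z\le C(1+\|U\|_Y)^2\|V\|_Y$, so $\mathcal{A}'(U)$ is a bounded linear map.

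Third, to pass from this formal derivative to genuine Fréchet differentiability I would estimate the remainder $R(U,V):=\mathcal{A}(U+V)-\mathcal{A}(U)-\mathcal{A}'(U)V$. Expanding each smooth coefficient by the first-order Taylor formula with integral remainder — legitimate because $a$ and $F$ have bounded derivatives of the required order — every component of $R$ reduces to a finite sum of terms each carrying at least two factors taken from $V$ (or one factor from $V$ multiplied by the increment of a coefficient along $V$). The same weighted product estimates then yield $\|R(U,V)\|_Z\le C(1+\|U\|_Y+\|V\|_Y)^k\|V\|_Y^2$ for a fixed integer $k$, so that $\|R(U,V)\|_Z=o(\|V\|_Y)$ as $\|V\|_Y\to 0$; hence $\mathcal{A}$ is Fréchet differentiable with derivative $\mathcal{A}'(U)$.

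Finally, continuity of $U\mapsto\mathcal{A}'(U)$ reduces to bounding $\|(\mathcal{A}'(U_1)-\mathcal{A}'(U_2))V\|_Z$, whose terms are governed by the increments of the coefficient functions $D_1a\,y_x+a$, $D_{11}^2 a$, $D_1F$, $D_2F$ and the second derivatives of $F$, evaluated at $U_1$ against $U_2$. For the $a$-coefficients the bounded third derivative $D^3a$ supplied by $a\in C^3$ yields a clean local Lipschitz bound, so that the corresponding part of $\mathcal{A}'(U_1)-\mathcal{A}'(U_2)$ is controlled by $C(1+\|U_1\|_Y+\|U_2\|_Y)^k\|U_1-U_2\|_Y\|V\|_Y$; for the $F$-coefficients, where only the second derivatives are at hand, the same differences are instead handled by continuity of $D^2F$ together with dominated convergence, the weighted supremum-in-time factors of Observation \ref{obs1ch5} furnishing the integrable majorant. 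The main obstacle, and the place where the regularity budget is tight, is precisely this bookkeeping in the top-order terms — the $\rho_3\partial_t$ part of $\mathcal{A}_1$ and the $a(y_x,\cdot)p^i_{xx}$ and $(D_2F\,p^i)_x$ pieces of $\mathcal{A}_{i+1}$ — where one must verify that differentiating a smooth coefficient and then applying the outer spatial or time derivative never creates a product of total differentiation order beyond what the weights $\rho_3,\rho_4,\rho_5$ and the supremum estimates of Observation \ref{obs1ch5} can absorb.
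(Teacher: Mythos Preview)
Your proposal is correct and the underlying weighted product estimates are the same as the paper's, but the logical route differs in one respect worth noting. The paper does \emph{not} establish Fréchet differentiability directly via a remainder bound; instead it first proves Gâteaux differentiability---showing, for each fixed direction $(y',{p^1}',{p^2}',f')$, that the difference quotients converge in the appropriate weighted spaces using Observation~\ref{obs1ch5} and Lebesgue's dominated convergence theorem---then separately proves that $U\mapsto\mathcal{A}'(U)$ is continuous from $Y$ into $\mathcal{L}(Y,Z)$, and finally invokes the classical fact that a Gâteaux-differentiable map with continuous Gâteaux derivative is automatically Fréchet $C^1$. Your approach goes straight for the quadratic remainder estimate $\|R(U,V)\|_Z\le C(1+\|U\|_Y+\|V\|_Y)^k\|V\|_Y^2$. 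The paper's detour through Gâteaux differentiability trades a uniform-in-direction bound for a pointwise-in-direction limit, which is marginally easier to write out term by term (no need to track the exponent $k$), while your direct route is more transparent about exactly which regularity is being consumed at each step. Your remark that the $F$-coefficient differences in the continuity step must be handled by dominated convergence rather than a Lipschitz bound---since $F$ is only assumed $C^2$---is exactly right and matches what the paper does implicitly through its repeated appeals to Lebesgue's theorem.
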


\begin{proof}
Let us first prove that $\mathcal{A}$ is $G-$differentiable at any $(y,p^1,p^2,f) \in Y$ and let us compute the $G-$derivative $\mathcal{A}'(y,p^1,p^2,f)$.\\
Thus, let us fix $(y,p^1,p^2,f)$ in $Y$ and let us take $(y',{p^1}',{p^2}',f') \in Y$ and $\lambda>0$.\\
Let us introduce the linear mapping $D\mathcal{A}: Y \rightarrow Z$, with
$$D\mathcal{A}(y,p^1,p^2,f)=D\mathcal{A}=(D\mathcal{A}_1,D\mathcal{A}_2,D\mathcal{A}_3).$$
\begin{align*}
D\mathcal{A}_1(y',{p^1}',{p^2}',f'):=\ &y'_t - ((D_1  a(y_x,t,x) y_x + a(y_x,t,x))y'_x)_x\\
&+ D_1F(y,y_x)y' + D_2F(y,y_x)y_{x}'\\
&- f' {1}_{\mathcal{O}} + \frac{1}{\mu_1} {p^1}' {1}_{\mathcal{O}_1} + \frac{1}{\mu_2} {p^2}' {1}_{\mathcal{O}_2},\\
D\mathcal{A}_{1,t}(y',{p^1}',{p^2}',f'):=\ &y'_{tt} - ((D_1 a(y_x,t,x) y_x + a(y_x,t,x))y'_{x})_{xt} \\
&+ D_{11}^2 F(y,y_x)y_t y' + D_{12}^2 F(y,y_x)y_{xt} y'\\
&+ D_{21}^2 F(y,y_x) y_t y'_x + D^2_{22} F(y,y_x) y_{xt} y_x'\\
&+ D_1 F(y,y_x) y_t' + D_2 F(y,y_x) y_{xt}'\\
&- f'_t {1}_{\mathcal{O}} + \frac{1}{\mu_1} {p^1}'_t {1}_{\mathcal{O}_1} + \frac{1}{\mu_2} {p^2}'_t {1}_{\mathcal{O}_2},\\
D\mathcal{A}_2(y',{p^1}',{p^2}',f'):=\ &-{p^1}'_t - ((D_1 a(y_x,t,x) y_x + a(y_x,t,x)) {p^1}'_x)_x - (D_{11}^2 a(y_x,t,x)y'_x y_x p^1_x)_x\\
&- 2(D_1 a(y_x,t,x)y'_x p^1_x)_x + D_{11}^2 F(y,y_x) y' {p^1} + D_{12}^2 F(y,y_x) y'_x p^1\\
& - (D_{21}^2 F(y,y_x) y' p^1)_x - (D_{22}^2 F(y,y_x) y'_x p^1)_x\\
&+ D_1 F(y,y_x) {p^1}' - (D_2 F(y,y_x) {p^1}')_x - \alpha_1 y' {1}_{\mathcal{O}_1},\\
D\mathcal{A}_3(y',{p^1}',{p^2}',f'):=\ &-{p^2}'_t - ((D_1 a(y_x,t,x) y_x + a(y_x,t,x)) {p^2}'_x)_x - (D_{11}^2 a(y_x,t,x)y'_x y_x p^2_x)_x\\
&- 2(D_1 a(y_x,t,x)y'_x p^2_x)_x + D_{11}^2 F(y,y_x) y' {p^2} + D_{12}^2 F(y,y_x) y'_x p^2\\
& - (D_{21}^2 F(y,y_x) y' p^2)_x - (D_{22}^2 F(y,y_x) y'_x p^2)_x\\
&+ D_1 F(y,y_x) {p^2}' - (D_2 F(y,y_x) {p^2}')_x - \alpha_2 y' {1}_{\mathcal{O}_2},\\
D\mathcal{A}_4(y',{p^1}',{p^2}',f'):=\ &y'(0),
\end{align*}
for all $(y',{p^1}',{p^2}',f') \in Y$.\\
From the definition of the spaces $Y$ and $Z$, it becomes that $D\mathcal{A} \in \mathcal{L}(Y;Z)$.\\
Furthermore, we have
$$\frac{1}{\lambda}[\mathcal{A}_i((y,p^1,p^2,f)+\lambda(y',{p^1}',{p^2}',f'))-\mathcal{A}_i(y,p^1,p^2,f)] \rightarrow D\mathcal{A}_i(y',{p^1}',{p^2}',f')$$
strongly in $L^2(\rho^2;Q)$ for $i=1,2,3,4$ as $\lambda \rightarrow 0$ and
$$\frac{1}{\lambda}[\mathcal{A}_{1,t}((y,p^1,p^2,f)+\lambda(y',{p^1}',{p^2}',f'))-\mathcal{A}_{1,t}(y,p^1,p^2,f)] \rightarrow D\mathcal{A}_{1,t}(y',{p^1}',{p^2}',f')$$
strongly in $L^2(\rho_3^2;Q)$  as $\lambda \rightarrow 0$.\\
Indeed, we denote 
$$\overline{a} := a(y_x,t,x),\ \ \overline{a}_n := a(y_x^n,t,x),\ \ a_\lambda := a(y_x + \lambda y'_x,t,x),$$
$$\overline{a}' := D_1 a(y_x,t,x),\ \ \overline{a}'_n := D_1 a(y_x^n,t,x),\ \ a_\lambda':= D_1 a(y_x + \lambda y'_x,t,x),$$ 
$$\overline{F} := F(y,y_x),\ \ \overline{F}_n:= F(y^n,y^n_x),\ \ F_\lambda := F( y + \lambda y',y_x + \lambda y'_x),$$
$$\overline{F}'_i:= D_i F(y,y_x),\ \ F'_{n,i} := D_i F(y^n,y^n_x),\ \ F_{\lambda,i}' := D_i F(y + \lambda y',y_x + \lambda y'_x),$$ 
and we have
\begin{align*}
&\left\|\frac{1}{\lambda}[\mathcal{A}_1((y,p^1,p^2,f)+\lambda(y',{p^1}',{p^2}',f')) - \mathcal{A}_1(y,p^1,p^2,f)] - D\mathcal{A}_1(y',{p^1}',{p^2}',f')\right\|_{L^2(\rho^2;Q)}^2 \\
\leq &C \left( \iint_Q \rho^2 \left|\left(\left[\frac{a_\lambda - \overline{a}}{\lambda} - \overline{a}' y'_x \right] y_x\right)_x \right|^2 dxdt + \iint_Q \rho^2 |(a_\lambda - \overline{a})y'_x)_x|^2 dxdt \right.\\
&\left. + \iint_Q \rho^2 \left|\frac{F_\lambda - \overline{F}}{\lambda} - \nabla F(y,y_x) (y',y'_x)\right|^2 dxdt \right)\\
\leq &C \left( \iint_Q \rho^2 \left| \frac{a_\lambda - \overline{a}}{\lambda} - \overline{a}' y'_x \right|^2 |y_{xx}|^2 dxdt \right.\\
&+ \iint_Q \rho^2 \left|\frac{a'_\lambda (y_{xx} + \lambda y'_{xx}) - \overline{a}' y_{xx}}{\lambda} - D_{11}^2 a(y_x,t,x) y_{xx} y'_{x} - \overline{a}' y'_{xx} \right|^2 |y_x|^2 dxdt   \\
& +\iint_Q \rho^2 | a_{\lambda} - \overline{a}|^2 |y'_{xx}|^2 dxdt + \iint_Q \rho^2 |a'_{\lambda} (y_{xx} + \lambda y'_{xx}) - \overline{a}' y_{xx}|^2 |y'_x|^2 dxdt\\
& \left. + \iint_Q \rho^2 |\nabla F(y + \tilde{\lambda} y', y_x + \tilde{\lambda} y'_x) - \nabla F(y,y_x)|^2 (|y'|^2 + |y'_x|^2) dxdt \right)\\
\leq &C  \left(\lambda^2 \iint_Q \rho^2 |y'_x|^4 |y_{xx}|^2 dxdt +  \iint_Q \rho^2 \left|\frac{a'_\lambda - \overline{a}'}{\lambda} - D_{11}^2 a(y_x,t,x) y'_{x} \right|^2 |y_{xx}|^2 |y_x|^2 dxdt \right.\\
&+ \lambda^2 \iint_Q \rho^2 \left|y'_x \right|^2 |y'_{xx}|^2 |y_x|^2 dxdt + \lambda^2 \iint_Q \rho^2 |y'_x|^2 |y'_{xx}|^2 dxdt \\
& \left. + \lambda^2 \iint_Q \rho^2 |y_{xx}|^2 |y'_x|^4 dxdt + \lambda^2 \iint_Q \rho^2 |y'_{xx}|^2 |y'_x|^2 dxdt+ \lambda^2 \iint_Q \rho^2 (|y'|^4 + |y'_x|^4) \right),
\end{align*}
where $\tilde{\lambda}:= \tilde{\lambda}(x,t) \in (0,\lambda)$.\\
Using Observation $\ref{obs1ch5}$ and Lebesgue's Theorem, we find that
$$\frac{1}{\lambda}[\mathcal{A}_1((y,p^1,p^2,f)+\lambda(y',{p^1}',{p^2}1,f'))-\mathcal{A}_1(y,p^1,p^2,f)] \rightarrow D\mathcal{A}_1(y',{p^1}',{p^2}',f').$$
Similarly
\begin{align*}
&\left\|\frac{1}{\lambda} \Bigl[\mathcal{A}_{1,t}((y,p^1,p^2,f)+\lambda(y',{p^1}',{p^2}',f'))-\mathcal{A}_{1,t}(y,p^1,p^2,f)\Bigr] - D\mathcal{A}_{1,t}(y',{p^1}',{p^2}',f') \right\|_{L^2(\rho_3^2;Q)}^2 \\
&\leq C \left( \iint_Q \rho_3^2 \left|\left(\left[\frac{a_\lambda - \overline{a}}{\lambda} - \overline{a}' y'_x \right] y_x\right)_{xt} \right|^2 dxdt + \iint_Q \rho_3^2 |(a_\lambda - \overline{a})y'_x)_{xt}|^2 dxdt \right. \\
& + \iint_Q \rho_3^2 \left|\frac{F'_{\lambda,1}-\overline{F}'_1}{\lambda} - D_{11}^2 F(y,y_x) y' - D_{12}^2 F(y,y_x) y'_x \right|^2 |y_t|^2 dxdt \\
& + \iint_Q \rho_3^2 \left|\frac{F'_{\lambda,2}-\overline{F}'_2}{\lambda} - D_{21}^2 F(y,y_x) y' - D_{22}^2 F(y,y_x) y'_x \right|^2 |y_{xt}|^2 dxdt \\
&\left. + \iint_Q \rho_3^2 |F'_{\lambda,1} - \overline{F}'_1|^2 |y'_t|^2 dxdt + \iint_Q \rho_3^2 |F'_{\lambda,2} - \overline{F}'_2|^2 |y'_{xt}|^2 dxdt \right).
\end{align*}
Using Observation $\ref{obs1ch5}$ and Lebesgue's Theorem, we find that
$$\frac{1}{\lambda}[\mathcal{A}_{1,t}((y,p^1,p^2,f)+\lambda(y',{p^1}',{p^2}',f'))-\mathcal{A}_{1,t}(y,p^1,p^2,f)] \rightarrow D\mathcal{A}_{1,t}(y',{p^1}',{p^2}',f').$$
Similarly
\begin{align*}
&\left\|\frac{1}{\lambda} \Bigl[\mathcal{A}_1((y,p^1,p^2,f)+\lambda(y',{p^1}',{p^2}',f'))(0)-\mathcal{A}_1(y,p^1,p^2,f)(0)\Bigr] - D\mathcal{A}_1(y',{p^1}',{p^2}',f')(0)\right\|_{H^1_0(I)}^2 \\
&\leq C \left( \int_I \Bigl|\Bigl[ \left(\frac{1}{\lambda}(a(y_x(0)+\lambda y'_x(0),t,0)-a(y_x(0),t,0))- D_1 a(y_x(0),t,0) y'_x(0) \right)y_x(0) \Bigr]_{xx} \Bigr|^2 dx \right. \\
&+  \int_I \Bigl|\Bigl[(a(y_x(0)+\lambda y'_x(0),t,0)-a(y_x(0),t,0))y_x(0)\Bigr]_{xx} \Bigr|^2 dx \\
&\left. + \iint_Q \rho^2 \left|\left[\frac{F(y+\lambda y',y_x + \lambda y'_x)(0) - F(y,y_x)(0)}{\lambda} - \nabla F(y,y_x)(0) (y',y'_x)(0)\right]_x \right|^2 dxdt \right).
\end{align*}
Using Observation $\ref{obs1ch5}$  and Lebesgue's Theorem, we find that
$$\frac{1}{\lambda}[\mathcal{A}_1((y,p^1,p^2,f)+\lambda(y',{p^1}',{p^2}',f'))(0)-\mathcal{A}_1(y,p^1,p^2,f)(0)] \rightarrow D\mathcal{A}_1(y',{p^1}',{p^2}',f')(0).$$
and finally
\begin{align*}
\Bigl\|\frac{1}{\lambda}& \Bigl[\mathcal{A}_i((y,p^1,p^2,f) +\lambda(y',{p^1}',{p^2}',f'))-\mathcal{A}_2(y,p^1,p^2,f) \Bigr] - D\mathcal{A}_2(y',{p^1}',{p^2}',f')\Bigr\|_{L^2(\rho^2;Q)}^2 \\
&\leq C \left( \iint_Q \rho^2 \left|\left(\left(\frac{a'_\lambda - \overline{a}'}{\lambda}- D_{11}^2 a(y_x,t,x) y'_x \right) y_x p^i_x \right)_x \right|^2 dxdt+ \iint_Q \rho^2 \left|\left(\left(\frac{a_\lambda - \overline{a}}{\lambda}-\overline{a}' y'_x \right)p^i_x \right)_x \right|^2 dxdt \right.\\
&+ \iint_Q \rho^2 |((a'_\lambda-\overline{a}')y_x {p^i}'_x)_x|^2 dxdt + \iint_Q \rho^2 |((a'_\lambda-\overline{a}')y'_x p^i_x)_x|^2 |{p^i}'|^2 dxdt +  \iint_Q \rho^2 |((a'_\lambda y_x {p^i}'_x)_x|^2 dxdt\\
&+ \iint_Q \rho^2 \left|\frac{F'_{\lambda,1}-\overline{F}'_1}{\lambda} - D_{11}^2 F(y,y_x) y' p - D_{12}^2 F(y,y_x) y'_x \right|^2 |p^i|^2 dxdt \\
& + \iint_Q \rho^2 \left|\frac{(F'_{\lambda,2}-\overline{F}'_2)_x}{\lambda} - (D_{21}^2 F(y,y_x) y' p^i)_x - (D_{22}^2 F(y,y_x) y'_x p^i)_x \right|^2 dxdt \\
&\left. + \iint_Q \rho^2 |F'_{\lambda,1} - \overline{F}'_1|^2 |{p^i}'|^2 dxdt + \iint_Q \rho_3^2 |((F'_{\lambda,2} - \overline{F}'_2) {p^i}')_x|^2 dxdt \right)\\
\end{align*}
Using Observation $\ref{obs1ch5}$  and Lebesgue's Theorem, we find that
$$\frac{1}{\lambda}[\mathcal{A}_i((y,p^1,p^2,f)+\lambda(y',{p^1}',{p^2}',f'))-\mathcal{A}_i(y,p^1,p^2,f)] \rightarrow D\mathcal{A}_i(y',{p^1}',{p^2}',f').$$

Then $\mathcal{A}$ is $G-$differentiable at any $(y,p^1,p^2,f) \in Y$, with a $G-$derivative 
$$\mathcal{A}'(y,p^1,p^2,f)=D\mathcal{A}$$
Now, we shall prove that the mapping $(y,p^1,p^2,f) \mapsto \mathcal{A}'(y,p^1,p^2,f)$ is continuous from $Y$ into $\mathcal{L}(Y;Z)$. As a consequence, in view of classical results, we will have that $\mathcal{A}$ is not only $G-$differentiable but also $F-$differentiable and $C^1$.\\
Thus, let us assume that $(y^n,p^{1,n},p^{2,n},f^n) \rightarrow (y,p^1,p^2,f)$ in $Y$ and let us check that
\begin{align}
\label{eq40ch5}
\|(D\mathcal{A}(y^n,p^{1,n},p^{2,n},f^n) - D\mathcal{A}(y,p^1,p^2,f))(y',{p^1}',{p^2}',f')\|_Y^2 \leq \epsilon_n \|(y',{p^1}',{p^2}',f')\|_Y^2
\end{align}
for all $(y',{p^1}',{p^2}',f') \in Y$, for some $\epsilon_n \rightarrow 0$.\\
The following holds, using the Observation $\ref{obs1ch5}$  and Lebesgue's Theorem
\begin{align*}
\|(D&\mathcal{A}_1 (y^n,p^{1,n},p^{2,n},f^n) - D\mathcal{A}_1 (y,p^1,p^2,f))(y',{p^1}',{p^2}',f')\|_{L^2(\rho^2;Q)}^2\\
&\leq C \left( \iint_Q \rho^2 |(\overline{a}'_n y'_x y_{n,x})_x - (\overline{a}' y'_x y_x)_x + (\overline{a}'_n y'_x)_x - (\overline{a}' y'_x)_x|^2 dxdt \right.\\
&+ \iint_Q \rho^2 |D_1 F(y^n,y^n_x) - D_1 F(y,y_x)|^2 |y'|^2 dxdt\\
&\left.  + \iint_Q \rho^2 |D_2 F(y^n,y^n_x) - D_2 F(y,y_x)|^2 |y'_x|^2 dxdt \right)\\
&\leq C \left( \iint_Q \rho^2 |((\overline{a}'_n - \overline{a}') y'_x y_x)_x|^2 dxdt + \iint_Q \rho^2 |(\overline{a}'_n y'_x (y_{n,x} - y_x))_x|^2 dxdt \right.\\
&+ \left. \iint_Q \rho^2 |((\overline{a}'_n - \overline{a}') y'_x)_x|^2 dxdt + \iint_Q \rho^2 (|y^n - y|^2 + |y^n_x - y_x|^2) (|y'|^2 + |y'_x|^2) dxdt \right)\\
& \leq \epsilon_{1,n} \|(y',{p^1}',{p^2}',f')\|_Y^2
\end{align*}
where
\begin{align*}
\epsilon_{1,n} &:= C (1+ \|(y^n,p^{1,n},p^{2,n},f^n)\|_Y^2 + \|(y,p^1,p^2,f)\|_Y^2) \|(y^n,p^{1,n},p^{2,n},f^n)-(y,p^1,p^2,f)\|_Y^2  
\end{align*}
For the other component, similar arguments lead to the same conclusion.
\begin{align*}
\|(D &\mathcal{A}_{1,t}(y^n,p^{1,n},p^{2,n},f^n) - D\mathcal{A}_{1,t} (y,p^1,p^2,f))(y',{p^1}',{p^2}',f')\|_{L^2(\rho_3^2;Q)}^2\\
&\leq C \left( \iint_Q \rho_3^2 |((\overline{a}'_n - \overline{a}') y'_x y_x)_{xt}|^2 dxdt + \iint_Q \rho_3^2 |(\overline{a}'_n y'_x (y_{n,x} -  y_x))_{xt}|^2 dxdt \right.\\
&+ \iint_Q \rho_3^2 |((\overline{a}'_n - \overline{a}') y'_x)_{xt}|^2 dxdt \\
&+ \iint_Q \rho_3^2 |D_{11}^2 F(y^n,y^n_x) y^n_t - D_{11}^2 F(y,y_x) y_t|^2 |y'|^2 dxdt\\
&+ \iint_Q \rho_3^2 |D_{12}^2 F(y^n,y^n_x) y^n_{xt}-D_{12}^2 F(y,y_x) y_{xt}|^2 |y'|^2 dxdt\\
&+ \iint_Q \rho_3^2 |D_{21}^2 F(y^n,y^n_x) y^n_{t} - D_{21}^2 F(y,y_x)) y_{t}|^2 |y'_x|^2 dxdt\\
&+ \iint_Q \rho_3^2 |D_{22}^2 F(y^n,y^n_x) y^n_{xt} -D_{22}^2 F(y,y_x) y_{xt}|^2 |y'_x|^2 dxdt\\
& \left. + \iint_Q \rho_3^2 |F'_{n,1} -\overline{F}'_1|^2 |y'_t|^2 dxdt + \iint_Q \rho_3^2 |F'_{n,2} -\overline{F}'_2|^2 |y'_{xt}|^2 dxdt \right)\\
& \leq \epsilon_{2,n} \|(y',{p^1}',{p^2}',f')\|_Y^2
\end{align*}
where\ $\epsilon_{2,n} \to 0$ as $n \to +\infty$. \\
Similarly
\begin{align*}
\|(D &\mathcal{A}_1(y^n,p^{1,n},p^{2,n},f^n) - D\mathcal{A}_1 (y,p^1,p^2,f))(y',{p^1}',{p^2}',f')(0)\|_{H^1_0(I)}^2\\
&\leq C \int_I \Bigl| \Bigl[ \Bigl(D_1 a(y_x^n(0),t,0)y_x^n(0)-D_1 a(y_x(0),t,0)y_x(0) \Bigr) y'_x(0) \Bigr]_{xx}\Bigr|^2 dx \\
&+ \int_I \Bigl| \Bigl[ \Bigl(a(y_x^n(0),t,0)-a(y_x(0),t,0) \Bigr)y'_x(0) \Bigr]_{xx} \Bigr|^2 dx\\
&+ \int_I \Bigl|[F'_{n,1}(0) {y^n}'(0) - \overline{F}'_1(0) y'(0)]_x \Bigr|^2 dx + \int_I \Bigl|[F'_{n,2}(0) {y^n}'_x(0) - \overline{F}'_2(0) y'_x(0)]_x \Bigr|^2 dx\\
& \leq \epsilon_{3,n} \|(y',{p^1}',{p^2}',f')\|_Y^2
\end{align*}
where\ $\epsilon_{3,n} \to 0$ as $n \to +\infty$. \\
And 
\begin{align*}
\|(D\mathcal{A}_i(y^n,p^{1,n},p^{2,n},f^n) &- D\mathcal{A}_i (y,p^1,p^2,f))(y',{p^1}',{p^2}',f')\|_{L^2(\rho^2;Q)}^2 \\
&\leq C \epsilon_{i+2,n} \|(y',{p^1}',{p^2}',f')\|_Y^2
\end{align*}
where\ $\epsilon_{i+2,n} \to 0$ as $n \to +\infty$.\\
This show that $(\ref{eq40ch5})$ is satisfied.
\end{proof}

\begin{lemma}
\label{lem3ch5}
Let $\mathcal{A}$ be the mapping defined by $(\ref{eq39ch5})$. Then $\mathcal{A}'(0,0,0,0)$ is onto. 
\end{lemma}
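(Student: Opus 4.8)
The plan is to recognize $\mathcal{A}'(0,0,0,0)$ as exactly the linear optimality operator associated to the linearized system $(\ref{eq18ch5})$, and then to read off surjectivity directly from the null-controllability result of Proposition $\ref{prop3ch5}$ together with the weighted regularity supplied by Propositions $\ref{prop5ch5}$ and $\ref{prop6ch5}$.

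First I would evaluate the $G$-derivative computed in Lemma $\ref{lem2ch5}$ at the origin. Setting $y=0$, $p^1=p^2=0$, $f=0$ (so that $y_x=0$ and $p^i_x=0$) annihilates every term carrying a factor of $y_x$, $p^i_x$, or a second-order derivative of $a$ or $F$; the surviving operators are precisely those of $(\ref{eq18ch5})$. Concretely, for $(y',{p^1}',{p^2}',f') \in Y$,
\[
\mathcal{A}'(0,0,0,0)(y',{p^1}',{p^2}',f') = (\mathcal{L}_1,\ \mathcal{L}_2,\ \mathcal{L}_3,\ y'(0)),
\]
where $\mathcal{L}_1 = y'_t - (a(0,t,x) y'_x)_x + D_1F(0,0)y' + D_2F(0,0)y'_x - f'1_{\mathcal{O}} + \frac{1}{\mu_1}{p^1}'1_{\mathcal{O}_1} + \frac{1}{\mu_2}{p^2}'1_{\mathcal{O}_2}$ and $\mathcal{L}_{i+1} = -{p^i}'_t - (a(0,t,x){p^i}'_x)_x + D_1F(0,0){p^i}' - D_2F(0,0){p^i}'_x - \alpha_i y'1_{\mathcal{O}_{i,d}}$. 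Thus solving $\mathcal{A}'(0,0,0,0)(y',{p^1}',{p^2}',f') = (G,G_1,G_2,y_0)$ is nothing other than solving the linearized optimality system $(\ref{eq18ch5})$ with data $(G,G_1,G_2,y_0)$.

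Next I would observe that any $(G,G_1,G_2,y_0) \in Z$ meets the hypotheses of Proposition $\ref{prop3ch5}$: the definition of $Z$ requires exactly $\rho G,\ \rho_3 G_t,\ \rho G_i \in L^2(Q)$, $G(0)\in H^1_0(I)$, and $y_0 \in H^3(I)\cap H^1_0(I)$. Proposition $\ref{prop3ch5}$ then furnishes a control--state $(y,p^1,p^2,f)$ solving $(\ref{eq18ch5})$ and satisfying $(\ref{eq31'ch5})$--$(\ref{eq31ch5})$; in particular $\rho_0 y,\ \rho_0 p^i \in L^2(Q)$ and $\rho_1 f,\ \rho_3 f_t \in L^2(\mathcal{O}\times(0,T))$. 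These give the state and control contributions to $\|\cdot\|_Y$ at once, with a bound by $\|(G,G_1,G_2,y_0)\|_Z$.

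Finally I would verify that this $(y,p^1,p^2,f)$ actually belongs to $Y$, i.e.\ that the three PDE-residual terms entering $\|\cdot\|_Y$ are finite. By construction the residuals of $(\ref{eq18ch5})_1$ and $(\ref{eq18ch5})_2$ equal $G$ and $G_i$, so the corresponding residual norms are just $\|\rho G\|_{L^2(Q)}$ and $\|\rho G_i\|_{L^2(Q)}$; differentiating $(\ref{eq18ch5})_1$ in $t$ shows the remaining residual equals $\rho_3(G_t - \frac{1}{\mu_1}p^1_t 1_{\mathcal{O}_1} - \frac{1}{\mu_2}p^2_t 1_{\mathcal{O}_2})$, which is controlled by $\|\rho_3 G_t\|_{L^2(Q)}$ together with the bound $\iint_Q \rho_3^2|p^i_t|^2\,dxdt < \infty$ from Proposition $\ref{prop5ch5}$. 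Combined with Proposition $\ref{prop6ch5}$, bounding the remaining higher-order weighted norms (cf.\ Observation $\ref{obs1ch5}$), this yields $(y,p^1,p^2,f)\in Y$ with $\|(y,p^1,p^2,f)\|_Y \le C\,\|(G,G_1,G_2,y_0)\|_Z$. Since every element of $Z$ therefore admits a preimage, $\mathcal{A}'(0,0,0,0)$ is onto. The only genuine work lies in this last membership check; it is precisely what the weighted estimates of Propositions $\ref{prop5ch5}$ and $\ref{prop6ch5}$ were designed to provide, so once those are invoked nothing further is needed beyond the identification of the derivative with the linearized operator.
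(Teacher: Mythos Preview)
Your proposal is correct and follows essentially the same approach as the paper: identify $\mathcal{A}'(0,0,0,0)$ with the linearized optimality operator, invoke Proposition~\ref{prop3ch5} to produce a preimage $(y,p^1,p^2,f)$ for any given $(G,G_1,G_2,y_0)\in Z$, and conclude that this preimage lies in $Y$. The paper's own proof is far terser---it simply asserts $(y,p^1,p^2,f)\in Y$ without spelling out the residual check---whereas you explicitly verify the third $Y$-norm term via $\rho_3 p^i_t\in L^2(Q)$ from Proposition~\ref{prop5ch5}; this extra care is appropriate and fills in what the paper leaves implicit.
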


\begin{proof}
Let us fix $(G,{G}_1,{G}_2,y_0) \in Z$. From Proposition $\ref{prop3ch5}$ we know that there exists $(y,p^1,p^2,f)$ satisfying $(\ref{eq31'ch5})$, $(\ref{eq31ch5})$ and $(\ref{eq4ch5})$. Consequently, $(y,p^1,p^2,f) \in Y$ and
\begin{align*}
\mathcal{A}'(0,0,0,0)(y,p^1,p^2,f)=\ &(y_t- (a(0,t,x)y_{x})_x + D_1F(0,0)y + D_1F(0,0)y_x - f{1}_{\mathcal{O}} + \frac{1}{\mu_1} p^1 {1}_{\mathcal{O}_1} + \frac{1}{\mu_2} p^2 {1}_{\mathcal{O}_2},\\
&-p^1_t - (a(0,t,x)p^1_{x})_x + D_1 F(0,0) p^1 - D_2 F(0,0) p^1_x -\alpha_1 y {1}_{\mathcal{O}_{1,d}},\\
&-p^2_t - (a(0,t,x)p^2_{x})_x + D_1 F(0,0) p^2 - D_2 F(0,0) p^2_x -\alpha_2 y {1}_{\mathcal{O}_{2,d}},\\
&y(0)) =(G,{G}_1,{G}_2,y_0)
\end{align*}
This end the proof.
\end{proof}

To conclude the proof, we will use the important result
\begin{theorem}[Inverse Function Theorem]
\label{t2prelim}
Let $Y$ and $Z$ be Banach spaces and let $\mathcal{A}: B_r(0) \subset Y \to Z$ be a $C^1$ mapping. Let us assume that the derivative $\mathcal{A}'(0): Y \to Z$ is onto and let us set $\xi_0=\mathcal{A}(0)$. Then there exist $\epsilon>0$, a mapping $W: B_\epsilon(\xi_0)\subset Z \to Y$ and a constant $K>0$ satisfying
$$W(z) \in B_r(0) \ \text{and} \ \ \mathcal{A}(W(z))=z,\ \ \ \forall z \in B_\epsilon(\xi_0),$$
$$\|W(z)\|_Y\leq K\|z-\xi_0\|_Z,\ \ \ \forall z \in B_\epsilon(\xi_0).$$  
\end{theorem}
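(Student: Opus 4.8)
The plan is to obtain this as an instance of the Lyusternik--Graves surjective mapping theorem, constructing $W$ by a modified Newton iteration in which every linear correction is solved with the single fixed surjective operator $\Lambda := \mathcal{A}'(0)$. Set $\xi_0 = \mathcal{A}(0)$. First I would record two ingredients. Since $\Lambda \in \mathcal{L}(Y;Z)$ is onto, Banach's open mapping theorem gives a constant $M>0$ and, for each $w \in Z$, a chosen preimage $Sw \in Y$ with $\Lambda(Sw) = w$ and $\|Sw\|_Y \leq M\|w\|_Z$ (a bounded, not necessarily linear, right-inverse selection). Since $\mathcal{A}$ is $C^1$, continuity of $y \mapsto \mathcal{A}'(y)$ at $0$ yields $r' \in (0,r]$ with $\|\mathcal{A}'(y) - \Lambda\|_{\mathcal{L}(Y;Z)} \leq \eta$ whenever $\|y\|_Y \leq r'$, where $\eta$ is fixed once and for all so that $\eta M \leq \tfrac12$.

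Then I would fix $z \in Z$ with $\|z - \xi_0\|_Z$ small and define $y_0 := 0$ and $y_{n+1} := y_n + S(z - \mathcal{A}(y_n))$. Using the mean value identity $\mathcal{A}(y_{n+1}) - \mathcal{A}(y_n) = \int_0^1 \mathcal{A}'(y_n + t(y_{n+1}-y_n))(y_{n+1}-y_n)\,dt$ together with $\Lambda(y_{n+1}-y_n) = z - \mathcal{A}(y_n)$, the residual satisfies
$$z - \mathcal{A}(y_{n+1}) = \int_0^1 \bigl[\Lambda - \mathcal{A}'(y_n + t(y_{n+1}-y_n))\bigr](y_{n+1}-y_n)\,dt,$$
so, as long as the relevant iterates lie in $B_{r'}(0)$, one gets $\|z - \mathcal{A}(y_{n+1})\|_Z \leq \eta\,\|y_{n+1}-y_n\|_Y \leq \eta M\,\|z-\mathcal{A}(y_n)\|_Z \leq \tfrac12\|z-\mathcal{A}(y_n)\|_Z$. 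Hence the residuals decay geometrically, $\|z-\mathcal{A}(y_n)\|_Z \leq 2^{-n}\|z-\xi_0\|_Z$, and the increments obey $\|y_{n+1}-y_n\|_Y \leq M\,2^{-n}\|z-\xi_0\|_Z$.

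To close the argument I would choose $\epsilon>0$ with $2M\epsilon \leq r'$; then by induction every $y_n$ lies in $B_{r'}(0)$, since $\|y_n\|_Y \leq \sum_{k<n}\|y_{k+1}-y_k\|_Y \leq 2M\|z-\xi_0\|_Z \leq 2M\epsilon \leq r'$, and convexity of the ball makes the segments $[y_n,y_{n+1}]$ admissible, so the residual estimate is self-consistent. The partial sums form a Cauchy sequence, hence $y_n \to W(z)$ in $Y$ with $\|W(z)\|_Y \leq 2M\|z-\xi_0\|_Z$, while continuity of $\mathcal{A}$ and $\|z-\mathcal{A}(y_n)\|_Z \to 0$ force $\mathcal{A}(W(z)) = z$. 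Taking $K := 2M$ and shrinking $\epsilon$ if needed so that $2M\epsilon \leq r$ yields $W(z) \in B_r(0)$, which is exactly the asserted conclusion.

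The only delicate point is the first ingredient: the existence of the bounded right-inverse selection $S$. Surjectivity of $\Lambda$ gives, via the open mapping theorem, the inclusion $\Lambda\,B_Y(0,1) \supset B_Z(0,1/M)$ for some $M>0$, from which one extracts for each $w\in Z$ a preimage $Sw$ with $\|Sw\|_Y \leq M\|w\|_Z$; notably neither linearity nor continuity of $S$ is needed, only this uniform bound, and it is precisely this bound that drives the geometric contraction of the residuals. Everything else is the routine Newton-iteration bookkeeping of keeping the iterates inside $B_{r'}(0)$.
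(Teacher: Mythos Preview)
Your argument is correct: this is the standard Lyusternik--Graves proof via a modified Newton iteration with the fixed linearization $\Lambda=\mathcal{A}'(0)$, and every step (the bounded right-inverse selection from the open mapping theorem, the geometric decay of residuals, the induction keeping iterates in $B_{r'}(0)$, and the final bound $K=2M$) is sound.

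The paper, however, does not prove this result at all; its proof reads in full ``See \cite{ALEKSEEV}.'' So your write-up is not a different route so much as a self-contained substitute for the cited reference. What you have written is in fact essentially the argument given in Alekseev--Tikhomirov--Fomin, so there is no genuine divergence in method; you have simply unpacked what the paper defers to the literature.
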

\begin{proof}
See \cite{ALEKSEEV}.
\end{proof} 

In accordance with Lemmas $\ref{lem1ch5}$, $\ref{lem2ch5}$ and $\ref{lem3ch5}$, we can apply Theorem $\ref{t2prelim}$ and deduce that, there exists $\epsilon>0$, a mapping $W: B_\epsilon(0)\subset Z \to Y$ such that 
$$W(w) \in B_r(0) \ \ \text{and} \ \ \mathcal{A}(W(w))=w\ ,\ \ \ \forall w \in B_\epsilon(0)$$
Taking $(0,-\alpha_1 y_{1,d} {1}_{\mathcal{O}_{1,d}} ,-\alpha_2 y_{2,d} {1}_{\mathcal{O}_{2,d}} ,y_0) \in B_\epsilon(0)$ and 
$$(y,p^1,p^2,f)=W(0,-\alpha_1 y_{1,d} {1}_{\mathcal{O}_{1,d}} ,-\alpha_2 y_{2,d} {1}_{\mathcal{O}_{2,d}} ,y_0) \in Y,$$
we have 
$$\mathcal{A}((y,p^1,p^2,f))=(0,-\alpha_1 y_{1,d} {1}_{\mathcal{O}_{1,d}} ,-\alpha_2 y_{2,d} {1}_{\mathcal{O}_{2,d}} ,y_0)$$
thus, we prove that $(\ref{EC7ch5})$ is null locally controllable at time $T>0$.

\subsection{Nash equilibrium for $(\ref{EC1ch5})$}
Finally, we will prove the Theorem \ref{teo2ch5}, the technique of the proof is based in \cite{Cara1}.\\

Let $f \in L^2(\mathcal{O}\times (0,T))$ be given and let $(v^1,v^2)$ be the associated Nash quasi-equilibrium. For any $s \in \mathbb{R}$ and $w^1,w^2 \in L^2(\mathcal{O}_1 \times (0,T)) $, we have
\begin{align}
\label{eq41ch5}
\langle D_1 J_1(f;v^1+sw^1,v^2),w^2  \rangle &= \alpha_1 \iint_{\mathcal{O}_{1,d} \times (0,T)} (y^s-y_{1,d}) z^s  dxdt \nonumber\\
&+ \mu_1 \iint_{\mathcal{O}_1 \times (0,T)} (v^1 + s w^1)w^2 dxdt
\end{align}  
where
\begin{equation}
\label{EC9ch5}
\left\{
\begin{array}{rl}
&y^s_t - (a(y_x^s,t,x) y^s_x)_x + F(y^s,y_x^s) = f {1}_\mathcal{O} + (v^1+sw^1) {1}_{\mathcal{O}_1} + v^2 1_{\mathcal{O}_1} \ \text{in} \ Q,\\
&y^s(0,t) = y^s(L,t) = 0\ \ \text{ in } \ (0,T),\\
&y^s(0) = y^{0}\ \ \text{in }\ \ I,
\end{array}
\right.
\end{equation}
$z^s$ the derivative of the state $y^s$ with respect to $v^1$ in the direction $w^2$, i. e. the solution to
\begin{equation}
\label{EC10ch5}
\left\{
\begin{array}{rl}
&z^s_t - ((D_1 a(y_x^s,t,x)y_x^s + a(y_x^s,t,x)) z^s_x)_{x} + D_1F(y^s,y^s_x) z^s + D_2 F(y^s,y^s_x) z_x^s = w^2 {1}_{\mathcal{O}_1} \ \text{in} \ Q,\\
&z^s(0,t)=z^s(L,t) = 0\ \ \text{ in } \ (0,T),\\
&z^s(0) = 0\ \ \text{ in }\ \ I.
\end{array}
\right.
\end{equation}
with $y=y^s|_{s=0}$ and $z=z^s|_{s=0}$, then
\begin{align}
\label{eq42ch5}
\langle DJ_1(f;v^1,v^2),w^2  \rangle = \alpha_1 \iint_{{\mathcal{O}}_{1,d} \times (0,T)} (y-y_{1,d})z\ dxdt + \mu_1 \iint_{\mathcal{O}_1 \times (0,T)} v^1 w^2 dxdt
\end{align}
From $(\ref{eq41ch5})$ and $(\ref{eq42ch5})$ we have
\begin{align}
\label{eq43ch5}
\nonumber
\langle D_1 J_1(f;v^1+sw^1,v^2)- D_1 J_1(f;v^1,v^2),w^2  \rangle &= \alpha_1 \iint_{\mathcal{O}_{1,d} \times (0,T)} (y^s-y_{1,d}) z^s  dxdt \nonumber\\
&- \alpha_1 \iint_{\mathcal{O}_{1,d} \times (0,T)} (y-y_{1,d})z\ dxdt \nonumber\\ 
&+ s \mu_1 \iint_{\mathcal{O}_1 \times (0,T)} w^1 w^2 dxdt.
\end{align}
Let us introduce the adjoint of $(\ref{EC10ch5})$
\begin{equation}
\label{EC11ch5}
\left\{
\begin{array}{rl}
&-\phi^s_t - ((D_1 a(y^s_x,t,x)y_x^s+ a(y_x^s,t,x)) \phi^s_x)_{x} + D_1 F(y^s,y^s_x) \phi^s - (D_2 F(y^s,y^s_x) \phi^2)_x = \alpha_1 (y^s - y_{1,d}) {1}_{\mathcal{O}_{1,d}} \ \text{in} \ Q,\\
&\phi^s(0,t)=\phi^s(L,t) = 0\ \ \text{ in } \ (0,T),\\
&\phi^s(T) = 0\ \ \text{ in }\ \ I.
\end{array}
\right.
\end{equation}
Multiplying $(\ref{EC10ch5})_1$ by $\phi^s$ in $Q$, integrating by parts and replacing $(\ref{EC11ch5})$, we obtain
\begin{align}
\label{eq44ch5}
\nonumber
\iint_Q (z^s_t - ((D_1 a(y^s_x,t,x)y_x^s+ a(y_x^s,t,x)) z^s_x)_{x} &+ D_1F(y^s,y^s_x) z^s + D_2 F(y^s,y^s_x) z_x^s) \phi^s dxdt = \iint_Q w^2 {1}_{\mathcal{O}_1} \phi^s dxdt\nonumber\\
\iint_Q (-\phi^s_t - ((D_1 a(y_x^s,t,x)y_x^s + a(y_x^s,t,x)) \phi^s_x)_{x} &+ D_1 F(y^s,y^s_x) \phi^s - (D_2 F(y^s,y^s_x) \phi^2)_x) z^s dxdt = \iint_Q w^2 \phi^s {1}_{\mathcal{O}_1}  dxdt \nonumber\\
\iint_Q \alpha_1 (y^s - y_{1,d})z^s {1}_{\mathcal{O}_{1,d}} dxdt &= \iint_Q w^2 \phi^s {1}_{\mathcal{O}_1}  dxdt
\end{align}
From $(\ref{eq43ch5})$ and $(\ref{eq44ch5})$ , we have
\begin{align}
\label{eq45ch5}
\langle D_1J_1(f;v^1+sw^1,v^2)- D_1 J_1(f;v^1,v^2),w^2  \rangle &=  \iint_{\mathcal{O}_{1} \times (0,T)} (\phi^s-\phi) w^2  dxdt + s \mu_1 \iint_{\mathcal{O}_1 \times (0,T)} w^1 w^2 dxdt.
\end{align}
Notice that
\begin{align*}
-(\phi^s - \phi)_t &- \Bigl[ \Bigl(D_1 a(y_x^s,t,x)y_x^s + a(y_x^s,t,x) \Bigr)(\phi^s_{x}-\phi_{x}) \Bigr]_x\\
& - \Bigl[ \Bigl((D_1 a(y_x^s,t,x)- D_1 a(y_x,t,x))y_x^s + D_1 a(y_x,t,x)(y^s - y)_x + a(y^s_x,t,x) - a(y_x,t,x) \Bigr)\phi_{x} \Bigr]_x\\
&+ [D_1 F(y^s,y^s_x) - D_1 F(y,y^s_x)] \phi^s + [D_1 F(y,y^s_x)-D_1 F(y,y_x)]\phi^s\\
&+ D_1 F(y,y_x) (\phi^s - \phi) - ([D_2 F(y^s,y^s_x) - D_2 F(y,y^s_x)] \phi^s)_x\\
&- ([D_2 F(y,y^s_x) - D_2 F(y,y_x)] \phi^s)_x - (D_2F(y,y_x)[\phi^s-\phi])_x\\
&=\alpha_1 (y^s-y) {1}_{\mathcal{O}_{1,d}},
\intertext{and}
(y^s-y)_t &- \Bigl[ (a(y_x^s,t,x)-a(y_x,t,x)) y_x^s + a(y_x,t,x)(y_x^s-y_x) \Bigr]_x + [F(y^s,y^s_x) - F(y,y^s_x)] \\
&+ [F(y,y^s_x) - F(y,y_x)] = s w^1 {1}_{\mathcal{O}_1}.
\end{align*}
Consequently, the limits
$$\eta = \lim_{s \to 0} \frac{1}{s}(\phi^s-\phi)\ \ \ \text{and}\ \ \ h=\lim_{s \to 0} \frac{1}{s}(y^s-y)$$
exist and satisfy
\begin{equation}
\label{EC12ch5}
\left\{
\begin{array}{rl}
&-\eta_t - \Bigl[ \Bigl(D_1 a(y_x,t,x)y_x + a(y_x,t,x) \Bigr) \eta_{x} \Bigr]_x - \Bigl[ \Bigl(D_{11}^2 a(y_x,t,x) y_x h_x + 2 D_1 a(y_x,t,x)h_x \Bigr)\phi_{x} \Bigr]_x \\
&+D_{11}^2 F(y,y_x) \phi h + D_{12}^2 F(y,y_x) \phi h_x + D_1 F(y,y_x) \eta\\
&-(D_{21}^2 F(y,y_x) \phi h)_x - (D_{22}^2 F(y,y_x) \phi h_x)_x - (D_2 F(y,y_x) \eta)_x\\
&= \alpha h {1}_{\mathcal{O}_{1,d}} \ \text{in} \ Q,\\
&h_t - \Bigl[ \Bigl(D_1 a(y_x,t,x) y_x + a(y_x,t,x) \Bigr) h_{x} \Bigr]_x + D_1F(y,y_x) h + D_2F(y,y_x) h_x = w^1 {1}_{\mathcal{O}_1}\ \text{in} \ Q,\\
&\eta(0,t)=\eta(L,t) = 0,\ \ h(0,t)=h(L,t)=0\ \ \text{ in } \ (0,T),\\
&\eta(T) = 0,\ \ h(0)=0\ \ \text{ in } \ I.
\end{array}
\right.
\end{equation}
Thus, from $(\ref{eq45ch5})$ and $(\ref{EC12ch5})$, we deduce that
\begin{align*}
\langle D^2_1 J_1(f;v^1,v^2),(w^1,w^2)  \rangle &=  \iint_{\mathcal{O}_1 \times (0,T)} \eta w^2  dxdt +  \mu_1 \iint_{\mathcal{O}_1 \times (0,T)} w^1 w^2 dxdt.
\end{align*}
In particular, for $w^2=w^1$, one has
\begin{align}
\label{eq46ch5}
\langle D^2_1 J_1(f;v^1,v^2),(w^1,w^1)  \rangle &=  \iint_{\mathcal{O}_1 \times (0,T)} \eta w^1  dxdt +  \mu_1 \iint_{\mathcal{O}_1 \times (0,T)} |w^1|^2 dxdt.
\end{align}
Let us show that, for some $C$ only depending on $I$, $\mathcal{O}$, $\mathcal{O}_i$, $T$, $\mathcal{O}_{i,d}$, $\alpha_1$, we have
\begin{align}
\label{eq47ch5}
\left| \iint_{\mathcal{O}_1 \times (0,T)} \eta w^1  dxdt\right| \leq C (1+\|y_0\|+\|f\|_{L^2(\mathcal{O} \times (0,T))}) \|w^1\|^2_{L^2(\mathcal{O}_1 \times (0,T))}
\end{align}
We also get the following
\begin{align}
\label{eq49ch5}
\nonumber
\iint_{\mathcal{O}_1 \times (0,T)} &\eta w^1  dxdt\nonumber\\
= \iint_Q &(h_t - [(D_1 a(y_x,t,x)y_x+a(y_x,t,x))h_{x}]_x + D_1 F(y,y_x) h + D_2 F(y,y_x) h_x) \eta dxdt \nonumber\\
= \iint_Q &h(-\eta_t - [(D_1 a(y_x,t,x)y_x + a(y_x,t,x))\eta_{x}]_x + D_1 F(y,y_x) \eta + (D_2  F(y,y_x) \eta)_x dxdt \nonumber\\
= \iint_Q &h([(D_{11}^2 a(y_x,t,x) y_x h_x + 2 D_1 a(y_x,t,x)h_x )\phi_{x}]_x - D_{11}^2 F(y,y_x) \phi h - D_{12}^2 F(y,y_x) \phi h_x \nonumber\\
&+ (D_{21}^2 F(y,y_x) \phi h)_x + (D_{22}^2 F(y,y_x) \phi h_x)_x + \alpha h {1}_{\mathcal{O}_{1,d}} ) dxdt \nonumber\\
= \iint_Q &(D_{11}^2 a(y_x,t,x) |h_x|^2 y_x \phi_x + 2 D_1 a(y_x,t,x) |h_x|^2 \phi_x - D_{11}^2 F(y,y_x) \phi |h|^2 - D_{12}^2 F(y,y_x) \phi h_x h \nonumber\\
&- D_{21}^2 F(y,y_x) \phi h h_x + D_{22}^2 F(y,y_x) \phi |h_x|^2 + \alpha |h|^2 {1}_{\mathcal{O}_{1,d}}) dxdt \nonumber\\
\leq C\ & \left( \int_0^T \|h_{xx}(t)\|^2 \|y_x(t)\| \|\phi_x(t)\| dt + \int_0^T \|h_{xx}(t)\|\|h_x(t)\| \|\phi_x(t)\| dt \right. \nonumber\\
&\left. + \int_0^T \|h_x(t)\| \|h(t)\| \|\phi_x(t)\| + \iint_Q |h|^2 dxdt \right)
\end{align}
From $(\ref{EC11ch5})$ with $s=0$, using energy estimates, we have
\begin{align}
\label{eq50ch5}
\|\phi_{xx}\|^2_{L^2(Q)} + \|\phi_x\|^2_{L^\infty(0,T;L^2(I))} &\leq C (\|y\|^2_{L^2(Q)}+\|y_{1,d}\|^2_{L^2(\mathcal{O}_{1,d} \times (0,T))})\\
\label{eq51ch5}
\|h_{xx}\|^2_{L^2(Q)} + \|h_x\|^2_{L^\infty(0,T;L^2(I))} &\leq C \|w^1\|^2_{L^2(\mathcal{O}_1 \times (0,T))}
\end{align} 
as $(v^1,v^2)$ is the Nash quasi-equilibrium, then $y$ have the following regularity
\begin{align}
\label{eq52ch5}
\|y\|_{L^2(Q)}^2 \leq C (\underset{[0,T]}{\text{sup}}\|y_x(t)\|^2 + \|y_{xx}\|_{L^2(Q)}^2) \leq C (\|f\|_{L^2(\mathcal{O}\times (0,T))}^2 + \|y_0\|^2+ \sum_{i=1}^2 \frac{1}{\mu}\|\phi^i\|_{L^2(\mathcal{O}_i \times (0,T))}^2)
\end{align}
Using $(\ref{eq49ch5})$ - $(\ref{eq52ch5})$, we have
\begin{align*}
\left|\iint_{\mathcal{O}_1 \times (0,T)} \eta w^1  dxdt\right| 
\leq C\ &(\|\phi_{x}\|_{L^\infty(0,T;L^2)} \|y_x\|_{L^\infty(0,T;L^2)} + \|\phi_{x}\|_{L^\infty(0,T;L^2)}+ 1)\\
&\cdot(\|h_{xx}\|_{L^2} + \|h_{xx}\|_{L^2} \|h_x\|_{L^\infty(0,T;L^2)})\\
\leq C\ &(\|f\|_{L^2(\mathcal{O}\times (0,T))}^2+\|y_0\|^2+ \|f\|_{L^2(\mathcal{O}\times (0,T))}+\|y_0\|+1) \|w^1\|^2_{L^2(\mathcal{O}_1 \times (0,T))}
\end{align*}
This prove $(\ref{eq47ch5})$ in this case.\\
Taking into account $(\ref{eq46ch5})$ and $(\ref{eq47ch5})$, we see that
\begin{align*}
\langle D_1^2 J_1(f;v^1,v^2),(w^1,w^1)  \rangle &\geq \Bigl[\mu_1 - C (\|f\|_{L^2(\mathcal{O}\times (0,T))},\|y_0\|) \Bigr] \iint_{\mathcal{O}_1 \times (0,T)} |w^1|^2 dxdt.
\end{align*}
Note that the previous constant $C$ can be chosen independent of $\mu_1$ and $\mu_2$. It is clear that, for sufficiently large $\mu_1$ and $\mu_2$, the couple $(v^1,v^2)$ is a Nash equilibrium in the sense of Definition $\ref{def1ch5}$.

\section{Hierarchical Controllability with Trajectories}\label{s6}

If let us fix an uncontrolled trajectory of (\ref{EC1ch5}), that is, a sufficiently regular solution to the system
\begin{equation}
\label{trajectch5}
\left\{
\begin{array}{rl}
&\overline{y}_t -  (a(\overline{y}_x,t,x) \overline{y}_x)_x + F(\overline{y},\overline{y}_x) = 0 \ \text{in} \ Q,\\
&\overline{y}(0,t)=\overline{y}(L,t) = 0\ \ \text{ in } \ (0,T),\\
&\overline{y}(0) = \overline{y}_{0}\ \ \text{in } \ I.
\end{array}
\right.
\end{equation}
Once the Nash equilibrium (see Definition \ref{def1ch5}) has been identified and fixed for each $f$, we look for a control $\hat{f} \in L^2(\mathcal{O} \times (0,T))$ subject to the restriction of null controllability
\begin{equation*}
y(T)= \overline{y}(T) \ \ \text{in}\ I.
\end{equation*}
We will have analogous results to Theorem \ref{teo1ch5} and 
Theorem \ref{teo2ch5}.\\
To prove this, we denote $z:= y - \overline{y}$, $z_{i,d}:=y_{i,d}-\overline{y}$ and $z_0 :=y_0 - \overline{y}_0$, obtaining the following equivalent optimality system
\begin{equation}
\label{eq53ch5}
\left\{
\begin{array}{rl}
&z_t - \Bigl(a(z_x + \overline{y}_x,t,x) z_x \Bigr)_x - \Bigl(a(z_x + \overline{y}_x,t,x)-a(\overline{y}_x,t,x) \Bigr)_x + F(z + \overline{y},z_x + \overline{y}_x) - F(\overline{y},\overline{y}_x) \\
&= f 1_\mathcal{O} - \dfrac{1}{\mu_1} p^1 1_{\mathcal{O}_1} - \dfrac{1}{\mu_2} p^2 1_{\mathcal{O}_2} \ \text{in} \ Q,\\
&-p^i_t - \Bigl( \Bigl(D_1 a(z_x +\overline{y}_x,t,x)(z_x+\overline{y}_x) + a(z_x+\overline{y}_x,t,x) \Bigr) p^i_x \Bigr)_{x} + D_1 F(z+\overline{y},z_x+\overline{y}_x) p^i  \\
&- (D_2 F(z+ \overline{y},z_x+\overline{y}_x) p^i)_x = \alpha_i (z-z_{i,d}) 1_{\mathcal{O}_{i,d}} \ \text{in} \ Q,\\
&z(0,t) = z(L,t) = 0, \ \  p^i(0,t)=p^i(L,t)=0\ \ \text{ in } \ (0,T),\\
&z(0) = z_{0}, \ \  p^i(T)=0\ \ \text{in } \ I.
\end{array}
\right.
\end{equation}
Now, we will study the Null Controllability for the state $z$ in the system (\ref{eq53ch5}), this is
\begin{equation*}
z(T) = 0, \ \ \text{in}\ I.
\end{equation*}
Using the classical techniques, we must study the Null Controllability for the linearized system at $0$
\begin{equation}
\label{eq54ch5}
\left\{
\begin{array}{rl}
&z_t - \Bigl(\Bigl(D_1 a(\overline{y}_x,t,x)\overline{y}_x + a(\overline{y}_x,t,x) \Bigr) z_{x} \Bigr)_x + D_1 F(\overline{y},\overline{y}_x)z + D_2F(\overline{y},\overline{y}_x) z_x \\
&=  f 1_\mathcal{O} - \dfrac{1}{\mu_1} p^1 1_{\mathcal{O}_1} - \dfrac{1}{\mu_2} p^2 1_{\mathcal{O}_2} + G\ \text{in} \ Q,\\
&-p^i_t - \Bigl( \Bigl(D_1 a(\overline{y}_x,t,x) \overline{y}_x + a(\overline{y}_x,t,x) \Bigr) p^i_{x} \Bigr)_x + D_1 F(\overline{y},\overline{y}_x) p^i - (D_2F(\overline{y},\overline{y}_x) p^i)_x \\
&= \alpha_i z 1_{\mathcal{O}_{i,d}} + G_i\ \text{in} \ Q,\\
&z(0,t) = z(L,t) = 0, \ \  p^i(0,t)=p^i(L,t)=0\ \ \text{ in } \ (0,T),\\
&z(0) = z_{0},\ \ p^i(T)=0, \ \ \text{ in } \ I.
\end{array}
\right.
\end{equation}
In this case, the trajectory have an additional condition
\begin{equation}
	\label{traject-condition}
	\|\overline{y}_x\|_{L^\infty(I \times (0,T))} \leq \frac{a_0}{2M}
\end{equation}
where the constants $a_0$ and $M$ were defined in the Section \ref{s2}. \\
Then to use Carleman estimates, we consider the adjoint system for $(\ref{eq54ch5})$
\begin{equation}
\label{eq55ch5}
\left\{
\begin{array}{rl}
&-\varphi_t -  \Bigl( \Bigl(D_1 a(\overline{y}_x,t,x)\overline{y}_x + a(\overline{y}_x,t,x) \Bigr) \varphi_{x} \Bigr)_x + D_1 F(\overline{y},\overline{y}_x) \varphi - (D_2F(\overline{y},\overline{y}_x) \varphi)_x=  \alpha_1 \theta^1 1_{\mathcal{O}_{1,d}} + \alpha_2 \theta^2 1_{\mathcal{O}_{2,d}} + \mathcal{G} \ \text{in} \ Q,\\
&\gamma^i_t - \Bigl( \Bigl(D_1 a(\overline{y}_x,t,x) \overline{y}_x + (a(\overline{y}_x,t,x) \Bigr)  \gamma^i_{x} \Bigr)_x + D_1 F(\overline{y},\overline{y}_x) \gamma^i + D_2 F(\overline{y},\overline{y}_x) \gamma^i_x =  - \dfrac{1}{\mu_i} \varphi 1_{\mathcal{O}_i} + \mathcal{G}_i\ \text{in} \ Q,\\
&\varphi(0,t) = \varphi(L,t) = 0, \ \ \gamma^i(0,t)=\gamma^i(L,t) =0, \ \ \text{ in } \ (0,T),\\
&\varphi(T) = \varphi^{T},\ \ \gamma^i(0)=0, \ \ \text{ in } \ I.
\end{array}
\right.
\end{equation}
We conclude the proof making similar counts of the Section \ref{s4} and Section \ref{s5}. 

\section{Some additional comments and open questions}
\label{s7}
\begin{itemize}
	\item[i)] The hierarchic controllability for the system (\ref{EC1ch5}) in higher dimension is an open problem. The chose of the suitable Banach spaces to guarantee the Liusternik's Theorem is a difficult task. Furthermore, to guarantee the additional estimate we need strongly the fact of the dimension is 1, because we use the embedding $H^1(I) \hookrightarrow L^{\infty}(I)$ and this result in higher dimension is not true. A good advance in this direction would be to follow the ideas in \cite{XU}, that is to say, to prove the controllability using Kakutani Fixed Point Theorem.
	
	\item[ii)] Following the line of study for this paper, a coupled system with the same nonlinearity that the system (\ref{EC1ch5}) is a very interesting problem. In fact, in the system
	 \begin{equation*}
\left\{
\begin{array}{rl}
&y_{1,t} -  \nabla \cdot (a(\nabla y_1,t,x)\nabla y_1) + F_1(y_1,y_2,\nabla y_1,\nabla y_2) = f 1_\mathcal{O} + v^1 1_{\mathcal{O}_1} + v^2 1_{\mathcal{O}_2} \ \text{in} \ Q,\\
&y_{2,t} -  \nabla \cdot (a(\nabla y_2,t,x)\nabla y_2) + F_2(y_1,y_2,\nabla y_1,\nabla y_2) = 0 \ \text{in} \ Q,\\
&y(x,t) = 0\ \ \text{ in } \ \partial \Omega \times (0,T),\\
&y(0) = y_{0}\ \ \text{in } \ \Omega.
\end{array}
\right.
\end{equation*}
the hierarchic controllability is an open problem.

The complication is in the Observability inequality. A good possible solution would be change the functional of the Nash equilibrium, putting suitable weight functions in the follower's spaces, this is, we assume the functional
$$J_i(f;v^1,v^2) = \frac{\alpha_i}{2} \iint_{\mathcal{O} \times (0,T)} (|y_1-y_{1,d}|^2 + |y_2-y_{2,d}|^2)dxdt + \frac{\mu_i}{2} \iint_{\mathcal{O}_i} {\rho^*}^2 |v^i|^2 dxdt$$
where $\rho^* \geq e^{s\sigma_j/2}$ for $j=1,2$  (see \cite{Tere}).

	\item[iii)] We consider the degenerate parabolic system
	\begin{equation}
	\label{EC13ch5}
\left\{
\begin{array}{rl}
&y_t -  (b(y_x,t) x^{\alpha}  y_x)_x + F(y,y_x) = f 1_\mathcal{O} + v^1 1_{\mathcal{O}_1} + v^2 1_{\mathcal{O}_2},\\
&y(1,t) = 0\ \ \text{and} \begin{cases}
y(0,t) = 0\ \ &\text{for}\ 0 \leq \alpha <1,\\
(x^{\alpha} y_x)(0,t) = 0\ \ &\text{for}\ 1\leq \alpha <2,
\end{cases}\\
&y(0) = y_{0}.
\end{array}
\right.
\end{equation}
There are many papers about the controllability for the system linearized (see \cite{Canar1}, \cite{Canar2}), but the hierarchic controllability in (\ref{EC13ch5}) is an open problem. It is not possible applied similar techniques of this paper, because the function $b(s,t)x^\alpha$ does not satisfy the conditions 2 and 3 to the function $a(s,x,t)$ in Section~\ref{s2}. The study of this problem is a future work for us.

\end{itemize}

%%%%%%%%%%%%%%%%%%%%%%%%%%%%%%%%%%%%%%%%%%%%%%%%%%%%%%%%%%%%%%%%%%%%%%%%%%%%%%%%%%%%%%%%%%%%%%%%%%%%%%%%%%%%%%%%%%%%%%%%%%%%%%%%%%%%%%

% Bibliografía.
%-----------------------------------------------------------------

\end{document}